\documentclass[a4paper,11pt]{article}

\usepackage{amsmath,mathtools,amsthm,amssymb,amsfonts,color,comment}
\usepackage[numbers,sort&compress]{natbib}
\usepackage[colorlinks,linkcolor=blue,anchorcolor=blue,citecolor=green]{hyperref}
\usepackage{enumerate}
\numberwithin{equation}{subsection}

\newcommand{\mi}{(\mathbf{I-P})}
\allowdisplaybreaks[4]

\newtheorem{theorem}{Theorem}[section]

\newtheorem{lemma}{Lemma}[section]
\newtheorem{remark}{Remark}[section]

\newtheorem{corollary}{Corollary}[section]
\newtheorem{proposition}{Proposition}[section]
\usepackage{mathrsfs}
\makeatletter

\newcommand{\Rmnum}[1]{\expandafter\@slowromancap\romannumeral #1@}
\usepackage{authblk}
\usepackage[title]{appendix}
\allowdisplaybreaks[4]

\def\R{\mathbb{R}}

\def\v{\varepsilon}
\def\div{\mathrm{div}}

\def\l{\langle}
\def\r{\rangle}

\def\up{\textup}
\def\n{\nabla}
\def\p{\partial}

\def\a{\alpha}
\def\b{\beta}

\def\le{\lesssim}

\def\c{\cdot}

\def\E{\mathbb{E}}
\def\M{\sqrt{M_u}}
\def\d{\mathrm{d}}

\renewcommand{\div}{\mathrm{div}}
\newcommand{\pt}{\partial_{t}}
\newcommand{\px}{\partial_{x}}

\newcommand{\intt}[2]{\int_{0}^{t}{\, #1}\, \mathrm{d}{#2}}

\newcommand{\Ltwox}[1]{\Vert {#1} \Vert_{L^2_x}}
\newcommand{\Ltwov}[1]{\Vert {#1} \Vert_{L^2_v}}
\newcommand{\Ltwoxv}[1]{\Vert {#1} \Vert_{L^2_xL^2_v}}

\newcommand{\Htwox}[1]{\Vert{#1} \Vert_{H^2_x}}
\newcommand{\Hthreex}[1]{\Vert{#1} \Vert_{H^3_x}}

\newcommand{\dt}[1]{\frac{\mathrm{d}}{\mathrm{d}#1}}

\newcommand{\intx}[1]{\int {#1} \,\mathrm{d}{x}}
\newcommand{\intv}[1]{\int {#1} \,\mathrm{d}{v}}
\newcommand{\intxv}[1]{\int {#1} \,\mathrm{d}{x} \,\mathrm{d}{v}}

\newcommand{\Lx}[1]{\Vert {#1} \Vert_{L^2_{x}}}

\newcommand{\Hxx}[1]{\Vert{#1} \Vert_{H^2_{x}}}
\newcommand{\Hxxx}[1]{\Vert{#1} \Vert_{H^3_{x}}}
\newcommand{\Hxxxx}[1]{\Vert{#1} \Vert_{H^4_{x}}}
\newcommand{\Lxv}[1]{\Vert {#1} \Vert_{L^2_{x}L^2_{v}}}

\newcommand{\Lv}[1]{\Vert {#1} \Vert_{\nu}}
\newcommand{\Linf}[1]{\Vert {#1} \Vert_{L^\infty_{x}}}
\newcommand{\Hxv}[1]{\Vert{#1} \Vert_{H^1_{x}L^2_{v}}}
\newcommand{\Hxxv}[1]{\Vert{#1} \Vert_{H^2_{x}L^2_{v}}}
\newcommand{\Hxxxv}[1]{\Vert{#1} \Vert_{H^3_{x}L^2_{v}}}
\newcommand{\Hxxxxv}[1]{\Vert{#1} \Vert_{H^4_{x}L^2_{v}}}
\newcommand{\Hxxvv}[1]{\Vert{#1} \Vert_{H^2_{x}H^2_{v}}}

\newcommand{\HxxLv}[1]{\Vert{#1} \Vert_{H^2_xL^2_v}}
\newcommand{\HxxxLv}[1]{\Vert{#1} \Vert_{H^3_xL^2_v}}
\newcommand{\HxxxxLv}[1]{\Vert{#1} \Vert_{H^4_xL^2_v}}

\newcommand{\Hfourx}[1]{\Vert{#1} \Vert_{H^4_x}}

\newcommand{\Linftyx}[1]{\Vert{#1} \Vert_{L^\infty_x}}

\newcommand{\inx}[1]{\langle {#1} \rangle_{x}}

\newcommand{\inxv}[1]{\langle {#1} \rangle_{x,v}}

\usepackage{color}

\makeatother

\begin{document}
	
	\title{A Quantitative Hydrodynamic Limit for the Vlasov--Fokker--Planck/Navier--Stokes System in the Fine Particle Regime}

	\author[a]{Zhendong Fang}
	\author[b]{Jian Li}
	\author[c]{Kunlun Qi}
	\author[b]{Huanyao Wen\thanks{Corresponding author. Email: mahywen@scut.edu.cn.}}
	
	\affil[a]{School of Mathematics and Information Science, Guangzhou University, Guangzhou, 510006, P.~R.~China}
	\affil[b]{School of Mathematics, South China University of Technology, Guangzhou, 510641, P.~R.~China}
	\affil[c]{Department of Computational Mathematics, Science and Engineering and Department of Mathematics, Michigan State University, East Lansing, MI 48824, USA.}
	
	\date{}
	\maketitle
	\renewcommand{\thefootnote}{}

   \begin{abstract}
    We justify the hydrodynamic limit of the Vlasov--Fokker--Planck equation coupled with the incompressible Navier--Stokes equation in the fine regime with a quantitative convergence rate. In this regime, the limiting dynamics is described by the inhomogeneous incompressible Navier--Stokes system for the fluid--particle mixture.
    In contrast with the existing relative entropy approaches,
    %we provide the first rigorous justification of this limit in a pointwise strong sense
    the present work extends to a pointwise convergence of the hydrodynamic limits with an explicit rate on any given time interval $[0,T]$.
    The proof is based on a Macro--Micro decomposition around the moving local Maxwellian, together with an $\varepsilon$-weighted energy--dissipation structure adapted to the fine particle relaxation scales.
    Furthermore, we remove the classical small background restriction inherent in high-order energy methods, allowing the limiting profile to be possibly large. This is accomplished by exploiting a hidden Kelvin--Voigt type damped wave equation structure within the macroscopic velocity error system, yielding the critical extra dissipation required to close the uniform estimates.
\end{abstract}

	\vspace{4mm}
	
{\noindent \textbf{Keywords:} Hydrodynamic limit, Vlasov--Fokker--Planck/Navier--Stokes System, Fine particle,  Inhomogeneous incompressible Navier--Stokes equation.}
	
\vspace{4mm}
{\noindent\textbf{AMS Subject Classification (2020):} 35Q83, 35Q84, 35Q30, 35B40}

\tableofcontents

	%%%%%%%%%%%%%%%%%%%%%%%%%%%%%%%%%%%%%%%%%%%%%%%%%
	\section{Introduction}
	\label{section_1}
	
	\subsection{The model}
	\label{subsec:model}
	
	In this paper, we investigate the hydrodynamic limit of a coupled kinetic–fluid model that describes the collective dynamics of fine particles dispersed within an incompressible viscous fluid. This type of system is fundamental in the study of aerosols, sprays, and sedimentation processes \cite{SBRBKT03,WFA85}.
	
	The state of the particles is described by a non-negative density distribution function $f(t,x,v)$, representing the density of particles at time $t > 0$ and position $x\in \mathbb{R}^3 $ with velocity $v\in \mathbb{R}^3 $, whose evolution follows the Vlasov--Fokker--Planck (VFP) equation
	\begin{equation*}
	\p_t f + v \cdot \n_x f + \mathrm{div}_v (F_d f - r\n_v f) = 0.
	\end{equation*}
	where $r>0$ is the diffusion coefficient and $F_d$ denotes the acceleration exerted by the fluid on the particles through a drag force.
	
	The surrounding fluid is characterized by its velocity field $u(t,x) \in \mathbb{R}^3$ and pressure $P(t,x)$. Assuming the particle concentration is sufficiently dilute such that the fluid density remains constant, the fluid evolves according to the incompressible Navier--Stokes (INS) system:
	\begin{equation*}
	\left\{
	\begin{aligned}
	\p_t u+u \cdot \n_x u+\n_x P-\Delta_x u &= \mathcal{F},\\[5pt]
	\mathrm{div}_xu &= 0,
	\end{aligned}
	\right.
	\end{equation*}
	where $\mathcal{F}$ represents the momentum feedback from the particles onto the fluid.
	
	In fact, the coupling between the kinetic and fluid phases is mediated by the friction (or drag) force $F_d$. We adopt the standard linear closure where the Stokes drag is proportional to the relative velocity between the fluid and the particles:
	\[
	F_d = F_0 \big( u(t,x) - v \big),
	\]
	where $F_0 > 0$ is the drag coefficient. By Newton's third law, the feedback force $\mathcal{F}$ acting on the fluid is the negative of the total drag exerted on the particles, integrated over the velocity space:
	\[
	\mathcal{F} = - \int_{\mathbb{R}^3} F_df \,\d v = F_0 \int_{\mathbb{R}^3} f(v-u) \,\d v.
	\]

	After non-dimensionalization and adopting the fine-particle scaling of \cite{Goudon2004fine}, the scaled Vlasov--Fokker--Planck/incompressible Navier--Stokes (VFP-INS) system reads:
	\begin{equation}\label{f}
	\left\{
	\begin{aligned}
	&\p_t f^\v+v\cdot \n_xf^\v = \ \frac{1}{\v}\up{div}_v\big[\n_vf^\v+(v-u^\v)f^\v\big],\\[5pt]
	&\p_t u^\v+u^\v \cdot \n_x u^\v+\n_x P^\v = \ \Delta_x u^\v+\frac{1}{\v}\intv{(v-u^\v)f^\v},\\[5pt]
	&\mathrm{div}_x u^\v = \ 0,\\[5pt]
	&(f^\v(t,x,v),u^\v(t,x))|_{t=0}=(f^{\v,in}(x,v),u^{\v,in}(x)).\\[5pt]
	\end{aligned}
	\right.
	\end{equation}
  The initial data $\big(f^{\v,in}(x,v),u^{\v,in}(x) \big)$ satisfy the far-field behavior
	\begin{equation*}
	\big( f^{\v,in}(x,v),u^{\v,in}(x) \big)\rightarrow(M,0), \quad \text{as} \quad |x|\rightarrow +\infty,
	\end{equation*}
	where $M$ is the normalized global Maxwellian equilibrium
	\begin{equation*}\label{Mv}
	M=M(v):=\frac{1}{(2\pi)^{3/2}} \up{e}^{-\frac{|v|^2}{2}}.
	\end{equation*}
	
	The hydrodynamic limit of \eqref{f} corresponds to the regime $\v \rightarrow 0$, where the particle distribution $f^\v$ is expected to relax toward a local Maxwellian centered at the fluid velocity $u^\v$. Formally, this leads to a macroscopic description of the particle density fluctuation $\rho_0(t,x)$ and the fluid velocity $u_0(t,x)$, and the resulting limit system is the inhomogeneous incompressible Navier--Stokes (IINS) system \cite{Lions1996, Goudon2004fine} for $\tilde{\rho}_0=\rho_0+2$:
\begin{equation}\label{limit}
\left\{
\begin{aligned}
&\p_t\tilde{\rho}_0+u_0\cdot \n_x \tilde{\rho}_0 = \ 0,\\[5pt]
&\tilde{\rho}_0 \p_t u_0+\tilde{\rho}_0 u_0\cdot \n_xu_0+\n_x P_0-\Delta_xu_0 = \ 0,\\[5pt]
&\mathrm{div}_xu_0 = \ 0,\\[5pt]
&\big(\tilde{\rho}_0(t,x),u_0(t,x)\big)|_{t=0}=\big(\tilde{\rho}^{in}_0(x),u^{in}_0(x)\big).\\[5pt]
\end{aligned}
\right.
\end{equation}
This system describes the macroscopic dynamics of the fluid--particle mixture, where the total inertia of the mixture depends on the local particle concentration.

The main purpose of this paper is to rigorously justify the hydrodynamic limit from the strong solution of the scaled VFP-INS system \eqref{f} to the strong solution of the IINS system \eqref{limit} with an explicit convergence rate.
	
	%--------------------------------------
	\subsection{Previous results and main challenges}
	\label{subsec:previous}

	To clearly present the challenges of this paper, we first recall some results on the well-posedness and hydrodynamic limits of kinetic--fluid systems involving the Vlasov--Fokker--Planck equation coupled with fluid equations through drag interactions. We then explain the main analytical difficulties specific to the fine particle regime and describe our new ideas to overcome them.
	\smallskip

	\textit{Previous results on well-posedness.}
	The well-posedness theory for Vlasov--Fokker--Planck equation coupled with Navier--Stokes type systems has been extensively investigated. For the Vlasov--Fokker--Planck equation coupled with incompressible Navier--Stokes system, Goudon--He--Moussa--Zhang \cite{Goudon2010} established the global-in-time existence of classical solutions near equilibrium, together with further regularity properties and long-time behavior. Chae--Kang--Lee \cite{ChaeMyeongju2011} proved the existence of weak solutions in both two and three space dimensions and obtained uniqueness of smooth solutions in the two-dimensional case. For the corresponding diffusion-free Vlasov/Navier--Stokes system, Yu in \cite{Yu13} proved global weak existence in two and three dimensions and uniqueness in two dimensions, and the similar result was extended by Wang-Yu to the inhomogeneous Navier-Stokes-Vlasov equations in \cite{WY2015}.

	For compressible kinetic--fluid couplings, Mellet--Vasseur \cite{Mellet2007} established the existence of global weak solutions to the Vlasov--Fokker--Planck equation coupled with the compressible barotropic Navier--Stokes system in bounded domains under physically relevant boundary conditions.
    In a different asymptotic direction, we also mention the work of Flandoli--Leocata--Ricci \cite{Flandoli2021}, which provides a microscopic derivation of the Vlasov--Fokker--Planck/Navier--Stokes system from an interacting particle--fluid model, proving convergence of empirical measures to the kinetic density and convergence of the fluid velocity in a uniform topology.
	These works provide an analytical and modeling foundation for the study of kinetic--fluid coupled systems.
	
	\smallskip
	
	\textit{Previous results on hydrodynamic limits.}
	The hydrodynamic limit of the Vlasov--Fokker--Planck/Navier--Stokes system was systematically studied in the pioneering two-part work of Goudon--Jabin--Vasseur \cite{Goudon2004light, Goudon2004fine}, where two different asymptotic regimes were identified according to the relative scaling of the particle and fluid densities. In the light particle regime \cite{Goudon2004light}, the limiting system is the incompressible Navier--Stokes equation coupled with a Kramers--Smoluchowski type equation for the particle density. In the fine particle regime \cite{Goudon2004fine}, where the particle density is of the same order as the fluid density, the limiting dynamics is governed by the inhomogeneous incompressible Navier--Stokes system for the fluid--particle mixture.
     This distinction is also reflected in the relevant kinetic equilibrium: the light particle regime is associated with a fixed centered Maxwellian velocity profile, whereas the fine particle regime involves a $t,x$-dependent local Maxwellian centered at the fluid velocity. The latter dependence becomes particularly significant in a differentiated high-order analysis.
	As a consequence, the methods and convergence topologies in these two regimes are substantially different. In the light particle regime, the proof relies on free-energy estimates and compactness arguments. In the fine particle regime, Goudon--Jabin--Vasseur \cite{Goudon2004fine} employed the relative entropy method to compare weak solutions of the scaled Vlasov--Fokker--Planck/Navier--Stokes system with smooth solutions of the limiting inhomogeneous incompressible Navier–Stokes system. This yields convergence of the particle density in an \(L^\infty(0,T;L^1)\)-type topology and convergence of the fluid velocity in an \(L^\infty(0,T;L^2)\)-type topology on any fixed time interval.

	Several extensions and related results have subsequently appeared. Su--Yao \cite{YunfeiSu2020} studied hydrodynamic limits for inhomogeneous incompressible Navier--Stokes/Vlasov--Fokker--Planck systems, typically in bounded or periodic domains, using relative entropy type approaches in different functional settings. Recently, Choi--Jung \cite{ChoiJung2026} explicitly developed a quantitative relative entropy theory for the light particle VFP-INS limit. In addition, Han-Kwan-Michel proposed a framework to justify hydrodynamic limits of the Vlasov--Navier--Stokes system in \cite{Han-Kwan-Michel2024}. For the Vlasov--Fokker--Planck equation coupled with the compressible Navier--Stokes system, Mellet--Vasseur \cite{Mellet2008} justified hydrodynamic limits in bounded domains with Dirichlet or reflection boundary conditions, building on their earlier well-posedness work \cite{Mellet2007}.

    Relative entropy methods are quite useful and naturally yield weak-strong stability and strong convergence in integral norms. Quantitative relative entropy inequalities may also yield explicit modulated energy bounds once the initial relative entropy is quantified.
    To obtain stronger convergence, Fang--Qi--Wen \cite{Fang-Qi-Wen2024} recently extended the result in the light particle regime of the VFP-INS system by a Hilbert expansion approach and derived pointwise convergence with an explicit rate.
    For hydrodynamic limits of other kinetic models, including the Boltzmann equation and related kinetic--fluid systems, we refer the reader to \cite{Bardos1991-2} and the references therein.

	\smallskip
	%--------------------------
	
	\smallskip
	\textit{Mathematical challenges.}

    The purpose of the present paper is to establish a quantitative strong solution theory for the hydrodynamic limit of VFP-INS system in fine particle regime. More precisely, we compare a strong solution of the scaled VFP-INS system \eqref{f}  with a prescribed smooth solution of the IINS system \eqref{limit} and prove pointwise convergence in \(L^\infty_{x,v}\times L^\infty_x\) with an explicit rate. The result is therefore complementary to the relative entropy theory of \cite{Goudon2004fine}, which applies at the finite energy weak solution level and gives strong convergence in natural integral norms, whereas the present work assumes the initial condition with high-regularity and a smooth limiting profile (``strong solution"-to-``strong solution") and obtain pointwise estimate with quantitative convergence rate. Beyond strengthening the convergence topology, the reason why we investigate this pointwise theory is also motivated in part by the future analysis of boundary effects in coupled kinetic--fluid models.
    Compared with the Hilbert-expansion framework developed for the light particle regime in \cite{Fang-Qi-Wen2024}, the fine particle scaling considered here leads to a different limiting structure and requires a substantially different energy method. The main challenges and novelties are presented as follows.

	\begin{itemize}

\item \textbf{A $t,x$-dependent Macro-Micro decomposition around a local Maxwellian equilibrium.}
In previous studies on global well-posedness near equilibrium \cite{Goudon2010} and on the hydrodynamic limit of light particle \cite{Fang-Qi-Wen2024}, the Macro--Micro decomposition is performed around the global Maxwellian distribution. This choice is natural in the light particle scaling, where the leading equilibrium is spatially homogeneous. In the fine particle regime, however, the dominant Fokker--Planck relaxation drives the particle distribution toward
\[
M_{u^\v}(t,x,v)
=
\frac{1}{(2\pi)^{3/2}}
\exp\!\left(-\frac{|v-u^\v(t,x)|^2}{2}\right),
\]
whose center is the unknown fluid velocity. Accordingly, we write
\[
f^\varepsilon = M_{u^\v} + g^{\v} \sqrt{M_{u^\v}},
\qquad \text{with} \quad
g^{\v}=\mathbf P g^{\v} + (\mathbf I-\mathbf P)g^{\v}.
\]
A local Maxwellian also appears in the relative entropy comparison of \cite{Goudon2004fine}; the novelty here is not the local Maxwellian itself, but its use in a differentiated high-order macro--micro theory. Indeed, \(M_{u^{\v}}\), the projection \(\mathbf P\), and the linearized Fokker--Planck operator all depend nonlinearly on \(u^{\v}(t,x)\). Spatial differentiation of the singular relaxation term therefore produces \(\varepsilon^{-1}\)-weighted commutators involving derivatives of \(u^{\v}\), together with additional couplings between the macroscopic moments and the microscopic component. These terms are absent when the velocity profile is a fixed global Maxwellian, as in the light particle analysis of \cite{Fang-Qi-Wen2024}.
To control the extra terms, we introduce the following two new types of estimates:
\begin{itemize}
\item \textbf{Macroscopic error estimate.} We instead introduce the macroscopic errors functions
\[
\phi:=a-\rho_0,\quad \psi:=u^{\v}-u_0,
\]
with density $a$ defined in \eqref{eq:def-a-b}, and then derive the corresponding coupled system \eqref{eq:error}, together with the microscopic component. This formulation separates the possibly large limiting profile from the genuinely small error variables and rapidly relaxing terms.

\item \textbf{$\v$-weighted energy estimate.} We construct an \(\varepsilon\)-weighted energy estimate based on the Macro-Micro decomposition. Its weights reflect the different relaxation scales of the density, the momentum, the fluid velocity, and the microscopic component. The resulting energy--dissipation structure is adapted to the fine particle relaxation mechanism and absorbs the singular commutators generated by the differentiated moving Maxwellian.
\end{itemize}

\item \textbf{Possibly large regular limiting profile.}
Our results hold as long as the limiting system \eqref{limit} is well-posed with sufficiently high regularity on an arbitrary time interval $[0,T]$, requiring no smallness conditions on its solution.
Specifically, the limiting solution is retained as a prescribed variable coefficient, the weights of which, at the highest derivative level,
\begin{equation}\label{eq:weighted-h-u}
    \varepsilon^2
\|\nabla_x h\|_{H_x^3L_v^2}^2
+
\varepsilon^3
\|\nabla_x^2 u^{\v}\|_{H_x^3}^2,
\qquad h = g^{\v} + \sqrt{M_{u^{\v}}} = \frac{f^\varepsilon}{\sqrt{M_{u^{\v}}}}
\end{equation}
match the singular relaxation scale and inherently allow the corresponding unweighted norms $\|\nabla_x h\|_{H^3_xL^2_v}^2$ and $\|\nabla_x^2 u^{\v}\|_{H^3_x}^2$ to be large, fully accommodating the large background.

With non-smallness condition, the leading macroscopic variables are not necessarily small: one expects \(a \approx \rho_0\) and \(u^{\v} \approx u_0\), and the prescribed limiting solution \((\rho_0,u_0)\) may have large high-order Sobolev norms on a certain time interval.
This leads to the following two critical mathematical challenges in both the error estimate and $\v$-weighted energy estimates, and we propose some novel ideas to overcome them:

\begin{itemize}

\item \textbf{Difficulty I (error estimate):}
When there is no smallness requirement in the background profile, a more delicate obstruction occurs in the macroscopic velocity error equation $\eqref{eq:error}_4$, whose principal part is
\begin{equation}\label{partialtb1}
(a+2)\partial_t\psi-\Delta_x\psi+\partial_tb=\cdots,
\end{equation}
where $b$ is defined in \eqref{eq:def-a-b} and $\p_t b$ presents a severe obstacle as it leads to the critically indefinite term
\begin{equation*}\label{eq:bt-motivation}
\begin{aligned}
&\intx{\left(\pt \px^\a b\cdot  \px^\a \psi+\frac{| \px^\a b|^2}{\v (a+1)}+\v | \px^\a \pt \psi|^2 \right)}\\
&=\intx{\boxed{ \frac{| \px^\a b|^2}{\v (a+1)}+\v | \px^\a \pt \psi|^2- \px^\a b\cdot  \px^\a \pt\psi } }+\dt{t} \intx{ \px^\a b\cdot  \px^\a \psi},
\end{aligned}
\end{equation*}
where, if $a$ has smallness, the boxed quadratic form above is positive definite, retaining the coercivity; Without such a restriction, however, the boxed term is not guaranteed to be uniformly coercive. This loss is critical at \(|\alpha|=3\).

\textbf{Idea I:} To resolve this issue, we need extra dissipation. The key observation is that the fluid error equation expresses \(b\) as \(\varepsilon\) times a combination containing \(\partial_t\psi-\Delta_x\psi\). Differentiating this identity in time and substituting it into the preceding equation \eqref{partialtb1} reveals, at the principal level,
\[
\begin{aligned}
\varepsilon\partial_t^2\psi
-\varepsilon\Delta_x\partial_t\psi
+(a+2)\partial_t\psi
-\Delta_x\psi
&=\text{lower-order, pressure and transport terms}.
\end{aligned}
\]
This is a singularly perturbed strongly damped wave equation with Kelvin--Voigt type damping \(-\varepsilon\Delta_x\partial_t\psi\). Testing it yields the additional energy and dissipation
\[
\varepsilon\|\partial_t\psi\|_{L_t^\infty H_x^2}^2,
\qquad
\varepsilon\|\nabla_x\partial_t\psi\|_{L_t^2H_x^2}^2,
\]
respectively. The latter reaches third order derivatives and supplies exactly the dissipation missing from the direct time-differentiated estimate.

\item \textbf{Difficulty II ($\v$-weighted energy estimate):}  A standard energy argument applied directly to \((h,u)\) has difficulties in controlling the nonlinear terms, unless incorrectly treating the limiting profile itself as a small perturbation.

More precisely, the difficulty is that the variable \(\rho (=a+1)\) as in \eqref{eq:relation-g-h} need not necessarily be close to $1$. When estimating \eqref{eq:weighted-h-u} in Lemma \ref{lem:hH4}, we obtain the following energy‑dissipation structure
\begin{equation*}
\begin{aligned}
&\frac{1}{2}\dt{t}\left(\varepsilon^2\|\partial_x^\alpha h\|_{L^2_{x,v}}^2+\frac{\varepsilon^3}{2}\|\partial_x^\alpha\nabla_x u^{\v}\|_{L^2_x}^2\right)\\[5pt]
&\boxed{+\v^3\|\partial_x^\alpha \partial_t u^{\v}\|_{L^2_x}^2+\v\|\partial_x^\alpha b\|_{L^2_x}^2-\v^2\big\langle \left( \frac{\rho}{2}-1\right)\partial_x^\alpha \partial_t u^{\v},\, \partial_x^\alpha b\big\rangle_x}+\cdots.
\end{aligned}
\end{equation*}
however, since $\rho$ can be possibly large, the boxed quadratic form above are generally not positive definite.

\textbf{Idea II:} Our strategy is to eliminate $b$ using the momentum equation \eqref{nov}$_2$, and rewrite the energy‑dissipation structure as
\[
\frac{1}{2}\dt{t}\left(\varepsilon^2\|\partial_x^\alpha h\|_{L^2_{x,v}}^2 + \frac{\varepsilon^3}{2}\|\sqrt{\rho}\partial_x^\alpha\nabla_x u^{\v}\|_{L^2_x}^2\right)
+\frac{\varepsilon^3}{2}\|\sqrt{\rho}\,\partial_x^\alpha \partial_t u^{\v}\|_{L^2_x}^2+\varepsilon\|b_\alpha\|_{L^2_x}^2 + \cdots
\]
with this density‑weighted energy‑dissipation structure and the fact that $\rho$ admits a positive lower bound, we derive the desired positive‑definite structure. This strategy is essential to close the highest-order energy estimate without assuming that the high Sobolev norm of the background profile is small.
\end{itemize}

\end{itemize}

To the best of our knowledge, this is the first high-regularity justification of the fine particle VFP-INS limit that yields pointwise convergence with an explicit rate.
The proof combines a Macro--Micro decomposition around local Maxwellian with a delicate high-order uniform energy estimates, and the strategy developed here is expected to be useful for other singular limits in kinetic--fluid coupled systems where the natural equilibrium depends on macroscopic physical quantities, for example, the investigation of the Vlasov--Fokker--Planck equation coupled with the compressible Navier--Stokes equation is under preparation.
	
%------------------------------------------------------
	
\subsection{Notation and organization}

	We first present some notation that will be used throughout this paper.
	\begin{itemize}
        \item $C, C_1, C_\delta, C(T)$, and $C_\delta(T)$ denote generic constants. More specifically, $C$ and $C_1$ are independent of $\v$, $ \eta$, $T$, $T_1$, and $T^\v_{max}$; $C_\delta$ depends only on $\delta$; $C(T)>1$ only depends on $T$, and $C_\delta(T)>1$ depends on both $\delta$ and $T$.
		\item $A \le B$ denotes that there exists a generic constant $C>0$ such that $A \leq C B $.
		\item For multi-indices $\a=(\a_1,\a_2,\a_3) $ and $\b=(\b_1,\b_2,\b_3) $, we denote the binomial coefficient of $\alpha$ and $\beta$ as $C_{\alpha}^{\beta}$ and the partial derivatives
		\[
		\p^\a_{x}=\p^{\a_{1}}_{x_1}	\p^{\a_{2}}_{x_2}\p^{\a_{3}}_{x_3},\qquad \p^\b_{v}=\p^{\b_{1}}_{v_1}	\p^{\b_{2}}_{v_2}\p^{\b_{3}}_{v_3}.
		\]

		\item We denote
		\[
		\intx{g}:=\int_{\mathbb{R}^3} g \,\mathrm{d}x, \quad \intv{g}:=\int_{\mathbb{R}^3}g \,\mathrm{d}v,\quad \intxv{g}:=\int_{\mathbb{R}^3}\int_{\mathbb{R}^3} g \,\mathrm{d}x \,\mathrm{d}v,
		\]
		\[
		\|g\|_\nu:=\left(\intxv{(1+|v-u|^2)|g|^2+|\n_v g|^2}\right)^\frac{1}{2}.
		\]

		\item For an integer $k$, we denote the following spaces:
		\begin{equation*}
		\begin{aligned}
		&H^k_v:=\left\{ f\ |\ \Ltwov{\partial_v^\alpha f}< +\infty,\ \text{for}\ 0\leq |\alpha|\leq k   \right\},\\
		&\mathcal{H}^k_v:=\left\{ f\ |\ \Lv{\partial_v^\alpha f}< +\infty,\ \text{for}\ 0\leq|\alpha|\leq k   \right\},\\   	
		&H^k_x:=\left\{ u\ |\ \Ltwox{\partial_x^\alpha u}< +\infty,\ \text{for}\ 0\leq|\alpha|\leq k   \right\},\\
		&\mathcal{H}^k_x:=\left\{ u\ |\ \Ltwox{\partial_x^\alpha u}< +\infty,\ \text{for}\ 1\leq|\alpha|\leq k   \right\}.
		\end{aligned}
		\end{equation*}
	\end{itemize}
	
The rest of this paper is organized as follows. In Section \ref{subsec:main-thoerem}, we introduce the Macro-Micro decomposition around the local Maxwellian, reformulate the original system into the coupled macroscopic and microscopic equations, and derive the governing system for the macroscopic error terms. Section \ref{sec:global} is devoted to establishing the uniform \textit{a priori} energy estimates with respect to $\varepsilon$, which consist of the macroscopic error estimates and the high-order estimates of the microscopic parts. Finally, some preliminary lemmas and necessary technical tools are collected in the Appendix.

%%%%%%%%%%%%%%%%%%%%%%%%%%%%%%%%%%%%%%%

\section{Main results}
\label{subsec:main-thoerem}

In this section, we will present our basic setup and main results.

%-------------------------------
\subsection{Macro-Micro decomposition}
\label{Macro-Micro decomposition}
	
We consider the Macro-Micro decomposition of $f^\v$ in the sense that
	\begin{equation}\label{MM}
	f^\varepsilon(t,x,v)=M_{u^\v (t,x)}(v)+g^{\v}(t,x,v)\sqrt{M_{u^\v (t,x)}(v)},
	\end{equation}
where
	\begin{equation}\label{Mlocal}
	M_{u^\v (t,x)}(v)=\frac{1}{(2\pi)^\frac{3}{2}} \mathrm{exp}\left\{-\frac{|v-u^\v(t,x)|^2}{2}\right\}.
	\end{equation}
For simplicity, we omit the $\varepsilon$-dependence in notation, i.e., denoting
\begin{equation}\label{nov}
    M_u:=M_{u^\v}, \quad g:=g^\v, \quad u:=u^\v, \quad P:=P^\v,
\end{equation}
such that the VFP-INS system \eqref{f} is rewritten as
\begin{equation}\label{The system of (g,u)}
\left\{
\begin{aligned}
&\p_t g + v\cdot \n_x g + \frac{1}{\v}\mathbf{L} g = R_0,\\[5pt]
&\p_t u + u\cdot \n_x u + \n_x P-\Delta_x u =\frac{b}{\v},\\[5pt]
&\big( g(t,x,v),u(t,x) \big)|_{t=0} = \big( g^{\v,in}(x,v),u^{\v,in}(x) \big),
\end{aligned}
\right.
\end{equation}
where the linearized operator $\mathbf{L}$ and the remainder term $R_0$ are given by
\begin{equation}\label{sourse}
\begin{aligned}
&\mathbf{L}g := -\frac{1}{\M}\mathrm{div}_v\left[ M_u \n_v \left(\frac{g}{\M}\right) \right], \\[5pt]
&R_0:=-(v-u)\cdot \p_t u \M- \big[(v\cdot \n_x) u\big]\cdot (v-u)\M - \frac{1}{2}g(v-u)\cdot \p_t u\\
& \ \qquad-\frac{1}{2}g\big[(v\cdot \n_x)u\big]\cdot (v-u).
\end{aligned}
\end{equation}
The operator $\mathbf{L}$ satisfies a dissipation property (see Lemma \ref{lemma A2}):
\begin{equation*}
c\|(\mathbf{I-P})g\|_{\nu}^2+\Lx{b}^2 \leq \intxv {g\mathbf{L}g}.
\end{equation*}

We can define the macroscopic quantities $a$ and $b$ as
\begin{equation}\label{eq:def-a-b}
a: = a(t,x) = \int  g\M \,\d v, \quad b: = b(t,x) = \int (v-u)g\M \,\d v,
\end{equation}
and then decompose $g$ into the macroscopic part $\mathbf{P}g $ and the microscopic part $(\mathbf{I-P})g $ as follows:
	\begin{equation}\label{decom}
	g=\mathbf{P}g+(\mathbf{I-P})g,
	\end{equation}
	where the projection $\mathbf{P}$ is given by
	\begin{equation*}
	\mathbf{P}=\mathbf{P}_0+\mathbf{P}_1,\quad \text{with} \quad  \mathbf{P}_0g :=a\M, \quad  \mathbf{P}_1 g :=b\cdot (v-u)\M.
	\end{equation*}

	To close the system \eqref{The system of (g,u)}, we need to derive the macroscopic system of $a$ and $b$: multiplying $\eqref{The system of (g,u)}_1$ by $\M $ and $(v-u)\M $, and then integrating over $v \in \mathbb{R}^3 $, we obtain
	\begin{equation}\label{g-0}
	\left\{
	\begin{aligned}
	&\p_t a +\up{div}_x b+u\c\n_x a =   0,\\[5pt]
	&\p_t b + \p_t u + \n_x a + \frac{b}{\v} = R_1,
	\end{aligned}
	\right.
	\end{equation}
	where the remainder term $R_1$ reads
	\begin{equation}\label{sourse-1}
	\begin{aligned}
	R_1 :=& -a \p_t u - au\cdot \n_xu -u\c\n_x u - u\cdot \n_x b - b\cdot \n_xu\\[3pt]
	&- \l v\cdot\n_x((\mathbf{I-P})g\M),v\r_v.
	\end{aligned}
	\end{equation}
    To eliminate the singular term $\frac{b}{\v}$ in $\eqref{The system of (g,u)}_2$, we add this equation to $\eqref{g-0}_2$ and arrive at:
	\begin{equation}\label{The equation of ptu+ptb}
	\begin{array}{ll}
	2\p_t u + \p_t b +u\cdot \n_x u+ \n_x a + \n_x P-\Delta_xu = R_1.
	\end{array}
	\end{equation}
	By further combining \eqref{g-0} and \eqref{The equation of ptu+ptb}, we obtain
	\begin{equation}\label{g-1}
	\left\{
	\begin{aligned}
	&\p_t a +\up{div}_x b +u\c\n_x a= 0,\\[5pt]
	&\p_t b + \p_t u + \n_x a + \frac{b}{\v} = R_1,\\[5pt]
	&2\p_t u + \p_t b +u\cdot \n_xu+ \n_x (a+P)-\Delta_xu = R_1.
	\end{aligned}
	\right.
	\end{equation}

Finally, gathering \eqref{The system of (g,u)} and \eqref{g-1} yields the following scaled VFP-INS system based on the Macro-Micro decomposition \eqref{MM}:
	\begin{equation}\label{g}
	\left\{
	\begin{aligned}
	&\p_t g + v\cdot \n_x g + \frac{1}{\v}\mathbf{L} g = R_0,\\[3pt]
	&\p_t u + u\cdot \n_x u + \n_x P-\Delta_x u = \frac{b}{\v},\\[3pt]
	&\p_t a +\up{div}_x b +u\c\n_x a= 0,\\[3pt]
	&\p_t b + \p_t u + \n_x a + \frac{b}{\v} = R_1,\\[3pt]
	&2\p_t u + \p_t b + u\cdot \n_x u+ \n_x (a+P)-\Delta_xu = R_1,\\
	\end{aligned}
	\right.
	\end{equation}
	with $\mathbf{L} g,\,R_0,\,R_1$ given in \eqref{sourse} and \eqref{sourse-1}, and the initial conditions
	\begin{equation*}
	\left\{
	\begin{aligned}
	&\big( g(t,x,v),u(t,x) \big) |_{t=0} = \big( g^{\v,in}(x,v),u^{\v,in}(x) \big),\\[3pt]
	&a(t,x)|_{t=0} = a^{in}(x):=\intv{g^{\v,in}\sqrt{M^{in}}},\\[3pt]
	&b(t,x)|_{t=0} = b^{in}(x):=\intv{(v-u^{\v,in})g^{\v,in}\sqrt{M^{in}}},
	\end{aligned}
	\right.
	\end{equation*}
	where the initial Maxwellian distribution $M^{in}$ is
	\begin{equation*}\label{eq:Min}
	M^{in}:=\frac{1}{(2\pi)^{3/2}} \mathrm{exp}\left\{ -\frac{|v-u^{\v,in}(x)|^2}{2}\right\}.
	\end{equation*}

To clarify the limiting process from the VFP-INS system \eqref{f} to the IINS system \eqref{limit}, we introduce the macroscopic error functions defined by
\begin{equation}\label{ma-error}
\phi:=a-\rho_0,\quad \psi:=u-u_0,
\end{equation}
where $(a,u)$ and $(\rho_0,u_0)$ denote the solutions to \eqref{g} and \eqref{limit}, respectively. Consequently, the governing system for the error terms $(\phi,\psi)$ can be derived as follows:
\begin{equation}\label{eq:error}
\left\{
\begin{aligned}
&\p_t \phi+\div_x b+\n_x\phi\cdot\psi+\n_x\rho_0\cdot\psi+\n_x\phi\cdot u_0 =0,\\[5pt]
&(a+1)\p_t \psi+(a+1)u\cdot\n_x\psi+\n_x\phi+\p_t b+\frac{b}{\v} =-R_2-R_3,\\[5pt]
&\p_t \psi+u\cdot\n_x\psi+\n_x(P+\rho_0-P_0)-\Delta_x\psi-\frac{b}{\v} =R_3-R_4,\\[5pt]
&(a+2)\p_t \psi+(a+2)u\cdot\n_x\psi+\n_x(P+a-P_0)-\Delta_x\psi+\p_t b =-R_2-R_4,\\[5pt]
&\div_x\psi =0,
\end{aligned}
\right.
\end{equation}
with the initial condition $(\phi,\psi)|_{t=0}=(a^{in}-\rho_0^{in},u^{in}-u^{in}_0)$, where the remainder terms $R_2, R_3$, and $R_4$ are given by
\begin{equation}\label{eq:sourse 3}
\left\{
\begin{aligned}
R_2 =& \ \phi \p_t u_0+(a+1)\psi\cdot\n_xu_0+\phi u_0\cdot\n_xu_0+u\cdot \n_x b\\[3pt]
& +b\cdot\n_xu+\l v\cdot\n_x((\mathbf{I-P})g\M),v\r_v,\\[5pt]
R_3 =&\  (\rho_0+1)\p_t u_0+(\rho_0+1)u_0\cdot\n_xu_{0}+\n_x\rho_0,\\[5pt]
R_4 =&\  \psi\cdot\n_xu_0.
\end{aligned}
\right.
\end{equation}

%--------------------
\subsection{Main theorem}

In this subsection, we present our main theorem. For clarification, we first define the total energy functional $\E(t)$ as
\begin{equation}\label{defE}
\begin{aligned}
\E(t)=&\Hxxx{(\phi,\psi,b)}^2+\frac{\Hxx{b}^2}{\varepsilon}+\varepsilon\Hxx{\p_t\psi}^2+\varepsilon\Hxxx{\nabla_x\psi}^2+\varepsilon^2\HxxxLv{\nabla_xh}^2+\varepsilon^3\Hthreex{\nabla_x^2 u}^2\\[5pt]
&+\|(\mathbf{I-P})g\|_{H^2_x H^2_v}^2+\v\|(\mathbf{I-P})g\|_{H^3_x L^2_v}^2,
\end{aligned}
\end{equation}
where $h=g+\sqrt{M_u} $.

As noted above, our main hydrodynamic limit theorem holds provided the existence of a sufficiently regular limiting profile on a prescribed finite time interval $[0,T]$.
Hence, we present the well-posedness and regularity theory of the limiting IINS system \eqref{limit} to ensure the existence of such a case: for certain initial conditions, there exists a solution $(\rho_0,u_0)$ to the limiting system \eqref{limit} satisfying
\begin{equation}\label{eq:regu limit}
0\leq \rho_0\leq \overline{\rho}<+\infty,\quad\partial^k_t \rho_0, \ \partial^k_t u_0 \in  C\big([0,T];H^{6-2k}_x(\mathbb{R}^3)\big)
\end{equation}
with $k=0,1,2$.

This result therefore provides a non-empty class of limiting profiles to which our main theorem applies. We refer the reader to Proposition \ref{prop:limit} and Corollary \ref{coro:limit} for further details.

We are now in a position to present our main theorem.

\begin{theorem}\label{thm1}
	Assume the initial condition of the system \eqref{f} satisfies
    \[f^{\varepsilon,in} = M^{in}+g^{in}\sqrt{M^{in}}, \quad u^{\varepsilon,in} = u^{in}
    \]
    with
	\[
	g^{in}\in H^4_xL^2_v\cap H^2_xH^2_v,\quad u^{in}\in H^5_x.
	\]
	and the initial condition of the systems \eqref{g} and \eqref{eq:error} satisfies the well-preparedness condition, for $0<\gamma\leq 1$,
	\begin{equation}\label{The condition of E}
	\E(0)\leq C\v^\gamma.
	\end{equation}
	Then, for any $T>0$, there exists $\varepsilon_0=\varepsilon_0(T)>0$ such that, for any $0<\varepsilon\leq\varepsilon_0$,  the system \eqref{g} admits a unique solution $(g,u)$ on $[0,T]$
    satisfying
	\begin{equation*}\label{E(t)}
	\sup_{t\in[0,T]}\mathbb E(t)\leq C(T)(\v^\gamma+\varepsilon).
	\end{equation*}
	In particular, the following convergence estimate
    \begin{equation*}\label{eq:thm1-conv}
    \begin{aligned}
    &\sup_{t\in[0,T]}
	\Big(
	\|f^\varepsilon(t,x,v)-(1+\rho_0(t,x))M_{u_0(t,x)}(v)\|^2_{L_x^\infty L_v^\infty}
	+\|u^\varepsilon(t,x)-u_0(t,x)\|^2_{L_x^\infty}
	\Big)\\[3pt]
	\leq &C(T)(\v^\gamma+\varepsilon)
    \end{aligned}
    \end{equation*}
    holds,
	where $(f^\v, u^\v)$ in the form of \eqref{MM}-\eqref{nov} is the solution to  system \eqref{f}, and $(\rho_0,u_0)$ is the solution to system \eqref{limit} satisfying \eqref{eq:regu limit}.
\end{theorem}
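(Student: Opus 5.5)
The proof will follow the standard continuity argument: local existence, uniform a priori estimates, and a bootstrap. First, a local well-posedness result for \eqref{g} (equivalently \eqref{f}) in the energy space defined by $\E(t)$ follows from classical iteration for Vlasov--Fokker--Planck/Navier--Stokes with $H^4_xL^2_v\cap H^2_xH^2_v$ data. It yields a maximal time $T^\v_{max}$ and continuity of $\E(t)$. We then fix $T$ and introduce the bootstrap hypothesis
\[
\sup_{0\le s\le t}\E(s)\le \eta,\qquad t<T_1:=\min\{T,T^\v_{max}\},
\]
with $\eta$ small but independent of $\v$. Under this hypothesis, $a=\rho_0+\phi$ is uniformly close to $\rho_0$ in $L^\infty_x$ by Sobolev embedding, so $\rho=a+1\ge 1/2$ and $a+2\ge 3/2$. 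The regularity \eqref{eq:regu limit} of $(\rho_0,u_0)$ makes every coefficient built from the limit profile bounded by $C(T)$ in the norms needed. No smallness of $(\rho_0,u_0)$ is used anywhere.

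The core step is a differential inequality of the form
\[
\frac{\d}{\d t}\widetilde{\E}(t)+\mathbb D(t)\le C(T)\big(\widetilde{\E}(t)+\v\big)+C(T)\,\eta^{1/2}\mathbb D(t),
\]
where $\widetilde{\E}\simeq\E$ is a modified energy containing the density-weighted terms and cross terms such as $\intx{\p_x^\a b\cdot\p_x^\a\psi}$.

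The estimate is assembled in four steps.
\begin{enumerate}
\item \emph{Error estimates for $(\phi,\psi,b)$ in $H^3_x$.} Test $\eqref{eq:error}_1$ with $\p_x^\a\phi$, $\eqref{eq:error}_2$ with $\p_x^\a b$, and $\eqref{eq:error}_4$ with $\p_x^\a\psi$. The source $R_3$ is $O(1)$ but appears only against $b/\v$-type terms, so it is paid by $\v\|R_3\|^2$. This produces the $\v$ in the final bound.
\item \emph{Recovering $\p_t\psi$ (Idea I).} Eliminate $b$ through $\eqref{eq:error}_3$, $b=\v(\p_t\psi-\Delta_x\psi+\cdots)$, and differentiate in time. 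This yields the Kelvin--Voigt damped wave equation. Testing it with $\p_x^\a\p_t\psi$ for $|\a|\le2$ gives the energy $\v\Hxx{\p_t\psi}^2$ and the dissipation $\v\|\n_x\p_t\psi\|_{H^2_x}^2$. This dissipation absorbs the non-coercive boxed form at $|\a|=3$.
\item \emph{Microscopic estimates.} Apply $\p_x^\a$ to $\eqref{g}_1$ for $|\a|\le2$, and add mixed $\p_x^\a\p_v^\b$ estimates with $|\b|\le2$ using the coercivity of $\mathbf L$ (Lemma \ref{lemma A2}). The commutators from the $t,x$-dependence of $\M$ and $\mathbf P$ carry derivatives of $u$ and are controlled by $\v$-weighted dissipation. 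At third order, $\v\|\mi g\|_{H^3_xL^2_v}^2$ is handled with the $\v$ weight.
\item \emph{Top order (Idea II).} Estimate $\v^2\|\p_x^\a h\|^2+\v^3\|\sqrt\rho\,\p_x^\a\n_xu\|^2$ for $|\a|=3$ as in Lemma \ref{lem:hH4}. Eliminate $b$ via $\eqref{nov}_2$ so that the quadratic form becomes $\rho$-weighted and coercive.
\end{enumerate}

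Nonlinear terms are treated by Moser-type product estimates, each bounded by $(\eta^{1/2}+\v^{1/2}C(T))\mathbb D$ or by $C(T)\widetilde\E$.

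To close, Gr\"onwall gives $\E(t)\le C(T)(\E(0)+\v)\le C(T)(\v^\gamma+\v)$. Choosing $\v_0(T)$ so that this is $\le\eta/2$ closes the bootstrap and gives $T^\v_{max}>T$. Uniqueness follows from an $L^2$ stability estimate on the difference of two solutions.

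For the pointwise bound, write
\[
f^\v-(1+\rho_0)M_{u_0}=(M_u-M_{u_0})+\rho_0(M_u-M_{u_0})+\phi M_u+b\cdot(v-u)M_u+\mi g\M .
\]
The first three terms are $O(|\psi|+|\phi|)$ in $L^\infty_{x,v}$. For the last two, use $H^2_x\subset L^\infty_x$, together with $H^2_v\subset L^\infty_v$ for $\mi g$. This gives a bound by $\E\le C(T)(\v^\gamma+\v)$.

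The main obstacle is the third-order estimate: handling the indefinite $\p_tb$ coupling and the $\v^{-1}$ commutators from the moving Maxwellian with a large background. Steps 2 and 4 must be balanced with weights chosen in the right order. I would first fix the weight of the wave-equation estimate as a small constant times $\v$, then the top-order $h$-estimate weight, tuning them so the cross terms are absorbed.
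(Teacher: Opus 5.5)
Your proposal is correct and follows essentially the same route as the paper: a bootstrap on $\E$, $H^3_x$ error estimates for $(\phi,\psi,b)$ with $\p_t b$ replaced via \eqref{eq:error}$_3$ (the Kelvin--Voigt structure, for $|\alpha|\leq 2$), the $\rho$-weighted top-order $(h,u)$ estimate, microscopic and mixed $x$--$v$ estimates for $\mi g$, Gr\"onwall, and Sobolev embedding for the pointwise bound. One slip: the top-order step must run over $1\leq|\alpha|\leq 4$, not $|\alpha|=3$, because $\E$ contains $\v^2\|\n_x h\|_{H^3_xL^2_v}^2+\v^3\|\n_x^2 u\|_{H^3_x}^2$ and the resulting dissipation $\v\|\mi g\|_{H^4_xL^2_v}^2$ is what controls the fourth-order microscopic term in the third-order $b$ estimate.
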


\begin{remark}
The possibly large limiting profiles and compatibility of the initial energy are further explained as follows:
\begin{enumerate}
\item The well-preparedness condition \eqref{The condition of E} is imposed on the error and non-equilibrium components, rather than on the leading macroscopic profile itself. Indeed, the full macroscopic variables \((a,u)\) need not be small, since
\[
a=\rho_0+\phi,
\qquad
u=u_0+\psi.
\]
%with the limiting profile \((\rho_0,u_0)\).
Only the macroscopic errors \((\phi,\psi)\), the rapidly relaxing momentum \(b\), and the microscopic component \(\mi g\) are inherently expected to vanish $\varepsilon \to 0$.\\
Also note that, the $\v$-weights in \eqref{defE} are essential here. Suppose, for example, that
\[
\HxxxLv{\nabla_x h^{in}}^2
+
\Hthreex{\nabla_x^2u^{in}}^2
\leq C_{in},
\]
where \(C_{in}\) is independent of \(\varepsilon\), but is not required to be small and may depend on the size of the macroscopic profile. Then, for \(0<\varepsilon \leq 1\) and \(0<\gamma \leq 1\),
\[
\varepsilon^2\HxxxLv{\nabla_x h^{in}}^2
+
\varepsilon^3\Hthreex{\nabla_x^2u^{in}}^2
\leq C_{in}\varepsilon^2
\leq C_{in}\varepsilon^\gamma.
\]
Thus, the highest-order unweighted norms of the full kinetic and fluid variables may be large, while their contributions to the initial energy remain compatible with \eqref{The condition of E}.

\item In fact, the well-prepared requirement \eqref{The condition of E}  can be removed for a certain class of initial conditions. For instance, if the initial condition of \eqref{f} is exactly chosen as the local Maxwellian type as below:
\[
f^{\varepsilon, in}
=
(1+\rho_0^{in})M_{u_0^{in}},
\qquad
u^{\varepsilon,in}=u_0^{in},
\]
it directly implies
\[
\phi^{in}=\psi^{in}=b^{in}
=\mi g^{in}=0.
\]
Suppose the remaining time-derivative and weighted higher-order quantities are uniformly bounded, this gives \(\E(0)=O(\varepsilon)\), and hence satisfies \eqref{The condition of E}. 
Therefore, the initial requirement \eqref{The condition of E} is naturally satisfied, while allowing a possibly large, sufficiently regular macroscopic background profile.
\end{enumerate}
\end{remark}

%%%%%%%%%%%%%%%%%%%%%%%%%%%%%%%%%
\section{Proof of Theorem \ref{thm1}}
\label{sec:global}

This section is devoted to the proof of Theorem \ref{thm1}. To achieve this, we first present the local well-posedness of the system \eqref{g} in subsection \ref{subsec:local}. In subsection \ref{subsection:error}, our primary task is to derive the corresponding energy estimates. Subsequently, we will establish the bounds for the kinetic perturbation $g$ and its microscopic projection $(\mathbf{I-P})g$ in subsection \ref{subsec:micro-estimate}. Finally, by synthesizing these estimates, we obtain the uniform total energy bound required to close the proof of Theorem \ref{thm1} in subsection \ref{subsec:proof thm1}.

%----------------------
\subsection{Local well-posedness}
\label{subsec:local}

First, we give the local existence of the system \eqref{g}.

\begin{proposition}[Local well-posedness]\label{prop:local_existence}
	Under the conditions of Theorem \ref{thm1}, there exists a time $T^\varepsilon > 0$, depending only on $\varepsilon$ and the initial data, such that system \eqref{g} admits a unique strong solution $(g,u)$ on $[0, T^\varepsilon]$ with
	\begin{equation*}
	\begin{aligned}
	&g\in C([0,T^\varepsilon];H^4_xL^2_v\cap H^2_xH^2_v)\cap L^2(0,T^\varepsilon;H^4_x\mathcal{H}^0_v\cap H^2_x\mathcal{H}^2_v),\\
	&u\in C([0,T^\varepsilon];H^5_x)\cap L^2(0,T^\varepsilon;\mathcal{H}^5_x),\ \partial_tu\in C([0,T^\varepsilon];H^2_x)\cap L^2(0,T^\varepsilon;H^4_x).
	\end{aligned}
	\end{equation*}
\end{proposition}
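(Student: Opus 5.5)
The plan is to prove local well-posedness for fixed $\varepsilon>0$ by working with the original variables $(f^\varepsilon,u^\varepsilon)$ of \eqref{f} rather than $(g,u)$. For fixed $\varepsilon$ the singular factors $1/\varepsilon$ are harmless constants, and the VFP-INS system is a standard parabolic--kinetic coupling. The regularity of $(g,u)$ is then recovered from the relation $g=(f^\varepsilon-M_u)/\sqrt{M_u}$ and from the equations \eqref{g}. To avoid the moving weight in $v$, I would work with $h=f^\varepsilon/\sqrt{M_{u}}$, or equivalently with $g$ directly in the Hilbert space $L^2_v$. For this unknown, $\mathbf L$ is symmetric and nonnegative by Lemma \ref{lemma A2}, and it gives control of $\|\cdot\|_\nu$.

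The iteration scheme is as follows. Given $(g^n,u^n)$ with $u^n$ in the stated class, I define $u^{n+1}$ by solving the linear Stokes-type problem
\[
\partial_t u^{n+1}+u^n\cdot\nabla_x u^{n+1}+\nabla_x P^{n+1}-\Delta_x u^{n+1}=\frac{b^n}{\varepsilon},\qquad \div_x u^{n+1}=0 .
\]
I then define $g^{n+1}$ by solving the linear kinetic Fokker--Planck equation
\[
\partial_t g^{n+1}+v\cdot\nabla_x g^{n+1}+\frac{1}{\varepsilon}\mathbf L_{u^{n}} g^{n+1}=R_0(g^{n},u^{n}),
\]
where the Maxwellian in $\mathbf L$ is frozen at $u^n$. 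For each step, existence for the linear problems is classical: Galerkin or Friedrichs' method, or a vanishing $v$-regularization, closed by the energy estimates below.

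Next come uniform-in-$n$ bounds on a short time interval. I apply $\partial_x^\alpha$ with $|\alpha|\le4$, and also $\partial_x^\alpha\partial_v^\beta$ with $|\alpha|+|\beta|\le 2$ in the mixed $H^2_xH^2_v$ norm, then take $L^2_{x,v}$ inner products. The transport term $v\cdot\nabla_x$ vanishes. The term $\frac1\varepsilon\langle\mathbf L\partial^\alpha g,\partial^\alpha g\rangle$ gives the dissipation $\frac{c}{\varepsilon}\|\partial^\alpha g\|_\nu^2$ up to a lower-order $L^2$ term.

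The commutators $[\partial_x^\alpha,\mathbf L_{u}]$ and $[\partial_v^\beta, v\cdot\nabla_x]$ involve $\nabla_x^k u$ times polynomially weighted $g$. These are bounded by $C_\varepsilon(1+\|u\|_{H^5_x})^{m}\|g\|\,\|g\|_\nu$ and absorbed into the dissipation via Young's inequality. The terms in $R_0$ contain $(v-u)\partial_t u$ and $v\cdot\nabla_x u$ multiplied by $g$; their $v$-growth is again absorbed by the $\nu$-norm. For $u$, the standard $H^5$ parabolic estimate uses $\|b^n\|_{H^4}\lesssim \|g^n\|_{H^4_xL^2_v}$, and this yields $u\in L^2\mathcal H^5$ and $\partial_t u\in L^2H^4\cap CH^2$ directly from the equation. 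A Gronwall argument gives
\[
\mathcal N(t)\le \mathcal N(0)+C_\varepsilon t\,\big(1+\mathcal N(t)\big)^m .
\]
Hence the iterates stay in a ball on $[0,T^\varepsilon]$, where $T^\varepsilon$ depends on $\varepsilon$ and the initial norms.

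The remaining steps are contraction and uniqueness. Differences of iterates are estimated in the low norm $L^2_{x,v}\times L^2_x$, and the scheme contracts for small $T^\varepsilon$. Interpolation, weak lower semicontinuity, and time continuity (via the standard Lions--Magenes argument) give a solution in the stated spaces. Uniqueness follows from the same $L^2$ difference estimate, and the macroscopic equations for $(a,b)$ in \eqref{g} are then consequences by integration in $v$.

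I expect the main obstacle to be the $v$-derivative estimates at the $H^2_xH^2_v$ level with the moving Maxwellian. Here $\partial_v$ does not commute with $\mathbf L_u$; the commutator produces terms like $\partial_{v}\big((v-u)\cdot\big)$, and $\partial_x$ acting on $\sqrt{M_u}$ produces $(v-u)\nabla_x u$ weights. My first attempt would be to write $\mathbf L_u=-\Delta_v+\frac{|v-u|^2}{4}-\frac32$, whose commutators with $\partial_v$ are of first order and linear in $(v-u)$. These are then controlled by the $\nu$-norm of one fewer $v$-derivative, which is available from the lower-order step of an induction on $|\beta|$.
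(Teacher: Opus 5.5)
Your scheme (lagged linearization, energy bounds in $H^4_xL^2_v\cap H^2_xH^2_v$ for $g$ and $H^5_x$ for $u$, then contraction in a low norm) is a reasonable way to supply what the paper leaves to a citation. However, the step where the $v$-growth of $R_0$ is ``absorbed by the $\nu$-norm'' fails as stated.

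Write $v=(v-u)+u$. The last term of $R_0$, $-\frac12 g\,[(v\cdot\nabla_x)u]\cdot(v-u)$, contains $-\frac12 g\sum_{i,j}(v-u)_i(v-u)_j\partial_{x_j}u_i$. Passing to $h$ changes nothing, since the same term appears in \eqref{h u}. It comes from free transport acting on the $x$-dependent weight $M_u^{-1/2}$, and it grows like $|v-u|^2$. That is exactly the growth of the dissipation, because $\mathbf L=-\Delta_v+\frac{|v-u|^2}{4}-\frac32$.

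Tested against $g$, and likewise against $\partial_x^\alpha\partial_v^\beta g$ at top order, this term contributes up to $\frac12\|\nabla_xu\|_{L^\infty}\iint|v-u|^2g^2$. The dissipation $\frac1\varepsilon\langle\mathbf Lg,g\rangle$ only supplies $\frac1{4\varepsilon}\iint|v-u|^2g^2$.
\begin{itemize}
\item The bad term is quadratic in the dissipative norm, so Young's inequality does not help.
\item Shrinking $T^\varepsilon$ does not help either: the dissipation bounds $\int_0^T\|g\|_\nu^2$ only by $O(\varepsilon)$, uniformly in $T$.
\item In your lagged scheme this term gives a contraction factor governed by $\varepsilon\|\nabla_xu^n\|_{L^\infty}$, which does not improve as $T^\varepsilon\to0$.
\end{itemize}
Hence neither $\mathcal N(t)\leq\mathcal N(0)+C_\varepsilon t(1+\mathcal N(t))^m$ nor the contraction follows unless $\varepsilon\|\nabla_xu\|_{L^\infty}\leq c_0$ for a numerical constant $c_0<\frac12$. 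Your premise that ``for fixed $\varepsilon$ the factors $1/\varepsilon$ are harmless constants'' is exactly what breaks. The $1/\varepsilon$ multiplies the good term, and what matters is its size relative to $\|\nabla_xu\|_{L^\infty}$.

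This is not an artifact of the energy method. Take the model shear $u=(\kappa x_2,0,0)$ and substitute $f=F(t,v-u(x))$. The frozen kinetic equation becomes the Ornstein--Uhlenbeck equation $\partial_tF=\mathrm{div}_w\big((A+\varepsilon^{-1}I)wF\big)+\varepsilon^{-1}\Delta_wF$ with $A=\kappa\,e_1\otimes e_2$. A narrow bump centred at $w_0=\frac{R}{\sqrt2}(1,-1,0)$ then has its $L^2(M_u^{-1}\mathrm{d}v)$-norm multiplied by roughly $\exp\{cR^2t(\kappa-\varepsilon^{-1})\}$, which is unbounded in $R$ once $\varepsilon\kappa>1$. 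So without a smallness condition on $\varepsilon\|\nabla_xu\|_{L^\infty}$, you should not expect a local theory with $g\in L^2_v$ at all.

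\textbf{How to repair it.} The fix is available in the regime where the proposition is actually used. Under the conditions of Theorem~\ref{thm1}, $\|\nabla_xu^{in}\|_{L^\infty}\leq\|\nabla_xu_0^{in}\|_{L^\infty}+C\,\mathbb{E}(0)^{1/2}$ is bounded independently of $\varepsilon$. Hence $\varepsilon\|\nabla_xu^{in}\|_{L^\infty}\leq c_0/2$ for $\varepsilon\leq\varepsilon_0$. Add the constraint $\varepsilon\sup_{[0,T^\varepsilon]}\|\nabla_xu^n\|_{L^\infty}\leq c_0$ to the definition of your ball. It propagates on short intervals because $u^n\in C([0,T^\varepsilon];H^5_x)$, and the rest of your argument then goes through as described.

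\textbf{Comparison with the paper.} The paper gives only a remark pointing to the Goudon--He--Moussa--Zhang construction, which, as the paper itself notes, perturbs around the fixed Maxwellian $M(v)$. In that setting the only extra growth is $\frac{1}{2\varepsilon}(u\cdot v)g$, which is linear in $v$ and absorbable. But transferring that solution to $g=(f-M_u)/\sqrt{M_u}$ in the stated spaces requires an estimate in the moving frame, where the same restriction reappears. So smallness of $\varepsilon$ relative to the fixed limit profile has to enter the local theory either way.

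\textbf{A smaller point.} The space $H^2_xH^2_v$ requires $|\alpha|\leq2$ and $|\beta|\leq2$ separately, not $|\alpha|+|\beta|\leq2$. With the correct range, the commutator $[\partial_v^\beta,v\cdot\nabla_x]$ produces $\partial_x^3\partial_vg$. You can control it by $\|\partial_x^3\partial_vg\|^2\leq\|\partial_x^4g\|\,\|\partial_x^2\partial_v^2g\|$.
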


\begin{remark}
The local well-posedness argument is a standard adaptation of the construction developed for closely related Vlasov--Fokker--Planck--Navier--Stokes systems in \cite{Goudon2010}.
Indeed, for each fixed $\varepsilon>0$, the coefficients of order $\varepsilon^{-1}$ are finite and therefore cause no additional difficulty in the local construction, although the resulting lifespan and estimates may depend on $\varepsilon$.

\end{remark}

%-------------------------
\subsection{Macroscopic error estimates}
\label{subsection:error}

The key to proving Theorem \ref{thm1} is to obtain uniform energy estimates based on the following \textit{a priori} assumption:
\begin{equation}\label{assumption}
\sup_{t\in[0,T_1]}\mathbb{E}(t)\leq 2 \eta^2,
\end{equation}
where $\mathbb{E}(t)$ is the energy functional defined in \eqref{defE}, and $\eta>0$ is a sufficiently small constant independent of $\varepsilon$ to be determined in \eqref{eq:eta} and \eqref{eq:def eta}. Noting that $\E(0)\leq C\v ^\gamma$ and choosing
\begin{equation}\label{eq:varepsilon}
    C\varepsilon^\gamma\leq \eta^2,
\end{equation}
we have $T_1\in (0,T^{\v}_{max})$, where $ T^{\v}_{max}$ is the maximal existence time in Proposition \ref{prop:local_existence}.

In our proof, we repeatedly use the existence of solutions to the limit equation \eqref{limit} on $[0,T]$. We can therefore assume \(T_1\leq T\) and aim to apply the standard continuity argument to establish that Theorem \ref{thm1} holds on $[0,T]$.

Under the \textit{a priori} assumption \eqref{assumption}, we can derive the energy estimates for the macroscopic error terms.

\begin{proposition}\label{lem:ma g}	Under the conditions of Theorem \ref{thm1} and the \textit{a priori} assumption \eqref{assumption}, for $t\in [0,T_1] $, we have
\begin{equation}\label{eq:ma g}
        \begin{aligned}
        &\Hxxx{(\phi,\psi,b)}^2+\frac{\Hxx{b}^2}{\varepsilon}+\varepsilon\Hxx{\p_t\psi}^2+\varepsilon\Hxxx{\nabla_x\psi}^2+\varepsilon^2\HxxxLv{\nabla_xh}^2+\varepsilon^3\Hthreex{\nabla_x^2 u}^2\\
        &+\int_0^t\left( \Hxxx{\nabla_x\psi}^2+\varepsilon\Hxxxx{\nabla_x\psi}^2+\frac{\Hxxx{b}^2}{\varepsilon}+\Hxx{\p_tb}^2+\Hxx{\p_t\psi}^2+\varepsilon\Hxx{\nabla_x\p_t\psi}^2\right)\d s\\
        &+\intt{\left(\varepsilon^3\Hthreex{\nabla_x\pt u}^2+\sum\limits_{1\leq|\alpha|\leq4}\varepsilon(\Ltwox{b_\alpha}^2+\Lv{\mi \px^\alpha h}^2)\right)}{s}\\
     \leq&C(T)\E(0)+C(T)\varepsilon+C(T)\intt{\E(s)}{s}+C(T)\intt{\sum\limits_{|\theta|\leq3}\Lv{\px^\theta\mi g}^2}{s}\\
        \end{aligned}
        \end{equation}
         where the energy functional $\E(t)$ is given in \eqref{defE} and $C(T) > 1$ is a constant only depending on $T$.
	\end{proposition}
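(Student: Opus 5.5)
The proof is a sequence of $\varepsilon$-weighted $L^2$ energy estimates on the error system \eqref{eq:error}, coupled with the macroscopic equations \eqref{g-1}. Each one is carried out for $\partial_x^\alpha$, $|\alpha|\le 3$ (and $|\alpha|\le 4$ for the top weighted pieces), and the results are summed with suitable constants. Throughout, the background $(\rho_0,u_0)$ is treated as a variable coefficient. Under \eqref{eq:regu limit} it is bounded in $W^{3,\infty}$ on $[0,T]$, so every term linear in the errors with a background coefficient costs $C(T)\int_0^t\E\,ds$. The source $R_3$ vanishes at leading order by \eqref{limit}: $(\rho_0+1)\partial_t u_0+\dots$ combined with the $u_0$-equation leaves a pressure gradient plus $\partial_t u_0$, which is $O(1)$ but is tested against $b$ or $\psi$ with an $\varepsilon$ gain. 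This is the source of the $C(T)\varepsilon$ term. Nonlinear terms are handled by Sobolev embedding and the a priori bound \eqref{assumption}, with $\eta$ small. Terms carrying $\mi g$, such as $\langle v\cdot\nabla_x(\mi g\sqrt{M_u}),v\rangle_v$, are left on the right side as $\sum_{|\theta|\le3}\|\partial_x^\theta\mi g\|_\nu^2$.

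\textbf{Step 1: basic error energy.} Test $\eqref{eq:error}_1$ by $\partial_x^\alpha\phi$ and $\eqref{eq:error}_2$ by $\partial_x^\alpha b$. Test $\eqref{eq:error}_4$ by $\partial_x^\alpha\psi$; the pressure term drops since $\div_x\psi=0$. This gives $\frac{d}{dt}\|(\phi,\psi,b)\|^2_{H^3}$ together with the dissipation $\|\nabla\psi\|_{H^3}^2$ and $\varepsilon^{-1}\|b\|_{H^3}^2$. The cross terms $\nabla\phi\cdot b$ and $\div b\,\phi$ cancel. The term $\partial_t b\cdot\psi$ is rewritten as $\frac{d}{dt}\langle b,\psi\rangle-\langle b,\partial_t\psi\rangle$, which produces exactly the indefinite boxed form from Difficulty I. Testing $\eqref{eq:error}_2$ also by $\partial_t\partial_x^\alpha b$ at $|\alpha|\le2$ gives $\|\partial_t b\|^2_{H^2}$ and $\varepsilon^{-1}\frac{d}{dt}\|b\|^2_{H^2}$.

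\textbf{Step 2: Kelvin--Voigt structure (main obstacle).} From $\eqref{eq:error}_3$ we have $b=\varepsilon(\partial_t\psi-\Delta\psi+u\cdot\nabla\psi+\nabla(\cdot)-R_3+R_4)$. Differentiating in time and inserting into $\eqref{eq:error}_4$ gives the damped wave equation
\[
\varepsilon\partial_t^2\psi-\varepsilon\Delta\partial_t\psi+(a+2)\partial_t\psi-\Delta\psi=\cdots .
\]
Test $\partial_x^\alpha$ of it by $\partial_x^\alpha\partial_t\psi$ for $|\alpha|\le2$. This yields $\varepsilon\|\partial_t\psi\|_{H^2}^2+\|\nabla\psi\|_{H^2}^2$ as energy and $\|\partial_t\psi\|_{H^2}^2+\varepsilon\|\nabla\partial_t\psi\|_{H^2}^2$ as dissipation. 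The weight $a+2\ge 1$ is positive because $\rho_0\ge0$ and $\phi$ is small, so no smallness of $a$ is needed. Testing by $\partial_x^\alpha\psi$ as well gives $\varepsilon\|\nabla\psi\|_{H^3}^2$ and $\varepsilon\|\nabla\psi\|_{H^4}^2$. The hard part is controlling the commutators $[\partial_x^\alpha,a]\partial_t\psi$ and the time derivative $\partial_t a=-\div b-u\cdot\nabla a$, where $\|\partial_t a\|_{L^\infty}$ is bounded but not small. Here $\|\nabla\partial_t\psi\|_{H^2}$ is used to absorb the $|\alpha|=3$ piece of the boxed form in Step 1: its bad part $\langle\partial^\alpha b,\partial^\alpha\partial_t\psi\rangle$ is bounded by $\delta\varepsilon^{-1}\|b\|^2_{H^3}+C_\delta\varepsilon\|\nabla\partial_t\psi\|^2_{H^2}$. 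A large multiple of the Step 2 estimate is then added to Step 1.

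\textbf{Step 3: top-order weighted estimate (Idea II).} For $1\le|\alpha|\le4$, apply $\partial_x^\alpha$ to the $h$-equation, i.e.\ the VFP equation divided by $\sqrt{M_u}$. Test by $\varepsilon^2\partial_x^\alpha h$ and use the coercivity from Lemma \ref{lemma A2}, which gives $\varepsilon\|\mi\partial^\alpha h\|_\nu^2+\varepsilon\|b_\alpha\|^2$. Couple this with $\varepsilon^3$ times the $u$-equation tested by $\partial_x^\alpha\partial_t u$. The indefinite cross term $(\rho/2-1)\partial_t u\cdot b$ is removed by substituting $b$ from $\eqref{g}_4$, which yields the $\rho$-weighted energy $\varepsilon^3\|\sqrt\rho\,\partial^\alpha\nabla u\|^2$ and dissipation $\varepsilon^3\|\sqrt\rho\,\partial^\alpha\partial_t u\|^2$, coercive since $\rho\ge1/2$. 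The $\varepsilon^{-1}$ commutators from $x$-derivatives falling on $M_u$ and $\mathbf L$ are $\varepsilon^{-1}\nabla u\cdot(\ldots)$. The weight $\varepsilon^2$ reduces them to $\varepsilon\|\nabla u\|_{L^\infty}\|\partial h\|\|\partial^\alpha h\|$, which is bounded by $C(T)\E$. Finally, sum Steps 1--3 and use Gronwall-free integration to arrive at \eqref{eq:ma g}.
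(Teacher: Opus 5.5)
There is a genuine gap in Step 3, in the singular commutator $\varepsilon\,\langle \partial_x^\beta(\tfrac{|v-u|^2}{4})\,\partial_x^{\alpha-\beta}h,\partial_x^\alpha h\rangle$ that comes from \eqref{Lx gamma}. Your bound $\varepsilon\|\nabla_xu\|_{L^\infty}\|\partial h\|\|\partial^\alpha h\|\leq C(T)\mathbb{E}$ fails at the top order. At $|\alpha|=4$ write $\partial_x^\alpha h=\rho_\alpha\sqrt{M_u}+b_\alpha\cdot(v-u)\sqrt{M_u}+(\mathbf{I}-\mathbf{P})\partial_x^\alpha h$. The dissipation from Lemma \ref{lemma A2} controls only $\varepsilon\|b_\alpha\|^2+\varepsilon\|(\mathbf{I}-\mathbf{P})\partial_x^\alpha h\|_\nu^2$. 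The density component $\rho_\alpha$ is known only through $\varepsilon^2\|\partial_x^\alpha h\|^2\leq\mathbb{E}$. Meanwhile $\|\partial_x^3 h\|$ is $O(1)$, because the background is large rather than small. For $|\beta|=1$ the coefficient is also $-\frac12(v-u)\cdot\partial_x^\beta u$, which carries an unbounded velocity weight. So your bound only gives $\varepsilon\|\nabla_x u\|_{L^\infty}\|\partial_x^3 h\|\,\|\partial_x^4 h\|\lesssim C(T)\sqrt{\mathbb{E}}+\cdots$. The case $|\beta|=4$ is the same: $\varepsilon\|\nabla_x^4 u\|\,\|\partial_x^4 h\|\lesssim C(T)\sqrt{\mathbb{E}}$. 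Neither is of the form $C(T)(\mathbb{E}+\varepsilon)$, so you lose both the closure and the rate.

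The paper never pairs anything with $\partial_x^4 h$ here; it integrates by parts in $x$. For $|\beta|=1$ it uses $\partial_x^{\alpha-\beta}h\,\partial_x^\alpha h=\frac12\partial_x^\beta(\partial_x^{\alpha-\beta}h)^2$. For $|\beta|\geq 2$ it moves one derivative off $\partial_x^\alpha h$. What remains is quadratic in derivatives of $h$ of order at most $3$, with coefficients involving $\partial_x^2 u$. When $|\beta|=4$ the coefficient involves $\partial_x^5 u$, which is absorbed by $\delta\varepsilon\|\nabla_x^5\psi\|^2$. The macroscopic part is then bounded by Lemma \ref{lem:hx}, namely $\varepsilon\|\partial_x^\theta h\|^2\leq C(T)\varepsilon+C\mathbb{E}$ for $1\leq|\theta|\leq 3$. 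The $(v-u)$-weighted microscopic part is exactly what produces $\int_0^t\sum_{|\theta|\leq 3}\|\partial_x^\theta(\mathbf{I}-\mathbf{P})g\|_\nu^2\,\mathrm{d}s$ on the right-hand side of the proposition.

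Also in Step 3, the $b$ in $-\varepsilon^2\langle(\frac{\rho}{2}-1)\partial_x^\alpha\partial_t u,\partial_x^\alpha b\rangle$ must be eliminated with the fluid equation $\eqref{g}_2$. Its $-\Delta_x u$ term is what creates the $\rho$-weighted $\nabla_x u$ energy you describe. Taken literally, $\eqref{g}_4$ gives $b\approx-\varepsilon\rho\,\partial_t u-\varepsilon\partial_t b-\varepsilon\nabla_x a$. That puts $+\varepsilon^3\int(\frac{\rho}{2}-1)\rho|\partial_x^\alpha\partial_t u|^2$ on the right, which is not coercive for large $\rho$. It also introduces $\partial_x^\alpha\partial_t b$ and $\partial_x^\alpha\nabla_x a$ with $|\alpha|=4$, which nothing controls.

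Your Steps 1--2, by contrast, are a legitimate variant of the paper. The paper weights the $\phi$- and $b$-tests by $1/(a+1)$, substitutes $\partial_t b$ in the $\psi$-test, and adds the test of $\partial_x^\alpha\eqref{eq:error}_3$ against $\varepsilon\partial_x^\alpha\partial_t\psi$ (Lemma \ref{lem:psitH3}). This makes $\langle\partial_x^\alpha\partial_t\psi,\partial_x^\alpha b\rangle$ and $\varepsilon\|\partial_x^\alpha\partial_t\psi\|^2$ cancel exactly for $|\alpha|\leq 3$. The Kelvin--Voigt dissipation is then needed only for $\delta$- or $\varepsilon^2$-weighted remainders. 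Your Cauchy--Schwarz absorption into a large multiple of the $|\alpha|\leq 2$ Kelvin--Voigt estimate also closes. That estimate's right side involves only $\mathbb{E}(0)$, $\varepsilon$, $\int_0^t\mathbb{E}$ and $\int_0^t\|(\mathbf{I}-\mathbf{P})g\|^2_{H^3_xL^2_v}$.

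Three smaller corrections are needed in Steps 1--2. First, $\varepsilon\int_0^t\|\nabla_x\psi\|_{H^4}^2$ does not come from testing the damped wave equation by $\partial_x^\alpha\psi$; that test gives $\int_0^t\|\nabla_x\psi\|_{H^3}^2$. It comes from $\partial_x^\alpha\eqref{eq:error}_3$ with $|\alpha|=4$ tested by $\varepsilon\partial_x^\alpha\psi$, at the cost of $\int_0^t\varepsilon^{-1}\|\nabla_x^3 b\|^2$ (Lemma \ref{lem:psiH4}). Second, $R_3$ does not vanish at leading order. Paired with $\partial_t b$, it needs an integration by parts in time to yield $C(T)\varepsilon$ instead of $C(T)$. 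Third, the $|\alpha|=3$ estimate for $b$ produces $\varepsilon\|(\mathbf{I}-\mathbf{P})g\|^2_{H^4_xL^2_v}$. This cannot be left at order $3$; it must be fed into the Step 3 dissipation through \eqref{eq:mi H4}.
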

The estimate in Proposition \ref{lem:ma g} is a direct combination of Lemmas \ref{lem:phiH3}--\ref{lem:hH4} and Corollaries \ref{lem:phi b psi psit}--\ref{coro:ma total}. In what follows, we will present these lemmas and corollaries with proofs, and provide the proof of Proposition \ref{lem:ma g} at the end of this subsection.
%%%%%%%%%%%%%%%%%%%%%%%%%%%%%%%%%%%%%%%%%%%%	
\begin{lemma}\label{lem:phiH3}
  Under the conditions of Theorem \ref{thm1} and the \textit{a priori} assumption \eqref{assumption}, for $t\in [0,T_1] $, we have
  	\begin{equation}\label{eq:phiH3}
  	\begin{aligned}
  	\sum\limits_{ |\a|\leq 3}\left( \frac{1}{2}\dt{t}\intx{\frac{|\p_x^\a \phi|^2}{a+1}}+\intx{\frac{\p_x^\a\div_xb\,\p_x^\a\phi}{a+1}} \right)\leq C(T)\E(t),
  	\end{aligned}
  	\end{equation}
    where the energy functional $\E(t)$ is given in \eqref{defE} and $C(T) > 1$ is a constant only depending on $T$.
  \end{lemma}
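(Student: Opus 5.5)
We apply $\p_x^\a$, $|\a|\le 3$, to the first equation of \eqref{eq:error}, multiply by $\p_x^\a\phi/(a+1)$ and integrate in $x$. Note $a+1=\rho$, where $a=\rho_0+\phi$ with $0\le\rho_0\le\overline\rho$. By the a priori assumption \eqref{assumption} and Sobolev embedding, $\Linf{\phi}\le C\eta$, so $\frac12\le a+1\le \overline\rho+2$ for $\eta$ small. Hence the weight $1/(a+1)$ is bounded above and below.

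\textbf{Time derivative.} It produces
\[
\frac12\dt{t}\intx{\frac{|\p_x^\a\phi|^2}{a+1}}+\frac12\intx{\frac{\p_t a}{(a+1)^2}|\p_x^\a\phi|^2}.
\]
Using $\p_t a=-\div_x b-u\cdot\n_x a$ from \eqref{g}$_3$, the commutator term is bounded by $\|\p_t a\|_{L^\infty_x}\Lx{\p_x^\a\phi}^2$. Write $u=u_0+\psi$ and $a=\rho_0+\phi$; then $\|\p_ta\|_{L^\infty}\le C(T)$ by \eqref{eq:regu limit} together with smallness of $\E$. So this term is $\le C(T)\E(t)$.

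\textbf{Transport terms.} There are three:
\begin{enumerate}
\item $\p_x^\a(\n_x\phi\cdot u_0)$: the top-order piece $u_0\cdot\n_x\p_x^\a\phi\,\p_x^\a\phi/(a+1)$ is integrated by parts. Since $\div_x u_0=0$, only $\n_x(1/(a+1))$ appears, which is bounded by $C(T)$ because $\n_x\rho_0\in L^\infty$ and $\Hxxx{\phi}$ is small. The commutators $\p_x^\b u_0\,\p_x^{\a-\b}\n_x\phi$ are controlled by $\|u_0\|_{H^6_x}\Hxxx{\phi}$.
\item $\p_x^\a(\n_x\phi\cdot\psi)$: handled the same way, using $\div_x\psi=0$ for the top-order piece. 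The commutators are bounded by $\Hxxx{\psi}\Hxxx{\phi}$, using $L^\infty$–$L^2$ and $L^4$–$L^4$ splits with $\Hxxx{\psi}\le C\eta$.
\item $\p_x^\a(\n_x\rho_0\cdot\psi)$: this contains no derivative of $\phi$ beyond order 3 and is bounded by $C(T)\Hxxx{\psi}\Hxxx{\phi}$.
\end{enumerate}
Each contribution is $\le C(T)\E(t)$.

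\textbf{The $b$ term.} The term $\intx{\p_x^\a\div_xb\,\p_x^\a\phi/(a+1)}$ is exactly the one kept on the left of \eqref{eq:phiH3}, so nothing needs to be estimated there.

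The only delicate point is the top-order transport term. Four derivatives on $\phi$ are not controlled by $\E$, so the cancellation from integrating by parts against the divergence-free fields $u_0$ and $\psi$ is essential. We also need the weight's gradient to be bounded, which uses that $\rho_0$ is large but smooth rather than small. Summing over $|\a|\le3$ yields \eqref{eq:phiH3}.
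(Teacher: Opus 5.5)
Your proposal is correct and follows the paper's proof essentially step by step. You use the same multiplier $\partial_x^\alpha\phi/(a+1)$ and the same $L^\infty$ bound on $\partial_t a$ from the transport equation for $a$. You integrate by parts against the divergence-free fields $\psi$ and $u_0$ for the top-order transport terms, and bound the $\nabla_x\rho_0\cdot\psi$ term directly. You are slightly more careful than the paper on the $u_0\cdot\nabla_x\partial_x^\alpha\phi$ piece, whose integration by parts the paper's bound for $I_{14}$ uses only implicitly.
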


  \begin{proof}	
  	Applying $\partial_{x}^{\alpha}$ ($|\alpha|\leq 3$) to  $\eqref{eq:error}_1$, multiplying by $\frac{\partial_{x}^{\alpha}\phi}{a+1}$, and integrating by parts, we obtain
  	\begin{equation}\label{eq:lem phi}
  	\begin{aligned}
  	&\frac{1}{2}\dt{t}\intx{\frac{|\p_x^\a \phi|^2}{a+1}}+\intx{\frac{\p_x^\a\div_xb\,\p_x^\a\phi}{a+1}}\\[5pt]
  	=&\underbrace{-\intx{\frac{|\p_x^\a \phi|^2\p_ta}{(a+1)^2}}}_{I_{11}}\underbrace{-\intx{\frac{\p_x^\a(\nabla_x\phi\cdot\psi)\p_x^\a \phi}{a+1}}}_{I_{12}}\underbrace{-\intx{\frac{\p_x^\a(\nabla_x\rho_0\cdot\psi)\p_x^\a \phi}{a+1}}}_{I_{13}}\\[5pt]
  	&\underbrace{-\intx{\frac{\p_x^\a(\nabla_x\phi\cdot u_0)\p_x^\a \phi}{a+1}}}_{I_{14}}.
  	\end{aligned}
  	\end{equation}  	
  	For $I_{11}$, substituting $\eqref{g}_3$ and considering $\phi=a-\rho_0,\,\psi=u-u_0$, we have
  	\begin{equation}\label{eq:a_t Linfty}
  	\begin{aligned}
  	\Linf{\p_ta}\leq& \Linf{\div_xb}+\Linf{u\cdot\nabla_x a}\\[5pt]
  	\leq& C\Hxxx{b}+\Hxx{(\psi+u_0)\cdot\nabla_x(\phi+\rho_0)}\\[5pt]
  	\leq &C(T)+C\E(t).
  	\end{aligned}
  	\end{equation}	

    Since $a=\phi+\rho_0$, we can combine $0 < 1 \leq \rho_0 +1 \leq \overline{\rho}+1$ (See \eqref{eq:regu limit}) with the \textit{a priori} assumption \eqref{assumption} to obtain
  	\begin{equation}\label{eq:a lower bound}
  	a+1=\phi+\rho_0+1\in \left(\frac{1}{2}, \overline{\rho}+\frac{3}{2}\right),
  	\end{equation}
    where, we select \(\eta>0\) sufficiently small such that
    \begin{equation}\label{eq:eta}
        \Linftyx{\phi}\leq C\Hxx{\phi}\leq C\sqrt{\E(t)}\leq C\eta \leq \frac{1}{4}.
    \end{equation}
  	Then, we have
  	\begin{equation}\label{eq:I11}
  	I_{11}=-\intx{\frac{|\p_x^\a \phi|^2\p_ta}{(a+1)^2}}\leq  C\Linftyx{\pt a}\Ltwox{\px^\a \phi}^2\leq C(T)\E(t).
  	\end{equation}

  	For $I_{12}$, integrating by parts yields
  	\begin{equation}\label{eq:I12}
  	\begin{aligned}
  	I_{12}=&-\intx{\frac{\p_x^\a(\nabla_x\phi\cdot\psi)\p_x^\a \phi}{a+1}}\\[5pt]
  	=&-\intx{\frac{\p_x^\a\nabla_x\phi\cdot\psi\,\p_x^\a \phi}{a+1}}-\sum\limits_{\b\leq\a,|\b|\neq 0}C^\b_\a\intx{\frac{\p_x^{\a-\b}\nabla_x\phi\cdot\p_x^\b\psi\,\p_x^\a \phi}{a+1}}\\[5pt]
  	=&\intx{\frac{\div_x\psi |\p_x^\a \phi|^2}{2(a+1)}}-\intx{\frac{\psi\cdot \nabla_x a |\p_x^\a \phi|^2}{2(a+1)^2}}-\sum\limits_{\b\leq\a,|\b|\neq 0}C^\b_\a\intx{\frac{\p_x^{\a-\b}\nabla_x\phi\cdot\p_x^\b\psi\,\p_x^\a \phi}{a+1}}\\[5pt]
    \leq&C(\Hthreex{\psi}+\Hthreex{\psi}\Linftyx{\nabla_x a})\Hthreex{\phi}^2\\[5pt]
  	\leq& C(T)\E(t),
  	\end{aligned}
  	\end{equation}
  	where we use \eqref{assumption} and the fact that
    \begin{equation*}
  	\begin{aligned}
  	\Linf{\nabla_x a}\leq \Linf{\nabla_x\rho_0+\nabla_x\phi}\leq C(T)+C\E(t).
  	\end{aligned}
  	\end{equation*}
  	Similarly, for $I_{13}$ and $I_{14}$, we have
  	\begin{equation}\label{eq:I13 14}
  	\begin{aligned}
  	I_{13}+I_{14}=&-\intx{\frac{\p_x^\a(\nabla_x\rho_0\cdot\psi)\p_x^\a \phi}{a+1}}-\intx{\frac{\p_x^\a(\nabla_x\phi\cdot u_0)\p_x^\a \phi}{a+1}}\\[5pt]
    \leq&C\Hfourx{\rho_0}\Hthreex{\psi}\Hthreex{\phi}+C\Hthreex{u_0}\Hthreex{\phi}^2\\[5pt]
  	\leq& C(T)\E(t).
  	\end{aligned}
  	\end{equation}
  	
  	Finally, collecting the estimates \eqref{eq:lem phi}, \eqref{eq:I11}, \eqref{eq:I12}, \eqref{eq:I13 14}, and summing over $|\alpha| \leq 3$, we obtain the estimate \eqref{eq:phiH3}.
  \end{proof}

Having established the energy inequality for $\phi$, we now proceed to derive the energy estimate for $\psi$.
	\begin{lemma}\label{lem:psiH3}
    Under the conditions of Theorem \ref{thm1} and the \textit{a priori} assumption \eqref{assumption}, for $t\in [0,T_1] $, we have
		\begin{equation}\label{eq:psiH3}
		\begin{aligned}
		&\sum\limits_{|\a|\leq 3}\left( \frac{1}{2}\dt{t}\intx{(a+2)|\p_x^\a\psi|^2}+\intx{|\n_x\p_x^\a\psi|^2} \right) \\
		&+\sum\limits_{|\a|\leq 3}\left( \varepsilon\dt{t}\intx{\p_x^\a\p_t\psi\cdot\p_x^\a\psi}-\varepsilon\intx{|\p_x^\a\p_t\psi|^2}+\frac{\varepsilon}{2}\dt{t}\intx{|\p_x^\a\nabla_x\psi|^2}\right) \\[5pt]
		\leq&\ \delta\Hxxx{\nabla_x\psi}^2+\delta\Hxx{\p_t\psi}^2+C_\delta(T)\E(t)+C_\delta(T)\Hxxxv{\mi g}^2\\[5pt]
		&+C_\delta(T)\varepsilon^2\Hxxx{\p_t\psi}^2+C_\delta(T)\varepsilon^2,
		\end{aligned}
		\end{equation}
        where the energy functional $\E(t)$ is given in \eqref{defE} and $C_\delta(T) > 1$ is a constant only depending on $\delta,\ T$.
	\end{lemma}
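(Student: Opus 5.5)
We work with the fourth equation of \eqref{eq:error}, the one in which the singular term $b/\v$ has been eliminated:
\[
(a+2)\p_t \psi+(a+2)u\cdot\n_x\psi+\n_x(P+a-P_0)-\Delta_x\psi+\p_t b =-R_2-R_4 .
\]
Apply $\px^\a$ with $|\a|\leq 3$ and test against $\px^\a\psi$. The pressure gradient drops out because $\div_x\px^\a\psi=0$. The Laplacian gives $\Ltwox{\n_x\px^\a\psi}^2$.

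For the term with $(a+2)$, write
\[
\px^\a\big((a+2)\p_t\psi\big)=(a+2)\px^\a\p_t\psi+[\px^\a,a]\p_t\psi .
\]
The principal part gives $\frac12\dt{t}\intx{(a+2)|\px^\a\psi|^2}$, plus a term with $\p_t a$. That term is bounded by $C(T)\E$ using \eqref{eq:a_t Linfty}. The commutator contains $\px^\beta a\,\px^{\a-\beta}\p_t\psi$ with $|\beta|\geq 1$. Since $a=\rho_0+\phi$ with $\rho_0\in H^6$, this is bounded by
\[
C(T)\Hxx{\p_t\psi}\Hxxx{\psi}+\Hxxx{\phi}\Hxx{\p_t\psi}\Hxxx{\psi}.
\]
When $|\beta|=1$ the derivative on $\p_t\psi$ is of order two, so only $H^2$ of $\p_t\psi$ is needed. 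Young's inequality then gives $\delta\Hxx{\p_t\psi}^2+C_\delta(T)\E$. The transport term $(a+2)u\cdot\n_x\psi$ is handled as in Lemma \ref{lem:phiH3}. Its top-order part is integrated by parts using $\div_x u=0$, which leaves a term with $\n_x a$; the lower-order commutators are bounded with $u=u_0+\psi$. The $\n_x a$ term reduces to $\n_x\phi$ plus an $H^4$ term of $\rho_0$, and is integrated by parts onto $\div_x\px^\a\psi=0$ or absorbed via $\delta\Hxxx{\n_x\psi}^2$. The right-hand side $R_2+R_4$ is estimated directly. The only genuinely microscopic piece is $\l v\cdot\n_x(\mi g\M),v\r_v$. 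Moving one $x$-derivative onto $\px^\a\psi$ yields $\delta\Hxxx{\n_x\psi}^2+C_\delta\Hxxxv{\mi g}^2$. The remaining pieces of $R_2$ are quadratic in $(\phi,\psi,b)$ times $C(T)$ and contribute $C(T)\E$.

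\textbf{The main obstacle is the term $\intx{\px^\a\p_t b\cdot\px^\a\psi}$.} It cannot be handled naively, because the energy only controls $\p_t b$ in a time-integrated sense. Following Idea I, we substitute for $b$ using the third equation of \eqref{eq:error}:
\[
b=\v\big(\p_t\psi+u\cdot\n_x\psi+\n_x(P+\rho_0-P_0)-\Delta_x\psi-R_3+R_4\big).
\]
Then
\[
\intx{\px^\a\p_t b\cdot\px^\a\psi}=\dt{t}\intx{\px^\a b\cdot\px^\a\psi}-\intx{\px^\a b\cdot\px^\a\p_t\psi},
\]
and we insert the expression for $b$ in both terms. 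The first term gives $\v\dt{t}\intx{\px^\a\p_t\psi\cdot\px^\a\psi}$ plus $\frac{\v}{2}\dt{t}\Ltwox{\px^\a\n_x\psi}^2$; the latter comes from the $-\v\Delta_x\psi$ piece paired with $\p_t\psi$ after integration by parts. The second term produces $-\v\Ltwox{\px^\a\p_t\psi}^2$. These are exactly the left-hand side terms in \eqref{eq:psiH3}.

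In these identities the pressure terms vanish against the divergence-free $\psi$ and $\p_t\psi$. The remaining contributions are of two kinds:
\begin{itemize}
\item $\v$ times transport terms, such as $\v\,\px^\a(u\cdot\n_x\psi)\cdot\px^\a\p_t\psi$;
\item $\v$ times the source terms $R_3$ and $R_4$.
\end{itemize}
By Young's inequality these are bounded by $C_\delta(T)\v^2\Hxxx{\p_t\psi}^2+C_\delta(T)\v^2+\delta\Hxxx{\n_x\psi}^2+C(T)\E$. Here we use that $R_3$ is an $\v$-independent smooth function of the limit profile, which is where the pure $C_\delta(T)\v^2$ term comes from. The total time derivative $\dt{t}\intx{\px^\a b\cdot\px^\a\psi}$ is then re-expressed through these $\v$-weighted quantities. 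Summing over $|\a|\leq3$ gives \eqref{eq:psiH3}.

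Care is needed because $\Hxxx{\p_t\psi}$ is not part of $\E$. Every top-order occurrence of $\p_t\psi$ must therefore be kept with an $\v^2$ weight, or routed through the $-\v\Ltwox{\px^\a\p_t\psi}^2$ structure, and never estimated by $\E$ alone.
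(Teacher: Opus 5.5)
Your overall plan matches the paper's: test the $\partial_x^\alpha$-differentiated $\eqref{eq:error}_4$ against $\partial_x^\alpha\psi$, and remove $\partial_t b$ using $\eqref{eq:error}_3$. Your handling of $(a+2)\partial_t\psi$, of the commutator that needs only $\|\partial_t\psi\|_{H^2_x}$, and of the microscopic piece of $R_2$ is also the paper's. The gap is in how you account for the $\partial_t b$ term.

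You split $\int\partial_x^\alpha\partial_t b\cdot\partial_x^\alpha\psi=\frac{d}{dt}\int\partial_x^\alpha b\cdot\partial_x^\alpha\psi-\int\partial_x^\alpha b\cdot\partial_x^\alpha\partial_t\psi$, insert $b=\varepsilon(\cdots)$, and bound the leftover pieces paired with $\partial_x^\alpha\partial_t\psi$ by Young. For the source, this leftover is $\varepsilon\int\partial_x^\alpha R_3\cdot\partial_x^\alpha\partial_t\psi\,\mathrm{d}x$.
\begin{itemize}
\item $R_3$ is not small: it is a fixed $O(1)$ function of the limit profile, essentially the limiting drag.
\item $\|\partial_t\psi\|_{H^3_x}$ is not controlled by $\mathbb{E}$.
\end{itemize}
Any Young splitting of $\varepsilon\|R_3\|_{H^3_x}\|\partial_t\psi\|_{H^3_x}$ therefore leaves either an $O(1)$ constant, as in $\varepsilon^2\|\partial_t\psi\|^2_{H^3_x}+\frac14\|R_3\|^2_{H^3_x}$, or $\delta\varepsilon\|\partial_t\psi\|^2_{H^3_x}+C_\delta(T)\varepsilon$. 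Neither is the claimed $C_\delta(T)\varepsilon^2\|\partial_t\psi\|^2_{H^3_x}+C_\delta(T)\varepsilon^2$. You also cannot route it through $-\varepsilon\|\partial_x^\alpha\partial_t\psi\|^2_{L^2_x}$: that term has the bad sign and is compensated only later, by Lemma~\ref{lem:psitH3}. Finally, your re-expressed total derivative still contains
\[
\varepsilon\frac{d}{dt}\int\partial_x^\alpha(u\cdot\nabla_x\psi)\cdot\partial_x^\alpha\psi\,\mathrm{d}x-\varepsilon\frac{d}{dt}\int\partial_x^\alpha(R_3-R_4)\cdot\partial_x^\alpha\psi\,\mathrm{d}x,
\]
and neither term appears on the left of \eqref{eq:psiH3}.

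The fix is to expand these two total derivatives by the product rule. The pairings with $\partial_x^\alpha\partial_t\psi$ then cancel exactly, leaving
\[
\varepsilon\int\partial_x^\alpha\psi\cdot\partial_x^\alpha\partial_t(u\cdot\nabla_x\psi)\,\mathrm{d}x
\quad\text{and}\quad
-\varepsilon\int\partial_x^\alpha\psi\cdot\partial_x^\alpha\partial_t(R_3-R_4)\,\mathrm{d}x .
\]
This is exactly what the paper gets by substituting the time-differentiated $\eqref{eq:error}_3$ directly for $\partial_t b$. Now the time derivative sits on $R_3$, and its partner is $\psi$ rather than $\partial_t\psi$.

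Move one spatial derivative onto $\partial_x^\alpha\psi$; this is needed anyway, since $\partial_t^2u_0$ is only in $C([0,T];H^2_x)$. The $R_3$ part is then bounded by $\varepsilon\|\nabla_x\psi\|_{H^3_x}\|\partial_tR_3\|_{H^2_x}\le\delta\|\nabla_x\psi\|^2_{H^3_x}+C_\delta(T)\varepsilon^2$. That is the actual source of the $C_\delta(T)\varepsilon^2$ in \eqref{eq:psiH3}. In the transport term, the same spatial integration by parts keeps at most three derivatives on $\partial_t\psi$. It gives $\delta\|\nabla_x\psi\|^2_{H^3_x}+C_\delta(T)\mathbb{E}+C_\delta(T)\varepsilon^2\|\partial_t\psi\|^2_{H^3_x}$.

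Your split would still close the overall scheme in time-integrated form. The extra energy terms are $O(\varepsilon\|\psi\|_{H^3_x}+\varepsilon\|\psi\|^2_{H^3_x})$, and $\delta\varepsilon\|\partial_t\psi\|^2_{H^3_x}$ is controlled by the dissipation in Lemma~\ref{lem:bt psit}. It does not, however, prove \eqref{eq:psiH3} as stated.

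A smaller point concerns $R_2$. At $|\alpha|=3$, the top-order parts of $u\cdot\nabla_x b$ and $b\cdot\nabla_x u$ carry four derivatives of $b$ and of $\psi$ respectively, so they are not directly $C(T)\mathbb{E}$. For $u\cdot\nabla_x b$, use $\mathrm{div}_x u=0$ to move the extra derivative onto $\partial_x^\alpha\psi$. Both terms then contribute $\delta\|\nabla_x\psi\|^2_{H^3_x}+C_\delta(T)\mathbb{E}$.
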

	\begin{proof}	
    Applying $\partial_{x}^{\alpha}$ ($|\alpha|\leq 3$) to $\eqref{eq:error}_4$, multiplying by $\partial_{x}^{\alpha}\psi$, and integrating by parts, we obtain
		\begin{equation}\label{eq:psiH3-1}
		\begin{aligned}
		&\frac{1}{2}\dt{t}\intx{(a+2)|\p_x^\a\psi|^2}+\intx{|\n_x\p_x^\a\psi|^2}+\underbrace{\intx{\p_x^\a\p_t b\cdot\p_x^\a\psi}}_{I_{20}} \\
		=&\underbrace{\frac{1}{2}\intx{\p_t a|\p_x^\a\psi|^2}}_{I_{21}}\underbrace{-\sum\limits_{\b\leq\a,|\b|\neq 0}C^\b_\a\intx{\p_x^\b a\,\p_x^{\a-\b}\p_t\psi\cdot\p_x^\a\psi}}_{I_{22}} \\
		&\underbrace{-\intx{\p_x^\a\big((a+2)u\cdot\n_x\psi\big)\cdot\p_x^\a\psi}}_{I_{23}}\underbrace{-\intx{\p_x^\a R_2\cdot\p_x^\a\psi}}_{I_{24}}\underbrace{-\intx{\p_x^\a R_4\cdot\p_x^\a\psi}}_{I_{25}}.
		\end{aligned}
		\end{equation}
		To estimate $I_{20}$, we use $\eqref{eq:error}_3$ to write $\p_t b$ as:
		\begin{equation}\label{eq:bt}
		\p_t b=\varepsilon \p_{tt}\psi-\varepsilon\Delta_x\p_t\psi+\varepsilon\p_t(u\cdot\nabla_x\psi)+\varepsilon\p_t\nabla_x(P+\rho_0-P_0)-\varepsilon\p_tR_3+\varepsilon\p_tR_4.
		\end{equation}
		Substituting \eqref{eq:bt} into $\intx{\p_x^\a\p_t b\cdot\p_x^\a\psi}$ yields
		\begin{equation}\label{eq:psi bt-1}
		\begin{aligned}
		&\intx{\p_x^\a\p_t b\cdot\p_x^\a\psi} \\
		=&\intx{\p_x^\a\psi\cdot \p_x^\a\left(\varepsilon \p_{tt}\psi-\varepsilon\Delta_x\p_t\psi+\varepsilon\p_t(u\cdot\nabla_x\psi)-\varepsilon\p_tR_3+\varepsilon\p_tR_4 \right) } \\
		=&\varepsilon\dt{t}\intx{\p_x^\a\p_t\psi\cdot\p_x^\a\psi}-\varepsilon\intx{|\p_x^\a\p_t\psi|^2}+\frac{\v}{2}\dt{t}\intx{|\p_x^\a\nabla_x\psi|^2}\\
		&+\underbrace{\varepsilon\intx{\p_x^\a\psi\cdot \p_x^\a\p_t(u\cdot\nabla_x\psi) }}_{N_{11}}+\underbrace{\varepsilon\intx{\p_x^\a\psi\cdot \p_x^\a\left(-\p_tR_3+\p_tR_4 \right) }}_{N_{12}}.
		\end{aligned}
		\end{equation}
		For $N_{11}$, integrating by parts and using $u=\psi+u_0$, we obtain, for $|\a'|=1$
		\begin{equation}\label{eq:J11}
		\begin{aligned}
		\left|N_{11}\right|=&\left|\varepsilon\intx{\p_x^\a\psi\cdot \p_x^\a\p_t(u\cdot\nabla_x\psi) }\right|\\
		\leq&\left|\varepsilon\intx{\psi\cdot \p_t(u\cdot\nabla_x\psi) }\right|+\sum\limits_{1\leq |\a|\leq 3}\left|\varepsilon\intx{\p_x^{\a+\a'}\psi\cdot \p_x^{\a-\a'}\p_t(u\cdot\nabla_x\psi) }\right|\\
        \leq &C\v\Hthreex{\psi}\Hthreex{\nabla_x \psi}\Big(\Hthreex{\pt\psi}+\Hthreex{\pt u_0}\Big)\\
        &\qquad \qquad\qquad\qquad\qquad \qquad+C\v \Hthreex{\pt\psi}\Big(\Hthreex{\psi}+\Hthreex{u_0}\Big) \Hthreex{\nabla\psi}  \\[5pt]
		\leq&\ C_\delta(T)\E(t)+\delta\Hxxx{\nabla_x\psi}^2+C_\delta\varepsilon^2(C(T)+\Hxxx{\psi}^2)\Hxxx{\p_t\psi}^2\\[5pt]
		\leq&\ \delta\Hxxx{\nabla_x\psi}^2+C_\delta(T)\E(t)+C_\delta(T)\varepsilon^2\Hxxx{\p_t\psi}^2.
		\end{aligned}
		\end{equation}
		For $N_{12}$, integrating by parts, we have
		\begin{equation}\label{eq:J12}
		\begin{aligned}
		\left|N_{12}\right|=&\left|\varepsilon\intx{\p_x^\a\psi\cdot \p_x^\a\left(-\p_tR_3+\p_tR_4 \right) }\right|\\[5pt]
		=&\left|\varepsilon\intx{\p_x^\a\psi\cdot \p_x^\a\p_t
        \left( (\rho_0+1)\p_t u_0+(\rho_0+1)u_0\cdot\n_xu_{0}+\n_x\rho_0-\psi\cdot\nabla_xu_0 \right) }\right|\\[5pt]
		\leq&\ C(T)\E(t)+C_\delta(T)\varepsilon^2+\delta\Hxxx{\nabla_x\psi}^2+C_\delta(T)\v^2\Hthreex{\pt \psi}^2,
		\end{aligned}
		\end{equation}
        where we have used
        \begin{equation*}
        \begin{aligned}
        &\left|\v\intx{\p_x^\a\psi\cdot \p_x^\a\p_t\left(\psi\cdot\nabla_xu_0 \right) }\right|\\
        =&\left|\v\intx{\p_x^{\a}\nabla_x\psi:\p_x^\a\p_t\left(\psi\otimes u_0 \right) }\right|\\[5pt]
        \leq&\delta \Hthreex{\n_x\psi}^2+C_\delta\v^2\Hthreex{\psi}^2\Hthreex{\pt u_0}^2+C_\delta\v^2\Hthreex{u_0}^2\Hthreex{\pt \psi}^2\\[5pt]
        \leq& \delta \Hthreex{\n_x\psi}^2+C_\delta(T)\v^2+C_\delta(T)\v^2\Hthreex{\pt \psi}^2.
                \end{aligned}
        \end{equation*}
		Therefore, combining \eqref{eq:psi bt-1}, \eqref{eq:J11}, and \eqref{eq:J12}, we get
		\begin{equation}\label{eq:psi bt}
		\begin{aligned}
		\intx{\p_x^\a\p_t b\cdot\p_x^\a\psi}
		\geq&\ \varepsilon\dt{t}\intx{\p_x^\a\p_t\psi\cdot\p_x^\a\psi}-\varepsilon\intx{|\p_x^\a\p_t\psi|^2}+\frac{\v}{2}\dt{t}\intx{|\p_x^\a\nabla_x\psi|^2}\\[5pt]
		&-\left(\delta\Hxxx{\nabla_x\psi}^2+C_\delta (T)\E(t)+C_\delta(T)\varepsilon^2\Hxxx{\p_t\psi}^2+C_\delta (T)\varepsilon^2 \right).
		\end{aligned}
		\end{equation}
		
		Next, we estimate the terms on the right-hand side of \eqref{eq:psiH3-1}. For $I_{21}$, using \eqref{eq:a_t Linfty},
		\begin{equation}\label{eq:I21}
		I_{21}=\frac{1}{2}\intx{\p_t a|\p_x^\a\psi|^2}\leq C\Linftyx{ \pt a}\Hthreex{\psi}^2\leq C(T)\E(t).
		\end{equation}	
		For $I_{22}$, noting that $a=\phi+\rho_0$, we have	
		\begin{equation}\label{eq:I22}
		\begin{aligned}
		I_{22}=&-\sum\limits_{\b\leq\a,|\b|\neq 0}C^\b_\a\intx{\p_x^\b a\,\p_x^{\a-\b}\p_t\psi\cdot\p_x^\a\psi}\\
		=&-\sum\limits_{\b\leq\a,|\b|\neq 0}C^\b_\a\intx{\p_x^\b (\phi+\rho_0)\,\p_x^{\a-\b}\p_t\psi\cdot\p_x^\a\psi}\\[5pt]
		\leq&\ C(\Hxxx{\phi}+\Hxxx{\rho_0})\Hxx{\p_t\psi}\Hxxx{\psi}\\[5pt]
		\leq&\ C_\delta(T)\E(t)+\delta\Hxx{\p_t\psi}^2.
		\end{aligned}
		\end{equation}
		For $I_{23}$, we have
		\begin{equation}\label{eq:I23}
		\begin{aligned}
		I_{23}=&-\intx{\p_x^\a\big((a+2)u\cdot\n_x\psi\big)\cdot\p_x^\a\psi}\leq \delta\Hxxx{\nabla_x\psi}^2+C_\delta (T)\E(t).
		\end{aligned}
		\end{equation}
		For $I_{24}$ and $I_{25}$, recalling the definitions of $R_2$ and $R_4$ in \eqref{eq:sourse 3} and integrating by parts, we have
		\begin{equation}\label{eq:I24 25}
		\begin{aligned}
		I_{24}+I_{25}=&-\intx{\p_x^\a (R_2+R_4)\cdot\p_x^\a\psi}\\[5pt]
		=&-\intx{\p_x^\a \left( \phi \p_t u_0+(a+1)\psi\cdot\n_xu_0+\phi u_0\cdot\n_xu_0+u\cdot \n_x b\right) \cdot\p_x^\a\psi}\\[5pt]
		&-\intx{\p_x^\a \left( b\cdot\n_xu+\l v\cdot\n_x((\mathbf{I-P})g\M),v\r_v +\psi\cdot\nabla_x u_0\right)\cdot \p_x^\a \psi }\\[5pt]
        \leq& C\Hthreex{\phi}\Hthreex{\pt u_0}\Hthreex{\psi}+C(\Hthreex{a}+1)\Hthreex{\n_x u_0}\Hthreex{\psi}^2\\[5pt]
        &+C\Hfourx{u_0}^2\Hthreex{\phi}\Hthreex{\psi} +C\Hthreex{b}\Hthreex{\nabla_x \psi}(\Hthreex{\psi}+\Hthreex{u_0}) \\[5pt]
        &+ C (T)\HxxxLv{\mi g}\Hthreex{\n_x\psi}\\[5pt]
		\leq&\ \delta\Hxxx{\nabla_x\psi}^2+C_\delta(T)\E(t)+C_\delta(T)\Hxxxv{\mi g}^2,
		\end{aligned}
		\end{equation}
        where, in the first inequality above, we use
        \begin{equation*}\label{eq:mig-detail}
        \begin{aligned}
         &-\intx{\p_x^\a \l v\cdot\n_x((\mathbf{I-P})g\M),v\r_v \cdot \p_x^\a \psi }\\[5pt]
         =&\intx{\px^\a\left(\intv{v\otimes v\, \mi g \M}\right):\px^\a\n_x\psi}\\[5pt]
         =&\intx{\px^\a\left(\intv{(v-u)\otimes (v-u)\, \mi g \M}\right):\px^\a\n_x\psi}\\[5pt]
         &+\intx{\px^\a\left(\intv{(v-u)\otimes u\, \mi g \M}\right):\px^\a\n_x\psi}\\[5pt]
         &+\intx{\px^\a\left(\intv{u\otimes (v-u)\, \mi g \M}\right):\px^\a\n_x\psi}\\[5pt]
         &+\intx{\px^\a\left(\intv{u\otimes u\, \mi g \M}\right):\px^\a\n_x\psi}\\[5pt]
         \leq&C (\Hthreex{u}^5+1)\HxxxLv{\mi g}\Hthreex{\n_x\psi}\\[5pt]
         \leq& C(\Hthreex{\psi}^5+\Hxxx{u_0}^5+1)\HxxxLv{\mi g}\Hthreex{\n_x\psi}\\[5pt]
         \leq& C(T)\HxxxLv{\mi g}\Hthreex{\n_x\psi}.
        \end{aligned}
        \end{equation*}
		Collecting the estimates \eqref{eq:I21}, \eqref{eq:I22}, \eqref{eq:I23}, \eqref{eq:I24 25} together with \eqref{eq:psi bt}, and summing over $|\a| \leq 3$, we obtain \eqref{eq:psiH3}. This completes the proof.
	\end{proof}

	Next, we aim to derive the energy estimates for $b$.
	\begin{lemma}\label{lem:bH3} Under the conditions of Theorem \ref{thm1} and the \textit{a priori} assumption \eqref{assumption}, for $t\in [0,T_1] $, we have
		\begin{equation}\label{eq:bH3}
		\begin{aligned}
		&\sum\limits_{|\a|\leq 3}\left( \frac{1}{2}\dt{t}\intx{\frac{|\p_x^\a b|^2}{a+1}}+\intx{\frac{|\p_x^\a b|^2}{(a+1)\varepsilon}}+\intx{\frac{\p_x^\a\nabla_x\phi\cdot\p_x^\a b}{a+1}}+\intx{\p_x^\a\p_t\psi\cdot\p_x^\a b}\right) \\
		\leq&\ \delta\Hxxx{\nabla_x\psi}^2+C_\delta(T)\E(t)+C_\delta(T) \varepsilon\Hxxxxv{\mi g}^2+C_\delta (T)\varepsilon+\delta\frac{\Hxxx{b}^2}{\varepsilon},
		\end{aligned}
		\end{equation}
          where the energy functional $\E(t)$ is given in \eqref{defE} and $C_\delta(T) > 1$ is a constant only depending on $\delta,\ T$.
	\end{lemma}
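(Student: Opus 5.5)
The plan is to start from the error momentum equation \eqref{eq:error}$_2$, divide it by $a+1$, apply $\p_x^\a$ with $|\a|\leq 3$, test against $\p_x^\a b$, and integrate in $x$. Dividing first is convenient: it gives
\[
\p_t\psi+u\cdot\n_x\psi+\frac{\n_x\phi}{a+1}+\frac{\p_t b}{a+1}+\frac{b}{\v(a+1)}=-\frac{R_2+R_3}{a+1}.
\]
After differentiation, every term falling on $1/(a+1)$ is a commutator that contains at least one derivative of $a$. Testing with $\p_x^\a b$ then produces the four left-hand terms of \eqref{eq:bH3} directly:
\begin{itemize}
\item the time derivative $\frac12\dt{t}\intx{|\p_x^\a b|^2/(a+1)}$, up to the correction $\frac12\intx{\p_t a|\p_x^\a b|^2/(a+1)^2}$, which is bounded by $C(T)\E(t)$ through \eqref{eq:a_t Linfty};
\item the dissipative term $\intx{|\p_x^\a b|^2/((a+1)\v)}$;
\item the coupling $\intx{\p_x^\a\n_x\phi\cdot\p_x^\a b/(a+1)}$;
\item the term $\intx{\p_x^\a\p_t\psi\cdot\p_x^\a b}$.
\end{itemize}
Throughout I use the bounds $a+1\in(\frac12,\overline\rho+\frac32)$ from \eqref{eq:a lower bound} and $\Linf{\n_x a}\leq C(T)$.

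\textbf{Commutators.} These have the form $\p_x^\b(1/(a+1))\,\p_x^{\a-\b}X$ with $X\in\{\n_x\phi,\ \p_t b,\ b/\v,\ R_2+R_3\}$, and I treat them case by case.
\begin{itemize}
\item For $X=b/\v$, the term is at most $\v^{-1}C(T)\Hxxx{b}\Hxx{b}$. Since $\Hxx{b}^2/\v\leq\E$, Young's inequality gives $\delta\Hxxx{b}^2/\v+C_\delta(T)\E$.
\item For $X=\n_x\phi$, we have $\Hxxx{\phi}\leq \E^{1/2}$, and the pairing with $b$ contributes $\delta\Hxxx{b}^2/\v+C_\delta\v\E$.
\item For $X=\p_t b$, only $|\a-\b|\leq 2$ occurs, so I replace $\p_t b$ using \eqref{g}$_4$: $\p_t b=-\p_t u-\n_x a-b/\v+R_1$. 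Each resulting piece is bounded in $H^2_x$ by $C(T)(1+\E^{1/2}+\v^{-1}\Hxx{b})$, and the terms carrying $\v^{-1}$ are again absorbed against $\Hxxx{b}^2/\v$ by the same Young argument.
\end{itemize}

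\textbf{Convection.} Write $u\cdot\n_x\psi=(\psi+u_0)\cdot\n_x\psi$. This gives $\intx{\p_x^\a(u\cdot\n_x\psi)\cdot\p_x^\a b}\leq C(T)\Hxxx{\n_x\psi}\Hxxx{b}$, and Young's inequality yields $\delta\Hxxx{\n_x\psi}^2+C_\delta(T)\E$.

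\textbf{Source terms.} Among the pieces of $R_2$, the following are bounded by $C(T)\E$ together with $\delta\Hxxx{\n_x\psi}^2$ or $\delta\Hxxx{b}^2/\v$:
\begin{itemize}
\item the pieces linear in $\phi$ and $\psi$;
\item $b\cdot\n_x u$;
\item $u\cdot\n_x b$, after integrating by parts in the top-order term and using $\div_x u=0$.
\end{itemize}
The term $R_3$ is a pure profile term: by \eqref{limit} one has $(\rho_0+1)(\p_tu_0+u_0\cdot\n_x u_0)+\n_x\rho_0=-\p_tu_0-u_0\cdot\n_xu_0-\n_x P_0+\Delta_x u_0+\n_x\rho_0$, which is $O(C(T))$ in $H^3_x$. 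It therefore contributes $C(T)\Hxxx{b}\leq \delta\Hxxx{b}^2/\v+C_\delta(T)\v$, and this is the source of the $C_\delta(T)\v$ on the right of \eqref{eq:bH3}.

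\textbf{Main obstacle.} I expect the hardest term to be the microscopic flux $\p_x^\a\l v\cdot\n_x(\mi g\M),v\r_v$ paired with $\p_x^\a b$. At $|\a|=3$ it carries four spatial derivatives of $\mi g$. I will not integrate by parts onto $b$, since that would require $\Hxxxx{b}$, which is unavailable. Instead I keep the derivatives on $g$, expand $v\otimes v$ around $u$ as in the proof of Lemma \ref{lem:psiH3}, and bound the term by $C(T)\Hxxxxv{\mi g}\Hxxx{b}\leq \delta\Hxxx{b}^2/\v+C_\delta(T)\v\Hxxxxv{\mi g}^2$. This matches the $C_\delta(T)\v\Hxxxxv{\mi g}^2$ in \eqref{eq:bH3}. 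Derivatives falling on $\M$ produce factors $\n_x u$ up to order four, where $\Hxxxx{u}\leq C(T)+\E^{1/2}$ thanks to the $\v\Hxxx{\n_x\psi}^2$ part of $\E$. Handling these carefully at the top order is where I anticipate most of the bookkeeping.

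Summing over $|\a|\leq3$ then yields \eqref{eq:bH3}.
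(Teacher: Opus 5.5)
Your argument is correct in substance. It is the same weighted $H^3_x$ energy estimate as in the paper, but you put the weight $1/(a+1)$ in a different place, and this changes which commutators you must handle.

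\textbf{Comparison of routes.} The paper does not divide. It applies $\partial_x^\alpha$ to $\eqref{eq:error}_2$ as it stands and tests with $\partial_x^\alpha b/(a+1)$. Then $\nabla_x\phi$, $\partial_t b$ and $b/\varepsilon$ have constant coefficients and produce no commutators. The variable coefficient generates commutators only from $(a+1)\partial_t\psi$ and $(a+1)u\cdot\nabla_x\psi$. The key one, $I_{32}$, is bounded by $C(T)\Hxx{\partial_t\psi}\Hxxx{b}\leq\delta\Hxxx{b}^2/\varepsilon+C_\delta(T)\E(t)$, using $\varepsilon\Hxx{\partial_t\psi}^2\leq\E(t)$.

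Dividing first removes that commutator but creates new ones on $\nabla_x\phi$, $b/\varepsilon$, $R_2+R_3$ and, most awkwardly, on $\partial_t b$. The $H^2_x$ norm of $\partial_t b$ is not controlled by $\E(t)$ pointwise in time, which is why you need the substitution via $\eqref{g}_4$. That substitution does work. It is cleaner if you merge the $\partial_t b$ and $b/\varepsilon$ commutators, since $\partial_t b+b/\varepsilon=R_1-\partial_t u-\nabla_x a$. But it buys nothing: it is more bookkeeping for the same inequality. Indeed, substituting from $\eqref{eq:error}_2$ instead would collapse all your commutators back into the paper's $I_{32}$ and $I_{33}$.

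\textbf{Two bounds that misread the $\varepsilon$-weights in $\E(t)$.} Both are repairable with terms already on the right of \eqref{eq:bH3}.

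First, $\Hxx{\partial_t u}$, $\Hxx{a\partial_t u}$ and the $H^2_x$ norm of the microscopic flux (which requires $\HxxxLv{\mi g}$) are only of size $C(T)+\varepsilon^{-1/2}\E(t)^{1/2}$, not $C(T)(1+\E(t)^{1/2})$. This is because $\E(t)$ contains only $\varepsilon\Hxx{\partial_t\psi}^2$ and $\varepsilon\HxxxLv{\mi g}^2$. The estimate still closes, since $C(T)\varepsilon^{-1/2}\E(t)^{1/2}\Hxxx{b}\leq\delta\Hxxx{b}^2/\varepsilon+C_\delta(T)\E(t)$.

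Second, your claim $\|u\|_{H^4_x}\leq C(T)+\E(t)^{1/2}$ is false. The bound $\varepsilon\Hxxx{\nabla_x\psi}^2\leq\E(t)$ only gives $\Lx{\nabla_x^4\psi}\leq\varepsilon^{-1/2}\E(t)^{1/2}$. Hence the part of $\partial_x^\alpha\l v\cdot\nabla_x((\mathbf{I-P})g\M),v\r_v$ in which all four derivatives fall on $\M$ is not bounded by $C(T)\Hxxxxv{\mi g}\Hxxx{b}$. Bound it instead by $C\Hxxv{\mi g}\big(C(T)+\Hxxx{\nabla_x\psi}\big)\Hxxx{b}$. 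Then absorb it into $\delta\Hxxx{\nabla_x\psi}^2+C_\delta(T)\E(t)$, using $\E(t)\leq 2\eta^2$. This is what the paper does, and it is one reason $\delta\Hxxx{\nabla_x\psi}^2$ appears on the right-hand side. You already used the same device correctly for $b\cdot\nabla_x u$.
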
		
	\begin{proof}
		Applying $\p_x^\a\, (|\a|\leq 3)$ to $\eqref{eq:error}_2 $, multiplying by $\frac{\p_x^\a b}{a+1}$, integrating over $ \R^3$, we get
		\begin{equation}\label{eq:bH3-1}
		\begin{aligned}
		&\frac{1}{2}\dt{t}\intx{\frac{|\p_x^\a b|^2}{a+1}}+\intx{\frac{|\p_x^\a b|^2}{(a+1)\varepsilon}}+\intx{\frac{\p_x^\a\nabla_x\phi\cdot\p_x^\a b}{a+1}}+\intx{\p_x^\a\p_t\psi\cdot\p_x^\a b}\\
		=&\underbrace{-\intx{\frac{|\p_x^\a b|^2\p_t a}{2(a+1)^2}}}_{I_{31}}\underbrace{-\sum\limits_{\b\leq\a,|\b|\neq 0}C^\b_\a\intx{\frac{\p_x^\b a \p_x^{\a-\b}\p_t\psi\cdot\p_x^\a b}{a+1}}}_{I_{32}}\\
		&\underbrace{-\intx{\frac{\p_x^\a((a+1)u\cdot\n_x\psi)\cdot\p_x^\a b}{a+1}}}_{I_{33}}\underbrace{-\intx{\frac{ \p_x^\a (R_2+R_3) \cdot\p_x^\a b}{a+1}}}_{I_{34}}.\\
		\end{aligned}
		\end{equation}
		For $I_{31}$, using \eqref{eq:a_t Linfty}-\eqref{eq:a lower bound}, we have	
		\begin{equation}\label{eq:I31}
		I_{31}=-\intx{\frac{|\p_x^\a b|^2\p_t a}{2(a+1)^2}}\leq C\Linftyx{\pt a}\Hthreex{b}^2\leq C(T)\E(t).
		\end{equation}	
		For $I_{32}$, using $a=\rho_0+\phi$, we obtain
		\begin{equation}\label{eq:I32}
		\begin{aligned}
		I_{32}=&\sum\limits_{\b\leq\a,|\b|\neq 0}C^\b_\a\intx{\frac{\p_x^\b a \p_x^{\a-\b}\p_t\psi\cdot\p_x^\a b}{a+1}}\\
		\leq&\ C(\Hxxx{\phi}+\Hxxx{\rho_0})\Hxx{\p_t\psi}\Hxxx{b}\\
		\leq&\ \delta\frac{\Hxxx{b}^2}{\varepsilon}+C_\delta (T)\E(t).
		\end{aligned}
		\end{equation}
		For $I_{33}$, using \eqref{eq:a lower bound}, we have
		\begin{equation}\label{eq:I33}
		\begin{aligned}
		I_{33}=&-\intx{\frac{\p_x^\a((a+1)u\cdot\n_x\psi)\cdot\p_x^\a b}{a+1}}\\
		\leq&\ \delta\frac{\Hxxx{b}^2}{\varepsilon}+C_\delta\varepsilon(\Hthreex{a}^2+1)\Hthreex{u}^2\Hxxx{\nabla_x\psi}^2\\
		\leq&\ \delta\frac{\Hxxx{b}^2}{\varepsilon}+C_\delta (T)\E(t).
		\end{aligned}
		\end{equation}
		For $I_{34}$, recalling the definitions of $R_2$ and $R_3$ in \eqref{eq:sourse 3} and integrating by parts, we have
		\begin{equation}\label{eq:I34}
		\begin{aligned}
		I_{34}=&-\intx{\p_x^\a (R_2+R_3)\cdot\frac{\p_x^\a b}{a+1}}\\
		=&-\intx{\p_x^\a \left( \phi \p_t u_0+(a+1)\psi\cdot\n_xu_0+\phi u_0\cdot\n_xu_0+u\cdot \n_x b\right) \cdot \frac{\p_x^\a b}{a+1}}\\
		&-\intx{\p_x^\a \left( b\cdot\n_xu+\l v\cdot\n_x((\mathbf{I-P})g\M),v\r_v \right)\cdot \frac{\p_x^\a b}{a+1} }\\
		&-\intx{\p_x^\a \left((\rho_0+1)\p_t u_0+(\rho_0+1)u_0\cdot\n_xu_{0}+\n_x\rho_0 \right)\cdot \frac{\p_x^\a b}{a+1} }\\
		\leq&\ \delta\Hxxx{\nabla_x\psi}^2+\delta\frac{\Hthreex{b}^2}{\v}+C_\delta (T)\E(t)+C_\delta(T) \varepsilon\Hxxxxv{\mi g}^2+C_\delta(T)\varepsilon,
		\end{aligned}
		\end{equation}
        where in the inequality above,  we utilize
        \begin{equation*}
        \begin{aligned}
            &-\intx{(u\cdot \nabla_x \px^\a b)\cdot \frac{\px ^\a b}{a+1}}=\intx{\frac{1}{2}|\px^\a b|^2\div_x \left(\frac{u}{a+1}\right)}\leq C(T)\E(t),\\[5pt]
            &-\intx{(b\cdot \n_x\px^\a u)\cdot \frac{\px^\a b}{a+1}}=-\intx{(b\cdot \n_x\px^\a (\psi+u_0))\cdot \frac{\px^\a b}{a+1}}\leq \frac{\delta}{4}\Hthreex{\n_x\psi}^2+C_\delta (T)\E(t),
                \end{aligned}
        \end{equation*}
        and
        \begin{equation*}
            \begin{aligned}
                &-\intx{\p_x^\a \l v\cdot\n_x((\mathbf{I-P})g\M),v\r_v \cdot \frac{\p_x^\a b}{a+1}}\\[5pt]
                =&-\intxv{v\cdot \n_x \px^\a \left( \mi g\, \M\right)\frac{v\cdot \px^\a b}{a+1}}\\[5pt]
                =&-\intxv{(v-u)\cdot \n_x \px^\a \left( \mi g\, \M\right)\frac{(v-u)\cdot \px^\a b}{a+1}}\\[5pt]
                &-\intxv{u\cdot \n_x \px^\a \left( \mi g\, \M\right)\frac{(v-u)\cdot \px^\a b}{a+1}}\\[5pt]
                &-\intxv{(v-u)\cdot \n_x \px^\a \left( \mi g\, \M\right)\frac{u\cdot \px^\a b}{a+1}}\\[5pt]
                &-\intxv{u\cdot \n_x \px^\a \left( \mi g\, \M\right)\frac{u\cdot \px^\a b}{a+1}}\\[5pt]
                \leq&C(\Hthreex{u}^5+1)\left[\HxxLv{\mi g}\Hthreex{\nabla_x u}\Hthreex{b}+\HxxxxLv{\mi g}\Htwox{u}\Hthreex{b}\right]\\[5pt]
                \leq&\frac{\delta}{4}\Hthreex{\n_x\psi}^2+C_\delta(T)\E(t)+C_\delta(T)\v+\frac{\delta}{4} \frac{\Hthreex{b}^2}{\v}+C_\delta(T)\v \HxxxxLv{\mi g}^2.
            \end{aligned}
        \end{equation*}

		Finally, collecting the estimates \eqref{eq:bH3-1}, \eqref{eq:I31}, \eqref{eq:I32}, \eqref{eq:I33}, \eqref{eq:I34} and summing over $|\a| \leq 3$ gives the estimate \eqref{eq:bH3}.
	\end{proof}	

 To further eliminate the term \(\intx{ \partial_x^\alpha \partial_t\psi \cdot \partial_x^\alpha b}\) in \eqref{eq:bH3}, we prove the following lemma.
	\begin{lemma}\label{lem:psitH3}	Under the conditions of Theorem \ref{thm1} and the \textit{a priori} assumption \eqref{assumption}, for $t\in [0,T_1] $, we have
		\begin{equation}\label{eq:psitH3}
		\begin{aligned}
		&\sum\limits_{|\a|\leq 3}\left( \frac{\varepsilon}{2}\dt{t}\intx{|\nabla_x\p_x^\a\psi|^2}+\varepsilon\intx{|\p_x^\a\p_t\psi|^2}-\intx{ \p_x^\a\p_t\psi \cdot\p_x^\a b}\right) \\
		\leq&\ \delta\varepsilon\Hxxx{\p_t\psi}^2+C_\delta(T)\varepsilon+C_\delta(T)\E(t),
		\end{aligned}
		\end{equation}
          where the energy functional $\E(t)$ is given in \eqref{defE} and $C_\delta (T) > 1$ is a constant only depending on $\delta,\ T$.
	\end{lemma}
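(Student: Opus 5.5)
We test the fluid error equation $\eqref{eq:error}_3$ against time derivatives of $\psi$. Fix $|\a|\leq 3$, apply $\p_x^\a$ to $\eqref{eq:error}_3$, multiply by $\varepsilon\,\p_x^\a\p_t\psi$ and integrate over $\R^3$. Since $\div_x\psi=0$, we have $\div_x\p_t\psi=0$. Hence the pressure term $\n_x(P+\rho_0-P_0)$ integrates to zero against $\p_x^\a\p_t\psi$. The dissipative term $-\varepsilon\Delta_x\p_x^\a\psi\cdot\p_x^\a\p_t\psi$ becomes $\frac{\varepsilon}{2}\dt{t}\Ltwox{\n_x\p_x^\a\psi}^2$ after integration by parts. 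The time-derivative term gives $\varepsilon\Ltwox{\p_x^\a\p_t\psi}^2$, and the singular coupling $-\frac{b}{\v}$ gives exactly $-\intx{\p_x^\a b\cdot\p_x^\a\p_t\psi}$. This reproduces the left-hand side of \eqref{eq:psitH3}. What remains on the right-hand side is
\[
-\varepsilon\intx{\p_x^\a(u\cdot\n_x\psi)\cdot\p_x^\a\p_t\psi}+\varepsilon\intx{\p_x^\a(R_3-R_4)\cdot\p_x^\a\p_t\psi}.
\]

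For the convection term we write $u=\psi+u_0$ and bound the $H^3_x$ norm of $u\cdot\n_x\psi$ by $C(\Hthreex{\psi}+\Hthreex{u_0})\Hthreex{\n_x\psi}$, using that $H^3_x$ is an algebra. The first factor is bounded by $C(T)$ by \eqref{eq:regu limit} and \eqref{assumption}. Young's inequality then gives
\[
\delta\varepsilon\Hxxx{\p_t\psi}^2+C_\delta(T)\varepsilon\Hxxx{\n_x\psi}^2,
\]
and $\varepsilon\Hxxx{\n_x\psi}^2\leq\E(t)$ by the definition \eqref{defE}.

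For $R_4=\psi\cdot\n_x u_0$ we similarly get $\varepsilon\Hthreex{R_4}^2\leq C(T)\varepsilon\Hthreex{\psi}^2\leq C(T)\E(t)$, using $\Hxxxx{u_0}\leq C(T)$. For $R_3$ we only need $\Hthreex{R_3}\leq C(T)$, which follows from \eqref{eq:regu limit}: $\p_t u_0\in H^4_x$, $u_0\in H^6_x$, and $\rho_0\in H^6_x$. This gives $\delta\varepsilon\Hxxx{\p_t\psi}^2+C_\delta(T)\varepsilon$.

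We do not try to use the fact that $R_3$ formally vanishes against divergence-free fields; the crude bound $C_\delta(T)\varepsilon$ is all that \eqref{eq:psitH3} requires. Summing over $|\a|\leq3$ closes the estimate.

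The main point to check carefully is that no term requires more regularity than $\E$ provides. Every term is paired with $\varepsilon\p_x^\a\p_t\psi$, and $\p_t\psi$ at order $3$ appears only with weight $\delta\varepsilon$ on the right-hand side, which is allowed by the statement. So the only potential obstacle is confirming that $\varepsilon\Hxxx{\n_x\psi}^2$ is controlled by $\E$. It is, since this quantity appears explicitly in \eqref{defE}, and no commutator with $a$ arises because $\eqref{eq:error}_3$ has constant coefficient in front of $\p_t\psi$. This is the reason to test $\eqref{eq:error}_3$ rather than $\eqref{eq:error}_2$ or $\eqref{eq:error}_4$.
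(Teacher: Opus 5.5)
Your proposal is correct and is essentially the paper's proof: you apply $\partial_x^\alpha$ to the third equation of \eqref{eq:error} and test it against $\varepsilon\partial_x^\alpha\partial_t\psi$, remove the pressure using $\mathrm{div}_x\psi=0$, and bound the convection and $R_3-R_4$ terms by Young's inequality, using that $\varepsilon\|\nabla_x\psi\|_{H^3_x}^2$ and $\|\psi\|_{H^3_x}^2$ are controlled by $\mathbb{E}(t)$ and that $\|R_3\|_{H^3_x}\le C(T)$. The only slip is your unused aside that $R_3$ ``formally vanishes against divergence-free fields'': only its $\nabla_x\rho_0$ part does, since $(\rho_0+1)(\partial_t u_0+u_0\cdot\nabla_x u_0)$ is not a gradient in general, but the crude bound you actually use is exactly what is needed, so the argument is unaffected.
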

	\begin{proof}
	Applying $\partial_{x}^{\alpha}$ ($|\alpha|\leq 3$) to $\eqref{eq:error}_3$, multiplying by $\varepsilon\partial_{x}^{\alpha}\pt\psi$, and integrating by parts, we obtain
		\begin{equation*}
		\begin{aligned}
		&\frac{\varepsilon}{2}\dt{t}\intx{|\nabla_x\p_x^\a\psi|^2}+\varepsilon\intx{|\p_x^\a\p_t\psi|^2}-\intx{\p_x^\a b \cdot\p_x^\a\p_t\psi}\\[5pt]
		=&-\varepsilon\intx{\p_x^\a(u\cdot\nabla_x\psi)\cdot\p_x^\a\p_t\psi}+\varepsilon\intx{\p_x^\a(R_3-R_4)\cdot\p_x^\a\p_t\psi}\\[5pt]
		=&-\varepsilon\intx{\p_x^\a((\psi+u_0)\cdot\nabla_x\psi)\cdot\p_x^\a\p_t\psi}\\[5pt]
		&+\varepsilon\intx{\p_x^\a\big( (\rho_0+1)\p_t u_0+(\rho_0+1)u_0\cdot\n_xu_{0}+\n_x\rho_0-\psi\cdot\nabla_xu_0 \big)\cdot\p_x^\a\p_t\psi}\\[5pt]
		\leq&\ \delta\varepsilon\Hxxx{\p_t\psi}^2+C_\delta (T)\varepsilon+C_\delta (T)\E(t),
		\end{aligned}
		\end{equation*}
		where $\up
        {div}_x\psi=\div_x u-\div_x u_0=0$ is utilized in the first equality. The estimate \eqref{eq:psitH3} can be obtained by summing over $|\a| \leq 3$.
	\end{proof}

	To make it clear, we summarize the progress by combining  Lemmas \ref{lem:phiH3}-\ref{lem:psitH3}, yielding the partial energy estimate in the following corollary.
\begin{corollary}
\label{lem:phi b psi psit}
		Under the conditions of Theorem \ref{thm1} and the \textit{a priori} assumption \eqref{assumption}, for $t\in [0,T_1] $, we have
	\begin{equation}\label{eq:phi b psi psit}
	\begin{aligned}
	&\Hxxx{(\phi,\psi,b)}^2+\varepsilon\Hxxx{\nabla_x\psi}^2+\int_0^t\left( \Hxxx{\nabla_x\psi}^2+\frac{\Hxxx{b}^2}{\varepsilon}\right)\d s\\
	\leq&\ \int_0^t \Big( C_\delta (T)\E(s)+C_\delta(T)\varepsilon \Hxxxxv{\mi g}^2+C_\delta (T)\Hxxxv{\mi g}^2+C_\delta (T)\varepsilon \Big)\d s \\
	&+\int_0^t\Big( \delta\Hxx{\p_t\psi}^2+\delta\varepsilon\Hxxx{\p_t\psi}^2+ C_\delta(T)\varepsilon^2\Hxxx{\p_t\psi}^2 \Big) \d s+C\E(0)+C\varepsilon\Hxx{\pt\psi}^2,
	\end{aligned}
	\end{equation}
      where the energy functional $\E(t)$ is given in \eqref{defE} and $C_\delta(T) > 1$ is a constant only depending on $\delta,\ T$.
\end{corollary}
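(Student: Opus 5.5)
We obtain the corollary by adding the four differential inequalities of Lemmas \ref{lem:phiH3}--\ref{lem:psitH3}, summed over $|\alpha|\le 3$. The indefinite cross terms cancel exactly, so no coercivity from $a$ is needed. We then integrate in time and use the bounds \eqref{eq:a lower bound} on $a+1$ to convert the weighted integrals into $H^3$ norms.

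\emph{Cancellations.} The coupling $\int \frac{\partial_x^\alpha \mathrm{div}_x b\,\partial_x^\alpha\phi}{a+1}$ from \eqref{eq:phiH3} and $\int\frac{\partial_x^\alpha\nabla_x\phi\cdot\partial_x^\alpha b}{a+1}$ from \eqref{eq:bH3} add to
\[
\int \mathrm{div}_x\Big(\frac{\partial_x^\alpha\phi\,\partial_x^\alpha b}{a+1}\Big)\,\d x+\int\frac{\partial_x^\alpha\phi\,\partial_x^\alpha b\cdot\nabla_x a}{(a+1)^2}\,\d x .
\]
The first integral vanishes. The second is bounded by $C\|\nabla_x a\|_{L^\infty}\Hxxx{\phi}\Hxxx{b}\le C(T)\E(t)$.

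The term $\int\partial_x^\alpha\partial_t\psi\cdot\partial_x^\alpha b$ in \eqref{eq:bH3} cancels against $-\int\partial_x^\alpha\partial_t\psi\cdot\partial_x^\alpha b$ in \eqref{eq:psitH3}. After this, Lemma \ref{lem:psitH3} contributes $+\frac{\varepsilon}{2}\frac{d}{dt}\|\nabla_x\partial_x^\alpha\psi\|^2+\varepsilon\|\partial_x^\alpha\partial_t\psi\|^2$. The positive term $\varepsilon\|\partial_x^\alpha\partial_t\psi\|^2$ absorbs the negative term $-\varepsilon\|\partial_x^\alpha\partial_t\psi\|^2$ coming from Lemma \ref{lem:psiH3}.

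\emph{What remains on the left.} The left side then contains
\begin{itemize}
\item the time derivatives of $\int\frac{|\partial_x^\alpha\phi|^2}{2(a+1)}$, $\int\frac{(a+2)|\partial_x^\alpha\psi|^2}{2}$ and $\int\frac{|\partial_x^\alpha b|^2}{2(a+1)}$;
\item $\varepsilon\frac{d}{dt}\|\partial_x^\alpha\nabla_x\psi\|^2$, which is the sum of the two halves from Lemmas \ref{lem:psiH3} and \ref{lem:psitH3};
\item $\varepsilon\frac{d}{dt}\int\partial_x^\alpha\partial_t\psi\cdot\partial_x^\alpha\psi$;
\item the dissipation $\|\nabla_x\partial_x^\alpha\psi\|^2+\int\frac{|\partial_x^\alpha b|^2}{(a+1)\varepsilon}$.
\end{itemize}
By \eqref{eq:a lower bound}, the $b$-dissipation is bounded below by $c\Hxxx{b}^2/\varepsilon$. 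We choose $\delta$ small, say $\delta c^{-1}\le 1/4$, so that the terms $\delta\Hxxx{\nabla_x\psi}^2$ and $\delta\Hxxx{b}^2/\varepsilon$ on the right are absorbed by this dissipation. The remaining right-hand terms are $\delta\Hxx{\partial_t\psi}^2$, $\delta\varepsilon\Hxxx{\partial_t\psi}^2$, $C_\delta\varepsilon^2\Hxxx{\partial_t\psi}^2$, $C_\delta\E$, the two $(\mathbf{I-P})g$ terms, and $C_\delta\varepsilon$. These are exactly what appears inside the time integral of \eqref{eq:phi b psi psit}.

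\emph{Time integration and the indefinite term.} We integrate over $[0,t]$. The weights satisfy $a+2\ge 1$ and $(a+1)^{-1}\ge(\overline\rho+3/2)^{-1}$, so the energy part dominates $c\Hxxx{(\phi,\psi,b)}^2+\varepsilon\Hxxx{\nabla_x\psi}^2$. At time $0$ every term is bounded by $C\E(0)$; for the cross term this uses $\varepsilon\Hxxx{\partial_t\psi}\Hxxx{\psi}$ and the fact that $\partial_t\psi(0)$ is controlled through the equation.

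The main obstacle is the indefinite boundary term $\varepsilon\int\partial_x^\alpha\partial_t\psi\cdot\partial_x^\alpha\psi$ at time $t$. For $|\alpha|\le 2$ it is bounded by
\[
\tfrac{c}{4}\Hxx{\psi}^2+C\varepsilon^2\Hxx{\partial_t\psi}^2 ,
\]
and $C\varepsilon^2\Hxx{\partial_t\psi}^2\le C\varepsilon\Hxx{\partial_t\psi}^2$, which is the last term of \eqref{eq:phi b psi psit}. For $|\alpha|=3$ we first integrate one derivative by parts: writing $\partial_x^\alpha=\partial_x^{\alpha'}\partial_x^{\alpha-\alpha'}$ with $|\alpha'|=1$,
\[
\varepsilon\int\partial_x^\alpha\partial_t\psi\cdot\partial_x^\alpha\psi=-\varepsilon\int\partial_x^{\alpha-\alpha'}\partial_t\psi\cdot\partial_x^{\alpha+\alpha'}\psi ,
\]
which is bounded by
\[
\tfrac{\varepsilon}{4}\Hxxx{\nabla_x\psi}^2+C\varepsilon\Hxx{\partial_t\psi}^2 .
\]
The first piece is absorbed by the energy term $\varepsilon\Hxxx{\nabla_x\psi}^2$, and the second is the stated $C\varepsilon\Hxx{\partial_t\psi}^2$.

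The essential check is that no $\varepsilon\Hxxx{\partial_t\psi}^2$ appears at time $t$. This is why the integration by parts is applied at the top order. Collecting the bounds gives \eqref{eq:phi b psi psit}.
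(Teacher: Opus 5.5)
Your proposal is correct and matches the paper's proof. It uses the same two cancellations: the $\phi$--$b$ coupling reduces to a $\nabla_x a$ commutator, and both $\varepsilon\|\partial_x^\alpha\partial_t\psi\|_{L^2_x}^2$ and $\int\partial_x^\alpha\partial_t\psi\cdot\partial_x^\alpha b$ cancel exactly. It then applies the same top-order integration by parts to bound $\varepsilon\int\partial_x^\alpha\partial_t\psi\cdot\partial_x^\alpha\psi$ by $C\varepsilon\|\partial_t\psi\|_{H^2_x}^2+C\|\psi\|_{H^2_x}^2+\tfrac14\varepsilon\|\nabla_x\psi\|_{H^3_x}^2$.

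One small correction concerns your treatment of $s=0$. You do not need $\|\partial_t\psi\|_{H^3_x}$ or the equation there, and $\|\partial_t\psi\|_{H^3_x}$ is not controlled by $\mathbb{E}$ anyway. The same integration-by-parts bound at $s=0$ gives $C\mathbb{E}(0)$ directly, because $\mathbb{E}$ contains $\varepsilon\|\partial_t\psi\|_{H^2_x}^2$ and $\varepsilon\|\nabla_x\psi\|_{H^3_x}^2$.
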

\begin{proof}
	First, combining \eqref{eq:phiH3} in Lemma \ref{lem:phiH3} and \eqref{eq:bH3} in Lemma \ref{lem:bH3} yields
	\begin{equation}\label{eq:phi b H3}
	\begin{aligned}
	&\sum\limits_{|\a|\leq 3}\left( \frac{1}{2}\dt{t}\intx{\frac{|\p_x^\a \phi|^2}{a+1}} + \frac{1}{2}\dt{t}\intx{\frac{|\p_x^\a b|^2}{a+1}}+\intx{\frac{|\p_x^\a b|^2}{(a+1)\varepsilon}}+\intx{\p_x^\a\p_t\psi\cdot\p_x^\a b}\right)\\
	\leq&\ \delta\Hxxx{\nabla_x\psi}^2+C_\delta (T)\E(t)+C_\delta (T)\varepsilon \Hxxxxv{\mi g}^2+C_\delta (T)\varepsilon+\delta\frac{\Hxxx{b}^2}{\varepsilon},
	\end{aligned}
	\end{equation}
	where we have used the fact that
	\begin{equation*}
	\begin{aligned}
	&\left|\intx{\frac{\p_x^\a\div_xb\,\p_x^\a\phi}{a+1}}+\intx{\frac{\p_x^\a\nabla_x\phi\cdot\p_x^\a b}{a+1}}\right|\\[5pt]
    =&\left|\intx{\frac{\div_x(\p_x^\a b\,\p_x^\a\phi)}{a+1}}\right|
	=\left|\intx{\frac{(\p_x^\a b\,\p_x^\a\phi)\cdot\nabla_xa}{(a+1)^2}}\right|
    \leq C(T)\E(t).
	\end{aligned}
	\end{equation*}
	
	Then, summing \eqref{eq:psiH3} in Lemma \ref{lem:psiH3} and \eqref{eq:psitH3} in Lemma \ref{lem:psitH3} gives
	\begin{equation}\label{eq:psi psit H3}
	\begin{aligned}
	&\sum\limits_{|\a|\leq 3}\left( \frac{1}{2}\dt{t}\intx{(a+2)|\p_x^\a\psi|^2}+\intx{|\p_x^\a\n_x\psi|^2} \right) \\[5pt]
	&+\sum\limits_{|\a|\leq 3}\left( \varepsilon\dt{t}\intx{\p_x^\a\p_t\psi\cdot\p_x^\a\psi}+\varepsilon\dt{t}\intx{|\p_x^\a\nabla_x\psi|^2}-\intx{\p_x^\a\p_t\psi \cdot\p_x^\a b}\right) \\[5pt]
	\leq&\ \delta\Hxxx{\nabla_x\psi}^2+\delta\Hxx{\p_t\psi}^2+C_\delta (T)\E(t)+C_\delta (T)\Hxxxv{\mi g}^2\\[5pt]
	&+C_\delta(T)\varepsilon^2\Hxxx{\p_t\psi}^2+C_\delta (T)\varepsilon+ \delta\varepsilon\Hxxx{\p_t\psi}^2.
	\end{aligned}
	\end{equation}

	Furthermore, by adding \eqref{eq:phi b H3} and \eqref{eq:psi psit H3}, we obtain
	\begin{equation}\label{eq:totalma}
	\begin{aligned}
	&\sum\limits_{|\a|\leq 3}\left( \frac{1}{2}\dt{t}\intx{\frac{|\p_x^\a \phi|^2}{a+1}} + \frac{1}{2}\dt{t}\intx{\frac{|\p_x^\a b|^2}{a+1}}+\intx{\frac{|\p_x^\a b|^2}{(a+1)\varepsilon}}\right)\\[5pt]
	&+\sum\limits_{|\a|\leq 3}\left( \frac{1}{2}\dt{t}\intx{(a+2)|\p_x^\a\psi|^2}+\intx{|\p_x^\a\n_x\psi|^2} \right) \\[5pt]
	&+\sum\limits_{|\a|\leq 3}\left( \varepsilon\dt{t}\intx{\p_x^\a\p_t\psi\cdot\p_x^\a\psi}+\varepsilon\dt{t}\intx{|\p_x^\a\nabla_x\psi|^2}\right)\\[5pt]
	\leq&\ \delta\Hxxx{\nabla_x\psi}^2+C_\delta (T)\E(t)+C_\delta(T) \varepsilon \Hxxxxv{\mi g}^2+C_\delta (T)\Hxxxv{\mi g}^2\\[5pt]
    &+C_\delta (T)\varepsilon+\delta\frac{\Hxxx{b}^2}{\varepsilon}+\delta\Hxx{\p_t\psi}^2+C_\delta(T)\varepsilon^2\Hxxx{\p_t\psi}^2+ \delta\varepsilon\Hxxx{\p_t\psi}^2.
	\end{aligned}
	\end{equation}

    Finally, considering
    \begin{equation*}
	\begin{aligned}
	&\sum\limits_{|\a|\leq 3}\left|\varepsilon\intx{\p_x^\a\p_t\psi\cdot\p_x^\a\psi}\right|\leq C\varepsilon\Hxx{\pt\psi}^2+C_\delta\varepsilon\Hxx{\psi}^2+\delta\varepsilon\Hxxx{\nabla_x\psi}^2,
	\end{aligned}
	\end{equation*}
    choosing $\delta$ sufficiently small and integrating \eqref{eq:totalma} over $(0,t)$, we can obtain the estimate \eqref{eq:phi b psi psit}.
\end{proof}
	
In order to control the problematic term $\p_t\psi$ on the right-hand side of Corollary \ref{lem:phi b psi psit}, we have to further make use of the dissipation for $\p_t\psi$, which is presented in the lemma below.

	\begin{lemma}\label{lem:bt psit}
			Under the conditions of Theorem \ref{thm1} and the \textit{a priori} assumption \eqref{assumption}, for $t\in [0,T_1] $, we have
		\begin{equation*}
		\begin{aligned}
		&\frac{\Hxx{b}^2}{\varepsilon}+\Hxx{\nabla_x\psi}^2+\varepsilon\Hxx{\p_t\psi}^2+\int_0^t{\left( \Hxx{\p_tb}^2+\Hxx{\p_t\psi}^2+\varepsilon\Hxx{\nabla_x\p_t\psi}^2\right) }\d s\\
		\leq&\ C(T)\E(0)+C(T)\varepsilon+C(T)\int_{0}^t\left( \E(s)+\Hxxxv{\mi g}^2 \right)\d s,
		\end{aligned}
		\end{equation*}
          where the energy functional $\E(t)$ is given in \eqref{defE} and $C(T) > 1$ is a constant only depending on $T$.
	\end{lemma}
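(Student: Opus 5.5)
The plan is to exploit the Kelvin--Voigt damped wave structure described in Idea I, at the level $|\alpha|\le 2$, and then to recover the $\partial_t b$ dissipation and the $\|b\|_{H^2_x}^2/\varepsilon$ energy from the momentum error equation $\eqref{eq:error}_2$. The estimate is built from two pieces: (i) a wave-type estimate for $\psi$, which I multiply by a large constant $K$; (ii) an estimate of $\eqref{eq:error}_2$ tested against $\partial_t b$.

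\emph{Step 1 (wave estimate).} Substitute \eqref{eq:bt} into $\eqref{eq:error}_4$. At the principal level this gives
\[
\varepsilon\partial_{tt}\psi-\varepsilon\Delta_x\partial_t\psi+(a+2)\partial_t\psi-\Delta_x\psi+\nabla_x\Pi=\mathcal{N},
\]
where $\Pi$ collects all gradient terms (pressures, $a$, $\rho_0$, $P_0$). The remainder $\mathcal{N}$ contains $-(a+2)u\cdot\nabla_x\psi-\varepsilon\partial_t(u\cdot\nabla_x\psi)$, $\varepsilon\partial_tR_3-\varepsilon\partial_tR_4$, and $-R_2-R_4$. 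Apply $\partial_x^\alpha$ with $|\alpha|\le2$ and test against $\partial_x^\alpha\partial_t\psi$. Since $\mathrm{div}_x\partial_t\psi=0$, every gradient term vanishes, and this holds regardless of the size of $a$. The left-hand side becomes
\[
\frac{\varepsilon}{2}\frac{d}{dt}\|\partial_x^\alpha\partial_t\psi\|_{L^2_x}^2+\varepsilon\|\nabla_x\partial_x^\alpha\partial_t\psi\|_{L^2_x}^2+\int(a+2)|\partial_x^\alpha\partial_t\psi|^2\,dx+\frac12\frac{d}{dt}\|\nabla_x\partial_x^\alpha\psi\|_{L^2_x}^2 .
\]
By \eqref{eq:a lower bound} we have $a+2\ge 3/2$, so the damping is coercive uniformly without any smallness of $\rho_0$. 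Commutators $\partial_x^\beta a\,\partial_x^{\alpha-\beta}\partial_t\psi$ are bounded by $\delta\|\partial_t\psi\|_{H^2_x}^2+C_\delta(T)\mathbb E$. The terms from $R_2$ are handled as follows:
\begin{itemize}
\item $u\cdot\nabla_x b$ and $b\cdot\nabla_x u$ give $C(T)\|b\|_{H^3_x}\|\partial_t\psi\|_{H^2_x}$;
\item the kinetic term $\langle v\cdot\nabla_x((\mathbf{I-P})g\sqrt{M_u}),v\rangle_v$ at level $\partial_x^\alpha$, $|\alpha|\le 2$, costs $C(T)\|(\mathbf{I-P})g\|_{H^3_xL^2_v}$.
\end{itemize}
The $\varepsilon\partial_t(u\cdot\nabla_x\psi)$ term contains $\varepsilon\, u\cdot\nabla_x\partial_t\psi$. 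Against $\partial_t\psi$ it is bounded by $\delta\varepsilon\|\nabla_x\partial_t\psi\|_{H^2_x}^2+C\varepsilon\|\partial_t\psi\|^2_{H^2_x}$. The piece $\varepsilon\,\partial_tu\cdot\nabla_x\psi$ is controlled using $\varepsilon\|\partial_t\psi\|_{H^2_x}^2\le\mathbb E$ and $\|\partial_t u_0\|\le C(T)$. Finally, $\varepsilon\partial_tR_3$ is bounded by $\delta\|\partial_t\psi\|^2_{H^2_x}+C_\delta(T)\varepsilon^2$, using the regularity \eqref{eq:regu limit} with $k=2$.

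\emph{Step 2 ($b$ estimate).} Apply $\partial_x^\alpha$ with $|\alpha|\le 2$ to $\eqref{eq:error}_2$ and test against $\partial_x^\alpha\partial_t b$. This yields
\[
\|\partial_x^\alpha\partial_tb\|_{L^2_x}^2+\frac{1}{2\varepsilon}\frac{d}{dt}\|\partial_x^\alpha b\|_{L^2_x}^2 .
\]
The cross terms are bounded as follows:
\begin{itemize}
\item $\int\partial_x^\alpha((a+1)\partial_t\psi)\cdot\partial_x^\alpha\partial_tb\le\frac14\|\partial_tb\|_{H^2_x}^2+C_1(\bar\rho)\|\partial_t\psi\|_{H^2_x}^2$;
\item $\nabla_x\phi$ is bounded by $\delta\|\partial_t b\|^2_{H^2_x}+C\|\phi\|_{H^3_x}^2$;
\item $(a+1)u\cdot\nabla_x\psi$ and the $R_2,R_3$ terms are bounded by $\delta\|\partial_tb\|^2_{H^2_x}+C(T)(\mathbb E+\|(\mathbf{I-P})g\|^2_{H^3_xL^2_v}+1)$.
\end{itemize}
Here $\|R_3\|_{H^2_x}$ is only $O(1)$. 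I would absorb it as $C(T)\varepsilon$ after observing that $R_3=-\nabla_x(P_0-\rho_0)+\Delta_x u_0$ by \eqref{limit}. This gives $\int\partial_x^\alpha R_3\cdot\partial_x^\alpha\partial_tb=-\frac{d}{dt}\int\partial_x^\alpha\nabla_x\cdot(\ldots)$-type terms, or one pairs it with $\partial_tb$ via an integration in time. This bookkeeping needs care and is where I expect friction.

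\emph{Step 3 (combination and main obstacle).} Choose $K>2C_1(\bar\rho)$ and add $K\times$(Step 1) to Step 2. The damping $K\int(a+2)|\partial_t\psi|^2\ge \tfrac32K\|\partial_t\psi\|^2$ then absorbs the cross term $C_1\|\partial_t\psi\|_{H^2_x}^2$. With $\delta$ small, integrate over $(0,t)$. The cross terms $\|b\|_{H^3_x}\|\partial_t\psi\|_{H^2_x}$ are bounded by $\delta\|\partial_t\psi\|^2+C\mathbb E$, since $\|b\|_{H^3_x}^2\le\mathbb E$. All remaining terms are $C(T)\int_0^t(\mathbb E+\|(\mathbf{I-P})g\|^2_{H^3_xL^2_v})\,ds+C(T)\varepsilon$, and the initial terms are bounded by $C(T)\mathbb E(0)$. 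This gives the lemma.

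The main obstacle is that the $\partial_t\psi$–$\partial_tb$ coupling has an $O(1)$, possibly large coefficient $a+1$, so no smallness is available. The plan handles it solely through the weight $K$ and the uniform lower bound $a+2\ge 3/2$ of the wave damping. The second delicate point is the $O(1)$ forcing $R_3$ in Step 2, which must be shown to contribute only $O(\varepsilon)$.
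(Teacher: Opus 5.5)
Your plan is correct in outline and is a mild variant of the paper's proof. The paper applies $\partial_x^\alpha$ to $\eqref{eq:error}_2$ and tests against $\partial_x^\alpha\partial_t b/(a+1)$, not against $\partial_x^\alpha\partial_t b$. This makes the leading coupling exactly $\int\partial_x^\alpha\partial_t\psi\cdot\partial_x^\alpha\partial_t b\,\mathrm{d}x$. Added to the identical term from $\eqref{eq:error}_4$ tested against $\partial_x^\alpha\partial_t\psi$, it gives $2\int\partial_x^\alpha\partial_t\psi\cdot\partial_x^\alpha\partial_t b\,\mathrm{d}x$. Through \eqref{eq:bt}, all of this becomes $\varepsilon\frac{\mathrm{d}}{\mathrm{d}t}\|\partial_x^\alpha\partial_t\psi\|_{L^2_x}^2+2\varepsilon\|\nabla_x\partial_x^\alpha\partial_t\psi\|_{L^2_x}^2$ plus errors (see \eqref{eq:btpsit-0}).

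So in the paper the possibly large coupling is never estimated: it is itself the source of the Kelvin--Voigt energy and dissipation, and no large constant is needed. Your Step 1 is exactly the half of this computation that comes from $\eqref{eq:error}_4$. You discard the other copy by Young's inequality and a large multiple $K$ of the wave estimate, relying on $a+2\geq 3/2$. Both routes close. Yours avoids the weight $1/(a+1)$, and hence the $\partial_t a$ term $I_{41}$ and the weighted commutators. The price is that $K$ depends on $\|\nabla_x a\|_{L^\infty_x}$ and $\|\nabla_x^2 a\|_{L^3_x}$ for $|\alpha|\geq 1$, so on $T$, not only on $\bar\rho$.

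Two points need repair.

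\textbf{Commutators in Step 1.} The terms $\partial_x^\beta a\,\partial_x^{\alpha-\beta}\partial_t\psi$ are not bounded by $\delta\|\partial_t\psi\|_{H^2_x}^2+C_\delta(T)\mathbb{E}$. Since $\nabla_x a=\nabla_x\rho_0+\nabla_x\phi$ and $\nabla_x\rho_0$ is of size $O(1)$, for $|\alpha|\geq 1$ they cost $\delta\|\nabla_x^{|\alpha|}\partial_t\psi\|_{L^2_x}^2+C_\delta(T)\|\partial_t\psi\|_{H^{|\alpha|-1}_x}^2$. The quantity $\|\partial_t\psi\|_{L^2_x}^2$ is not controlled by $\mathbb{E}$, which contains only $\varepsilon\|\partial_t\psi\|_{H^2_x}^2$. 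You must proceed order by order, since the zero-order estimate has no commutator, and weight lower orders by larger constants. This is what the paper does: its $I_{51}$ produces $C_\delta(T)\|\partial_t\psi\|_{L^2_x}^2$, which is controlled in time by the zero-order step.

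\textbf{The forcing $R_3$ in Step 2.} Your identity for $R_3$ is wrong. By \eqref{limit}, $R_3=-\partial_t u_0-u_0\cdot\nabla_x u_0-\nabla_x(P_0-\rho_0)+\Delta_x u_0$. A gradient structure would not help anyway, because $\partial_t b$ is not divergence-free. The alternative you mention, integration by parts in time, is the right one and is the paper's treatment of $K_1$ in \eqref{eq:I4345-2}. Write $-\int\partial_x^\alpha R_3\cdot\partial_x^\alpha\partial_t b\,\mathrm{d}x=-\frac{\mathrm{d}}{\mathrm{d}t}\int\partial_x^\alpha R_3\cdot\partial_x^\alpha b\,\mathrm{d}x+\int\partial_x^\alpha\partial_t R_3\cdot\partial_x^\alpha b\,\mathrm{d}x$. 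The boundary term is at most $\delta\|b\|_{H^2_x}^2/\varepsilon+C_\delta(T)\varepsilon$, which the energy $\|b\|_{H^2_x}^2/(2\varepsilon)$ absorbs. The bulk term is at most $C(T)\|b\|_{H^2_x}\leq C(T)\varepsilon+\mathbb{E}$. This uses $\partial_t R_3\in C([0,T];H^2_x)$, which follows from \eqref{eq:regu limit} with $k=2$.
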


	\begin{proof}
    \textbf{Step 1: Zero-th order derivative estimate.} Multiplying $\eqref{eq:error}_2 $ and $\eqref{eq:error}_4$  by $\frac{\pt b}{a+1}$ and $ \pt \psi$, respectively, integrating over $ \R^3$, we obtain
		\begin{equation}\label{eq:bL2}
		\begin{aligned}
		&\frac{1}{2}\dt{t}\intx{\frac{|b|^2}{\varepsilon(a+1)}}+\intx{\frac{|\pt b|^2}{a+1}}+\frac{1}{2}\dt{t}\Lx{\nabla_x\psi}^2+\intx{(a+2)|\p_t\psi|^2}+\underbrace{\intx{2\p_t\psi\cdot \p_tb}}_{I_{40}}\\
		=&\underbrace{-\intx{\frac{|b|^2\pt a}{2\varepsilon(a+1)^2}}}_{I_{41}}\underbrace{-\intx{[(a+1)u\cdot\nabla_x\psi+\nabla_x\phi]\cdot \frac{\pt b}{a+1}}}_{I_{42}}\underbrace{-\intx{(R_2+R_3)\cdot \frac{\pt b}{a+1}}}_{I_{43}}\\
		&\underbrace{-\intx{[(a+2)u\cdot\nabla_x\psi+\nabla_x\phi]\cdot \p_t\psi}}_{I_{44}}\underbrace{-\intx{(R_2+R_4)\cdot \p_t\psi}}_{I_{45}}.
		\end{aligned}
		\end{equation}
		
		We first estimate $I_{40}$. Using $\eqref{eq:error}_3$, we have
		\begin{equation}\label{eq:111}
		\p_t b=\varepsilon \p_{tt}\psi-\varepsilon\Delta_x\p_t\psi+\varepsilon\p_t(u\cdot\nabla_x\psi)+\varepsilon\p_t\nabla_x(P+\rho_0-P_0)-\varepsilon\p_tR_3+\varepsilon\p_tR_4.
		\end{equation}
		Substituting \eqref{eq:111} into $\intx{2\p_t\psi\cdot \p_tb}$ yields
		\begin{equation}\label{eq:btpsit-0}
		\begin{aligned}
		&\intx{2\p_t\psi\cdot \p_tb}\\
		=&\intx{ 2\p_t\psi\cdot \left(\varepsilon \p_{tt}\psi-\varepsilon\Delta_x\p_t\psi+\varepsilon\p_t(u\cdot\nabla_x\psi)+\varepsilon\p_t\nabla_x(P+\rho_0-P_0)-\varepsilon\p_tR_3+\varepsilon\p_tR_4 \right) }\\
		=&\ \varepsilon\dt{t}\intx{|\p_t\psi|^2}+2\varepsilon \intx{|\nabla_x \p_t\psi|^2}+\underbrace{\varepsilon\intx{ 2\p_t\psi\cdot \p_t(u\cdot\nabla_x\psi) }}_{J_{21}}\\
		&+\underbrace{\varepsilon\intx{2 \p_t\psi\cdot \p_t(-R_3+R_4) }}_{J_{22}}.
		\end{aligned}
		\end{equation}
		For $J_{21}$, by integrating by parts, we have
		\begin{equation}\label{eq:J21}
		\begin{aligned}
		\left|J_{21}\right|=&\ \varepsilon\left|\intx{ \p_t\psi\cdot \p_t(u\cdot\nabla_x\psi) }\right|\\[5pt]
		\leq & \ C\varepsilon\left|\intx{ \p_t\psi\cdot (\p_t (\psi+u_0)\cdot\nabla_x\psi) }\right|\\[5pt]
		\leq&\ C\varepsilon\Hxx{\p_t\psi}\Big(\Hxx{\p_t\psi}+\Hxx{\p_t u_0}\Big)\Hxx{\nabla_x\psi}
		\leq C(T)\E(t).
		\end{aligned}
		\end{equation}
		For $J_{22}$, we have
		\begin{equation}\label{eq:J22}
		\begin{aligned}
		\left|J_{22}\right|=&\left| \varepsilon\intx{ \p_t\psi\cdot \p_t(R_3 +R_4)} \right|\\[5pt]
		=&\left| \varepsilon\intx{\p_t\psi\cdot \p_t\big( (\rho_0+1)\p_t u_0+(\rho_0+1)u_0\cdot\n_xu_{0}+\n_x\rho_0-\psi\cdot\nabla_xu_0\big) } \right|\\[5pt]
		\leq&\ C(T)\varepsilon\Hxx{\p_t\psi}^2+C(T)\varepsilon\\[5pt]
		\leq&\ C(T)\E(t)+C(T)\varepsilon.
		\end{aligned}
		\end{equation}
		Combining \eqref{eq:btpsit-0}-\eqref{eq:J22}, we find
		\begin{equation}\label{eq:bH3-left}
		\begin{aligned}
		\intx{2\p_t\psi\cdot \p_tb}\geq&\ \varepsilon\dt{t}\intx{|\p_t\psi|^2}+2\varepsilon \intx{|\nabla_x \p_t\psi|^2}-\left(C(T)\E(t) +C(T)\varepsilon\right).
		\end{aligned}
		\end{equation}
		Next, we proceed to estimate $I_{41}$ to $I_{45}$. For $I_{41},\ I_{42}$ and $I_{44}$, we have
		\begin{equation}\label{eq:I414244}
		\begin{aligned}
		|I_{41}|+|I_{42}|+|I_{44}|	\leq&\ \delta \Big(\Lx{\p_tb}^2+\Lx{\p_t\psi}^2\Big) + C_\delta (T)\E(t).
		\end{aligned}
		\end{equation}
		For $I_{43}$ and $I_{45}$, we obtain
		\begin{equation}\label{eq:I4345-1}
		\begin{aligned}
		&I_{43}+I_{45}\\[5pt]
        =&-\intx{(R_2+R_3)\cdot \frac{\pt b}{a+1}}-\intx{(R_2+R_4)\cdot \p_t\psi}\\[5pt]
		\leq&\left|\intx{\left(\phi \p_t u_0+(a+1)\psi\cdot\n_xu_0+\phi u_0\cdot\n_xu_0+u\cdot \n_x b \right) \cdot \left(\frac{ \pt b}{a+1}+\p_t\psi\right)} \right|\\[5pt]
		&+\left|\intx{\left( b\cdot\n_xu+\l v\cdot\n_x((\mathbf{I-P})g\M),v-u\r_v\right) \cdot \left(\frac{\pt b}{a+1}+\p_t\psi\right)} \right|\\[5pt]
		&+\left|\intx{\psi\cdot\nabla_xu_0\cdot\p_t\psi} \right|\underbrace{-\intx{\left((\rho_0+1)\p_t u_0+(\rho_0+1)u_0\cdot\n_xu_{0}+\n_x\rho_0 \right) \cdot \frac{ \pt b}{a+1}} }_{K_{1}}\\[5pt]
		\leq&\ \delta\Big(\Lx{\p_tb}^2+\Lx{\p_t\psi}^2\Big) + C_\delta (T)\E(t)+C_\delta (T)\Hxv{\mi g}^2+K_1.
		\end{aligned}
		\end{equation}
	For $K_{1} $, we have
	\begin{equation}\label{eq:I4345-2}
	\begin{aligned}
	K_{1}=&-\intx{\left((\rho_0+1)\p_t u_0+(\rho_0+1)u_0\cdot\n_xu_{0}+\n_x\rho_0 \right) \cdot\frac{\pt b}{a+1}}\\[5pt]
	=&-\dt{t}\intx{\left((\rho_0+1)\p_t u_0+(\rho_0+1)u_0\cdot\n_xu_{0}+\n_x\rho_0 \right) \cdot \frac{b}{a+1}}\\[5pt]
	&+\intx{\pt\left[\left((\rho_0+1)\p_t u_0+(\rho_0+1)u_0\cdot\n_xu_{0}+\n_x\rho_0 \right)\frac{1}{a+1} \right]\cdot b}\\[5pt]
	\leq&-\dt{t}\intx{R_3\cdot \frac{b}{a+1}}+C(T)\varepsilon+C\E(t).
	\end{aligned}
	\end{equation}	
	Hence, combining \eqref{eq:bL2}, \eqref{eq:bH3-left}, \eqref{eq:I414244}, \eqref{eq:I4345-1} and \eqref{eq:I4345-2}, we have
		\begin{equation}\label{eq:btpsit}
		\begin{aligned}
			&\frac{1}{2}\dt{t}\intx{\frac{|b|^2}{\varepsilon(a+1)}}+\intx{\frac{|\pt b|^2}{a+1}}+\frac{1}{2}\dt{t}\Lx{\nabla_x\psi}^2+\intx{(a+2)|\p_t\psi|^2}\\[5pt]
		&\qquad \qquad \qquad +\varepsilon\dt{t}\intx{|\p_t\psi|^2}+2\varepsilon \intx{|\nabla_x \p_t\psi|^2}+\dt{t}\intx{R_3 \cdot \frac{b}{a+1}}\\[5pt]
		\leq&\ C_\delta (T)\E(t)+C_\delta (T)\varepsilon+\delta(\Lx{\p_tb}^2+\Lx{\p_t\psi}^2)+C_\delta (T)\Hxv{\mi g}^2.
		\end{aligned}
		\end{equation}

		By further choosing $\delta$ sufficiently small, integrating \eqref{eq:btpsit} over $(0,t)$ and considering \eqref{eq:a lower bound}, we can obtain
		\begin{equation*}
		\begin{aligned}
		&\frac{\Lx{b}^2}{\varepsilon}+\Lx{\nabla_x\psi}^2+\varepsilon\Lx{\p_t\psi}^2+\int_0^t{\left( \Lx{\p_tb}^2+\Lx{\p_t\psi}^2+\varepsilon\Lx{\nabla_x\p_t\psi}^2\right) }\d s\\
		\leq&\ C\E(0)+C(T)\varepsilon+ C(T)\int_{0}^t\left(\E(s)+\Hxv{\mi g}^2 \right)\d s,
		\end{aligned}
		\end{equation*}
		where we also use
		\begin{equation*}
		\left|\intx{R_3 \cdot \frac{b}{a+1}}\right|\leq \delta\frac{\Lx{b}^2}{\varepsilon}+C_\delta (T)\varepsilon.
		\end{equation*}

        \textbf{Step 2: Higher-order derivative estimates.}
		To estimate the first-order derivative terms, we apply $\p_x^\a$ $(|\a|=1)$ to $\eqref{eq:error}_2 $ and $\eqref{eq:error}_4$, multiply by $\frac{\px^\a \pt b}{a+1}$ and $\px^\a \pt \psi$, respectively, and then integrate them over $ \R^3$,
		\begin{equation}\label{I51only}
		\begin{aligned}
		&\frac{1}{2}\dt{t}\intx{\frac{|\px^\a b|^2}{\varepsilon(a+1)}}+\intx{\frac{|\px^\a \pt b|^2}{a+1}}+\frac{1}{2}\dt{t}\Lx{\p_x^\a\nabla_x\psi}^2\\[5pt]
		&\qquad \qquad \qquad \qquad \qquad +\intx{(a+2)|\p_x^\a\p_t\psi|^2}+\intx{2\p_x^\a\p_t\psi\cdot \p_x^\a\p_tb}\\[5pt]
		=&\underbrace{-\intx{\p_x^\a a\, \p_t\psi\cdot\frac{\px^\a \pt b}{a+1}}-\intx{\p_x^\a a\, \p_t\psi\cdot\p_x^\a\p_t\psi}}_{I_{51}}-\intx{\frac{|\px^\a b|^2 \pt a}{\varepsilon(a+1)^2}}\\[5pt]
		&-\intx{\p_x^\a[(a+1)u\cdot\nabla_x\psi+\nabla_x\phi]\cdot \frac{\px^\a \pt b}{a+1}}-\intx{\p_x^\a(R_2+R_3)\cdot \frac{\px^\a \pt b}{a+1}}\\[5pt]
		&-\intx{\p_x^\a[(a+2)u\cdot\nabla_x\psi+\nabla_x\phi]\cdot \p_x^\a\p_t\psi}-\intx{\p_x^\a(R_2+R_4)\cdot\p_x^\a \p_t\psi}.
		\end{aligned}
		\end{equation}
		Here, we only detail the estimate for $I_{51}$:
		\begin{equation*}
		\begin{aligned}
		I_{51}=&-\intx{\p_x^\a a\, \p_t\psi\cdot\frac{\p_x^\a\, \p_tb}{a+1}}-\intx{\p_x^\a a\, \p_t\psi\cdot\p_x^\a\p_t\psi}\\[5pt]
		\leq&\ \delta \Big(\Lx{\p_x^\a \p_tb}^2+\Lx{\p_x^\a\p_t \psi}^2\Big) + C_\delta (T)\Lx{\p_t\psi}^2.
		\end{aligned}
		\end{equation*}
		and by analogous estimates for the remaining terms on the right side of \eqref{I51only}, we have
		\begin{equation}\label{eq:btpsit-H1}
		\begin{aligned}
   	&\frac{1}{2}\dt{t}\intx{\frac{|\px^\a b|^2}{\varepsilon(a+1)}}+\intx{\frac{|\px^\a \pt b|^2}{a+1}}+\frac{1}{2}\dt{t}\Lx{\p_x^\a\nabla_x\psi}^2\\[5pt]
		&+\intx{(a+2)|\p_x^\a\p_t\psi|^2}+\varepsilon\dt{t}\intx{| \px^\a \p_t\psi|^2}+2\varepsilon \intx{|\nabla_x \px^\a\p_t\psi|^2}\\[5pt]
		\leq&\ C_\delta (T)\E(t)+\delta\varepsilon\Lx{\nabla^2_x\p_t\psi}^2 +C_\delta(T)\varepsilon + \delta\Big(\Lx{\p_x^\a\p_tb}^2+\Lx{\p_x^\a\p_t\psi}^2\Big)\\[5pt]
		&+C_\delta(T)\Hxxv{\mi g}^2+C_\delta(T)\Lx{\p_t\psi}^2,
		\end{aligned}
		\end{equation}
      where, note that the energy and dissipation terms
      \[\varepsilon\dt{t}\intx{\vert{} \px^\a \p_t\psi\vert{}^2}+2\varepsilon \intx{\vert{}\nabla_x \px^\a\p_t\psi\vert{}^2}\]
      arise from $\intx{2\p_x^\a\p_t\psi\cdot \p_x^\a\p_tb}$ by a similar argument as in \eqref{eq:btpsit-0}.

		By further integrating \eqref{eq:btpsit-H1} over $(0,t)$ and considering \eqref{eq:a lower bound}, we obtain
		\begin{equation*}\label{H1-x-b}
		\begin{aligned}
		&\frac{\Lx{\nabla_xb}^2}{\varepsilon}+\Lx{\nabla^2_x\psi}^2+\varepsilon\Lx{\nabla_x\p_t\psi}^2+\int_0^t \left( \Lx{\nabla_x\p_tb}^2+\Lx{\nabla_x\p_t\psi}^2+\varepsilon\Lx{\nabla^2_x\p_t\psi}^2\right) \d s\\
		\leq&\ C\E(0)+C(T)\varepsilon+C(T)\int_{0}^t\left( \E(s)+\Hxxv{\mi g}^2 \right)\d s.
		\end{aligned}
		\end{equation*}
        In fact, the second-order estimate can be completed by applying $\p_x^\a$ $(|\a|=2)$ to $\eqref{eq:error}_2 $ and $\eqref{eq:error}_4$ with completely analogous arguments as in the first-order estimate. Thus, we complete $H^2_x$ estimates by collecting $0 \leq |\alpha| \leq 2$, and thereby prove the lemma.
	\end{proof}

	The following lemma aims to establish the dissipation estimate for $\Lx{\nabla_x^5\psi}$, which will play a crucial role in the subsequent estimate of $g$
	\begin{lemma}\label{lem:psiH4}		
    Under the conditions of Theorem \ref{thm1} and the \textit{a priori} assumption \eqref{assumption}, for $t\in [0,T_1] $, one has
		\begin{equation*}
		\varepsilon\Lx{\nabla_x^4\psi}^2+\varepsilon\intt{\Lx{\nabla_x^5\psi}^2}{s}\leq C\E(0)+C(T)\varepsilon+C(T)\intt{\left(\E(s)+\frac{\Lx{\nabla_x^3b}^2}{\varepsilon} \right) }{s},
		\end{equation*}
          where the energy functional $\E(t)$ is given in \eqref{defE} and $C(T) > 1$ is a constant only depending on $T$.
	\end{lemma}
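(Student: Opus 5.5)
We use the fluid error equation $\eqref{eq:error}_3$, which is a forced Stokes-type equation for $\psi$ whose only singular forcing is $b/\varepsilon$. The plan is a standard parabolic energy estimate at the fourth derivative level, multiplied by $\varepsilon$ so that the singular forcing is controlled by the dissipation $\Lx{\nabla_x^3 b}^2/\varepsilon$ on the right-hand side.

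Concretely, we apply $\partial_x^\alpha$ with $|\alpha|=4$ to $\eqref{eq:error}_3$, multiply by $\varepsilon\partial_x^\alpha\psi$, and integrate over $\R^3$. Since $\div_x\psi=0$, the pressure term $\n_x(P+\rho_0-P_0)$ drops out. The viscous term gives $\varepsilon\Lx{\nabla_x\partial_x^\alpha\psi}^2$, and the time derivative gives $\frac{\varepsilon}{2}\dt{t}\Lx{\partial_x^\alpha\psi}^2$.

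The singular term is handled by moving one derivative off $b$:
\[
\left|\intx{\partial_x^\alpha b\cdot\partial_x^\alpha\psi}\right|\le \Lx{\nabla_x^3 b}\,\Lx{\nabla_x^5\psi}\le \frac{\varepsilon}{4}\Lx{\nabla_x^5\psi}^2+C\frac{\Lx{\nabla_x^3 b}^2}{\varepsilon}.
\]
Here the factor $\varepsilon\cdot\frac1\varepsilon$ cancels exactly, which is why the weight $\varepsilon$ is needed.

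The convection term is $\varepsilon\intx{\partial_x^\alpha(u\cdot\n_x\psi)\cdot\partial_x^\alpha\psi}$ with $u=\psi+u_0$. Its top-order part $u\cdot\n_x\partial_x^\alpha\psi$ vanishes by $\div_x u=0$. The commutators are bounded by $C\varepsilon(\Hthreex{\nabla_x\psi}+\Hfourx{u_0}+\Hthreex{\psi})\,(\|\nabla_x\psi\|_{H^3_x}^2+\ldots)$. Alternatively, we integrate one derivative by parts onto $\nabla_x^5\psi$ and absorb it with Young's inequality, leaving $C(T)\E(t)$. This uses $\varepsilon\Hxxx{\nabla_x\psi}^2\le\E$, the a priori bound \eqref{assumption}, and the regularity \eqref{eq:regu limit} of $u_0$ in $H^6_x$.

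For the forcing terms $R_3-R_4$, we again move one derivative onto $\partial_x^\alpha\psi$:
\[
\varepsilon\intx{\partial_x^{\alpha}(R_3-R_4)\cdot\partial_x^\alpha\psi}\le \frac{\varepsilon}{4}\Lx{\nabla_x^5\psi}^2+C\varepsilon\big(\Hthreex{R_3}^2+\Hthreex{\psi}^2\Hfourx{u_0}^2\big).
\]
Since $R_3$ involves $\partial_tu_0\in H^2$ only, we should be careful: $\nabla^3 R_3$ requires $\partial_t u_0\in H^3_x$, which holds because $k=1$ gives $H^4_x$. This produces $C(T)\varepsilon+C(T)\E(t)$.

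Summing over $|\alpha|=4$ and integrating in time gives the claim. We use $\varepsilon\Lx{\nabla_x^4\psi(0)}^2\le\E(0)$, since $\varepsilon\Hxxx{\nabla_x\psi}^2$ is part of $\E$.

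The main obstacle is the commutator in the convection term. There, $\nabla_x^4 u_0$, and terms such as $\partial_x^\alpha\psi\cdot\nabla\psi$ with $\nabla^4\psi$ paired against $\nabla^4\psi$, must be bounded without producing unweighted $\Lx{\nabla_x^4\psi}^2$ beyond what $\E$ controls. Because the whole identity carries the factor $\varepsilon$, every such term is of the form $\varepsilon\Lx{\nabla^4\psi}^2\times C(T)\le C(T)\E$, so the estimate closes.
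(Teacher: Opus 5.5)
Your proposal is correct and is the paper's own argument, which the paper states in one line: apply $\partial_x^\alpha$ with $|\alpha|=4$ to $\eqref{eq:error}_3$, test with $\varepsilon\partial_x^\alpha\psi$, and integrate in $x$ and $t$, with the singular term handled by shifting one derivative from $\partial_x^\alpha b$ onto $\nabla_x^5\psi$ so that the factors $\varepsilon\cdot\varepsilon^{-1}$ cancel, as you do. The one imprecision is your first crude commutator bound: its unweighted factor $\Hthreex{\nabla_x\psi}$ is not uniform in $\varepsilon$. The refined count you give at the end is the correct one: at most two factors reach $\nabla_x^4\psi$ and are absorbed by the weight $\varepsilon$, while the remaining factor is controlled by $\Hthreex{\psi}$ or by $u_0\in H^6_x$.
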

	\begin{proof}
		This lemma can be proved by directly applying $\p^\a_x$ with $ |\a|=4 $ to  $\eqref{eq:error}_3$, multiplying by $\v\p^\a_x\psi$, and integrating over $x, t$.
	\end{proof}

The following corollary is a summary of Corollary \ref{lem:phi b psi psit} and Lemmas \ref{lem:bt psit}, \ref{lem:psiH4}.

\begin{corollary}
\label{coro:ma total}	
Under the conditions of Theorem \ref{thm1} and the \textit{a priori} assumption \eqref{assumption}, for $t\in [0,T_1] $, we have
	\begin{equation}\label{eq:ma total}
	\begin{aligned}
	&\Hxxx{(\phi,\psi,b)}^2+\frac{\Hxx{b}^2}{\varepsilon}+\varepsilon\Hxx{\p_t\psi}^2+\varepsilon\Hxxx{\nabla_x\psi}^2\\
	&+\int_0^t\left( \Hxxx{\nabla_x\psi}^2+\varepsilon\Hxxxx{\nabla_x\psi}^2+\frac{\Hxxx{b}^2}{\varepsilon}+\Hxx{\p_tb}^2+\Hxx{\p_t\psi}^2+\varepsilon\Hxx{\nabla_x\p_t\psi}^2\right)\d s\\
	\leq& C(T)\int_0^t \left(  \E(s)+\varepsilon\Hxxxxv{\mi g}^2+\Hxxxv{\mi g}^2\right)\d s+C(T)\E(0)+C(T)\varepsilon,
	\end{aligned}
	\end{equation}
where the energy functional $\E(t)$ is given in \eqref{defE} and $C(T) > 1$ is a constant only depending on $T$.
\end{corollary}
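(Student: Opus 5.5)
The plan is to add the three preceding estimates with suitable weights and absorb the $\partial_t\psi$ and $\nabla_x^3 b$ terms that Corollary \ref{lem:phi b psi psit} and Lemma \ref{lem:psiH4} leave on their right-hand sides into the dissipation supplied by Lemma \ref{lem:bt psit}.

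\textbf{Step 1.} Start from \eqref{eq:phi b psi psit} in Corollary \ref{lem:phi b psi psit}, with $\delta$ still free. Its right-hand side has three kinds of bad terms:
\begin{itemize}
\item the time integrals $\delta\int_0^t\Hxx{\p_t\psi}^2$ and $\delta\varepsilon\int_0^t\Hxxx{\p_t\psi}^2$;
\item the time integral $C_\delta(T)\varepsilon^2\int_0^t\Hxxx{\p_t\psi}^2$;
\item the pointwise term $C\varepsilon\Hxx{\pt\psi}^2$.
\end{itemize}
Since $\Hxxx{\p_t\psi}^2\le \Hxx{\p_t\psi}^2+\Hxx{\nabla_x\p_t\psi}^2$, all three are controlled by the left-hand side of Lemma \ref{lem:bt psit}, which contains $\varepsilon\Hxx{\p_t\psi}^2$ and $\int_0^t(\Hxx{\p_t\psi}^2+\varepsilon\Hxx{\nabla_x\p_t\psi}^2)$.

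\textbf{Step 2.} Multiply Lemma \ref{lem:bt psit} by a large constant $K$ (say $K=2C+2$, where $C$ is the constant in front of $\varepsilon\Hxx{\pt\psi}^2$) and add it to \eqref{eq:phi b psi psit}. The pointwise term is then absorbed. Next fix $\delta$ small relative to $K$, so that
\[
\delta\int_0^t\Big(\Hxx{\p_t\psi}^2+\varepsilon\Hxxx{\p_t\psi}^2\Big)
\]
is absorbed. For the remaining term, use $\varepsilon\le\varepsilon_0(T)$ to get
\[
C_\delta(T)\varepsilon^2\Hxxx{\p_t\psi}^2\le C_\delta(T)\varepsilon_0\cdot\varepsilon\Hxxx{\p_t\psi}^2,
\]
and absorb it as well. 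If smallness of $\varepsilon$ should be avoided, one can instead bound it by $C(T)\E$, since $\varepsilon\Hxx{\p_t\psi}^2\le\E$, but then the gradient part $\varepsilon^2\Hxx{\nabla_x\p_t\psi}^2$ needs the dissipation. I will take the route through $\varepsilon_0$, which is legitimate because $\varepsilon_0=\varepsilon_0(T)$ is allowed. This step, where the constant $C_\delta(T)$ depends on $\delta$ while $\delta$ must itself be small relative to $K$, is the delicate bookkeeping point and the main obstacle. The order of choices must be: first $K$, then $\delta$, then $\varepsilon_0$.

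\textbf{Step 3.} Add Lemma \ref{lem:psiH4}, which brings in
\[
\varepsilon\Lx{\nabla_x^4\psi}^2+\varepsilon\int_0^t\Lx{\nabla_x^5\psi}^2.
\]
Together with the $\varepsilon\int_0^t\Hxxx{\nabla_x\psi}^2$ already available, this gives $\varepsilon\int_0^t\Hxxxx{\nabla_x\psi}^2$. Its right-hand side contains $C(T)\int_0^t\Lx{\nabla_x^3b}^2/\varepsilon$, which is not small. To handle it, multiply the sum from Step 2 by a further large constant $K'\ge 2C(T)$ before adding Lemma \ref{lem:psiH4}. The dissipation $\int_0^t\Hxxx{b}^2/\varepsilon$ from Corollary \ref{lem:phi b psi psit} then absorbs this term.

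\textbf{Step 4.} Collect what remains. The right-hand side now consists of
\[
C(T)\int_0^t\Big(\E+\varepsilon\Hxxxxv{\mi g}^2+\Hxxxv{\mi g}^2\Big)\d s+C(T)\E(0)+C(T)\varepsilon.
\]
The left-hand side contains every term listed in \eqref{eq:ma total}: the terms $\Hxx{b}^2/\varepsilon$, $\varepsilon\Hxx{\p_t\psi}^2$, $\int_0^t\Hxx{\p_tb}^2$ and related dissipation come from Lemma \ref{lem:bt psit}, and the rest from Corollary \ref{lem:phi b psi psit} and Lemma \ref{lem:psiH4}. This yields \eqref{eq:ma total}.
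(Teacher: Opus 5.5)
Your proposal is correct and follows the paper's route. You add a large multiple of Lemma \ref{lem:bt psit} to Corollary \ref{lem:phi b psi psit}, so that the energy $\varepsilon\|\partial_t\psi\|_{H^2_x}^2$ and the dissipation of $\partial_t\psi$ and $\varepsilon\nabla_x\partial_t\psi$ absorb the $\partial_t\psi$ terms (taking $\delta$ and then $\varepsilon$ small); you then add a small multiple of Lemma \ref{lem:psiH4}, whose $\|\nabla_x^3 b\|_{L^2_x}^2/\varepsilon$ term is absorbed by the $b$-dissipation of the corollary. The paper gives this in one line, and your write-up only makes the bookkeeping of the constants explicit.
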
	
\begin{proof}
	The corollary is a direct consequence of  Corollary \ref{lem:phi b psi psit} and Lemmas \ref{lem:bt psit}, \ref{lem:psiH4} by selecting sufficiently small $\delta$ and $\varepsilon$.
\end{proof}

%---------------------

By observing the estimate \eqref{eq:ma total}, we find it remains to estimate $\varepsilon\Hxxxxv{\mi g}^2$ and $\Hxxxv{\mi g}^2$ in order to close the total energy estimate.

Now, we first derive the dissipation estimate for $\varepsilon\Hxxxxv{\mi g}^2$.
To simplify the analysis, rather than deriving estimates directly for $g$, we introduce
\[
f=M_u+g\sqrt{M_u}=h\sqrt{M_u} \quad \text{and}\quad \rho=\int h \sqrt{M_u} \d v,
\]
in fact, a direct calculation reveals that
\begin{equation}\label{eq:relation-g-h}
\rho=a+1, \quad (\mathbf{I-P})h=(\mathbf{I-P})g, \quad h=\rho\sqrt{M_u}+b\cdot(v-u)\sqrt{M_u}+(\mathbf{I-P})h,
\end{equation}
and
\begin{equation*}
\int  h(v-u)\M \d v=\int (g+\M)(v-u)\M \d v = b,
\end{equation*}
such that $h$ are derived to satisfy:
\begin{equation}
\left\{
\begin{array}{ll}
\label{h u}
\pt h+v\cdot \nabla_xh+\frac{1}{\varepsilon}\mathbf{L}h+\frac{1}{2}h(v-u)\cdot \pt u+\frac{1}{2}h(v\cdot\nabla_xu)\cdot(v-u)=0,\\[2mm]
\pt u+ u\cdot\nabla_xu+\nabla_xP -\Delta_x u -\frac{b}{\varepsilon}=0.\\
\end{array}
\right.
\end{equation}

Throughout the subsequent analysis, we denote,
 for $|\a|\geq 1$,
\begin{equation*}
\rho_\a:=\int \px^\a h\M \d v,\ \quad  b_\a:=\int \px^\a h\, (v-u)\M \d v,
\end{equation*}
and we will repeatedly use the relations $\rho=a+1$ and $u=\psi+u_0$, alongside the operator
\begin{equation*}
\mathbf{L}h=-\Delta_vh+\frac{|v-u|^2}{4}h-\frac{3}{2}h,
\end{equation*}
and furthermore, for $1\leq |\alpha|\leq 4$,
\begin{equation}\label{Lx gamma}
\partial^\alpha_x\mathbf{L}h=\mathbf{L}(\partial^\alpha_xh) + \sum\limits_{\beta\leq\alpha,|\b|\neq 0} C^{\beta}_{\alpha} \partial^\beta_x\left(\frac{|v-u|^2}{4}\right)\partial^{\alpha-\beta}_xh.
\end{equation}

For brevity, summation symbols are omitted in the subsequent equations. The indices $\alpha$ and $\beta$ implicitly denote summations over $1 \leq |\alpha| \leq 4$ and $\beta\leq \alpha$ (with $|\beta|\neq 0$), respectively.

\begin{lemma}\label{lem:hH4}
Under the conditions of Theorem \ref{thm1} and the \textit{a priori} assumption \eqref{assumption}, for $t\in [0,T_1] $, we have
\begin{equation}\label{eq:hH4}
\begin{aligned}
&\varepsilon^2\HxxxLv{\nabla_xh}^2+\varepsilon^3\Hthreex{\nabla_x^2 u}^2\\
&+\intt{\left(\varepsilon^3\Hthreex{\nabla_x\pt u}^2+\sum\limits_{1\leq|\alpha|\leq4}\varepsilon(\Ltwox{b_\alpha}^2+\Lv{\mi \px^\alpha h}^2)\right)}{s}\\
\leq&C\E(0)+C_\delta(T)\varepsilon+C_\delta (T)\intt{\E(s)}{s}+C_\delta (T)\intt{\sum\limits_{|\theta|\leq3}\Lv{\px^\theta\mi h}^2}{s}\\
&+C\intt{(\delta\varepsilon\Ltwox{\nabla_x^5\psi}^2+\delta\varepsilon\Hthreex{\pt \psi}^2)}{s},
\end{aligned}
\end{equation}
where the energy functional $\E(t)$ is given in \eqref{defE} and $C_\delta(T) > 1$ is a constant only depending on $\delta,\ T$.
\end{lemma}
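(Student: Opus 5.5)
We plan to derive \eqref{eq:hH4} by a weighted energy estimate on the system \eqref{h u} at derivative orders $1\leq|\alpha|\leq4$, following the strategy announced as Idea II in the introduction.

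\textbf{Step 1: kinetic part.} Apply $\partial_x^\alpha$ to \eqref{h u}$_1$, multiply by $\varepsilon^2\partial_x^\alpha h$ and integrate in $x,v$. The transport term vanishes. For the relaxation term we use the commutator formula \eqref{Lx gamma}, writing $\mathbf L(\partial_x^\alpha h)$ with the decomposition
\[
\partial_x^\alpha h=\rho_\alpha\sqrt{M_u}+b_\alpha\cdot(v-u)\sqrt{M_u}+\mi\partial_x^\alpha h .
\]
By the coercivity of $\mathbf L$ (Lemma \ref{lemma A2}), this yields the dissipation $\varepsilon\big(\Ltwox{b_\alpha}^2+c\Lv{\mi\px^\alpha h}^2\big)$. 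The commutator terms
\[
\varepsilon\,\partial_x^\beta\!\left(\tfrac{|v-u|^2}{4}\right)\partial_x^{\alpha-\beta}h
\]
contain $\partial_x^\beta u$ times a polynomial in $v-u$, and they carry only one power of $\varepsilon$. Two cases arise:
\begin{itemize}
\item When $|\beta|$ is small, we pair $\partial_x^{\alpha-\beta}h$ with $\partial_x^\alpha h$. The macroscopic pieces cancel partly by orthogonality of Hermite moments, and the rest is absorbed into $\delta\varepsilon\Lv{\mi\px^\alpha h}^2$, $\delta\varepsilon\Ltwox{b_\alpha}^2$, and $C_\delta(T)\varepsilon$ times lower norms.
\item When $|\beta|=4$, a factor $\nabla_x^4u=\nabla_x^4(\psi+u_0)$ appears. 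We bound it by $\delta\varepsilon\Ltwox{\nabla_x^5\psi}^2$ after one integration by parts, which explains that term on the right-hand side.
\end{itemize}
The source terms $\tfrac12 h(v-u)\cdot\partial_tu$ and $\tfrac12 h(v\cdot\nabla_x u)\cdot(v-u)$ are handled by the same case split. The top-order factor $\varepsilon^2\partial_x^\alpha\partial_t u$ is the delicate one, treated in Step 2.

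\textbf{Step 2: fluid part and the cross term.} The $\varepsilon^2\partial_x^\alpha\partial_tu$ term, tested against $\partial_x^\alpha h$, produces
\[
\varepsilon^2\l(\rho/2-1)\partial_x^\alpha\partial_tu,\ b_\alpha\r_x \quad\text{(up to lower order)}.
\]
Since $\rho$ may be large, this cannot be absorbed directly. Following Idea II, we remove $b_\alpha$ using the fluid equation \eqref{h u}$_2$, written as $b=\varepsilon(\partial_tu+u\cdot\nabla_xu+\nabla_xP-\Delta_xu)$ and differentiated by $\partial_x^\alpha$, so that $b_\alpha$ equals $\partial_x^\alpha b$ up to commutators. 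Substituting turns the cross term into $\varepsilon^3\l\rho\,\partial_x^\alpha\partial_tu,\ \partial_x^\alpha\partial_tu-\Delta_x\partial_x^\alpha u\r$ plus terms with pressure or lower order.

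We then add a multiple of $\varepsilon^3\times$ [$\partial_x^\alpha$ of \eqref{h u}$_2$ tested against $\rho\,\partial_x^\alpha\partial_tu$]. Integrating $-\Delta_x$ by parts, the combined structure becomes
\[
\tfrac12\tfrac{d}{dt}\Big(\varepsilon^2\|\partial_x^\alpha h\|^2+\tfrac{\varepsilon^3}{2}\|\sqrt\rho\,\partial_x^\alpha\nabla_xu\|^2\Big)+\tfrac{\varepsilon^3}{2}\|\sqrt\rho\,\partial_x^\alpha\partial_tu\|^2+\varepsilon\|b_\alpha\|^2 .
\]
The lower bound $\rho=a+1>1/2$ from \eqref{eq:a lower bound} makes this coercive. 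The remaining commutator terms are controlled as follows:
\begin{itemize}
\item terms involving $\nabla_x\rho$ and $\partial_t\rho$ are bounded through \eqref{eq:a_t Linfty};
\item pressure terms use $\div_x\partial_tu=0$ against $\nabla_x\rho$ after integrating by parts;
\item the transport term $u\cdot\nabla_xu$ is bounded by $C(T)\E(t)$, $\delta\varepsilon\Hthreex{\partial_t\psi}^2$, and $C_\delta(T)\varepsilon$.
\end{itemize}
Here we write $u=\psi+u_0$ and use the regularity \eqref{eq:regu limit} of $u_0$, so that contributions of $u_0$ are $O(\varepsilon)$ thanks to the $\varepsilon^3$ weight.

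\textbf{Step 3: closing.} Summing over $1\leq|\alpha|\leq4$, we integrate in time and use \eqref{eq:relation-g-h} to identify the energy with $\varepsilon^2\HxxxLv{\nabla_xh}^2+\varepsilon^3\Hthreex{\nabla_x^2u}^2$. Lower-order microscopic pieces that appear with an $\varepsilon^{-1}\cdot\varepsilon^2$ weight and are not absorbed are bounded by $\sum_{|\theta|\leq3}\Lv{\px^\theta\mi h}^2$, which gives \eqref{eq:hH4}.

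The main obstacle is Step 2: showing that after the substitution for $b_\alpha$, every residual term involving $\partial_x^\beta\rho$ with $|\beta|\geq1$ at the top order $|\alpha|=4$ can be bounded. The critical case is $\nabla_x^4\rho\cdot\partial_tu$. It has only $\varepsilon^3$ weight and needs $\Hthreex{\phi}$ together with the $L^\infty$ bound on $\partial_t u$. We expect it to be controlled by $C(T)\E(t)$ and $\delta\varepsilon\Hthreex{\partial_t\psi}^2$.
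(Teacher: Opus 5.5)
Your overall route is the paper's, Idea~II. You test $\partial_x^\alpha$\eqref{h u}$_1$ with $\varepsilon^2\partial_x^\alpha h$, take $\varepsilon(\|b_\alpha\|_{L^2_x}^2+c\|\mi\partial_x^\alpha h\|_\nu^2)$ from Lemma~\ref{lemma A2}, and eliminate $b$ from the cross term via \eqref{h u}$_2$ to obtain the $\rho$-weighted energy and dissipation. The structure you reach in Step~2 is correct. One bookkeeping point: the kinetic test alone gives the coefficient $\rho/2$, and the ``$-1$'' appears only after adding $\varepsilon^3\times$\eqref{h u}$_2$ tested with $\partial_x^\alpha\partial_t u$, as the paper does. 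Any such addition is an identity equal to zero once $b$ is substituted, so this is harmless.

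The gap is in Step~1, in the commutators
\[
\varepsilon\, C_\alpha^\beta\Big\langle \partial_x^\beta\Big(\frac{|v-u|^2}{4}\Big)\partial_x^{\alpha-\beta}h,\ \partial_x^\alpha h\Big\rangle_{x,v},\qquad |\alpha|=4,\ 1\leq|\beta|\leq 3 .
\]
These carry only the weight $\varepsilon$, and you propose to close them by direct pairing, orthogonality and absorption.

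Lemma~\ref{lemma A2} gives no dissipation for the density moment $\rho_\alpha$ of $\partial_x^\alpha h$. At $|\alpha|=4$ the energy only gives $\varepsilon\|\rho_\alpha\|_{L^2_x}\leq\sqrt{\E(t)}$. For $|\beta|=1$ the coefficient is $-\frac12(v-u)\cdot\partial_x^\beta u$. Orthogonality kills the pairing of $\rho_\alpha\sqrt{M_u}$ with the $\sqrt{M_u}$- and $\mi$-components of $\partial_x^{\alpha-\beta}h$, but it leaves
\[
\frac{\varepsilon}{2}\,C_\alpha^\beta\int \rho_\alpha\,\partial_x^\beta u\cdot b_{\alpha-\beta}\,\mathrm{d}x .
\]
Here $b_{\alpha-\beta}$ is not small. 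Already for $\partial_x^\gamma=\partial_{x_j}$, $b_\gamma$ contains $\frac12\rho\,\partial_{x_j}u$, which is of order one because $u\approx u_0$. This is why Lemma~\ref{lem:b-b_a} only gives $\varepsilon\|\partial_x^\alpha b-b_\alpha\|_{L^2_x}^2\leq C(T)\varepsilon+C(T)\E$.

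So the best bound is $C(T)\sqrt{\E(t)}$. No dissipation absorbs it, and it is not of the form $C(T)\E+C(T)\varepsilon$. Such a source in the energy inequality allows growth like $C(T)^2t^2$, so neither the bootstrap \eqref{assumption} nor the rate can be closed. The same obstruction occurs for $|\beta|=2,3$: for example, the $v$-independent part $\frac12\partial_x^{\beta_1}u\cdot\partial_x^{\beta_2}u$ of the coefficient pairs $\rho_{\alpha-\beta}=O(1)$ with $\rho_\alpha$.

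The fix, which is what the paper does for $J_{34}$, is the integration by parts in $x$ that you use only for $|\beta|=4$. It must be applied for every $\beta$, for the same reason: to take a derivative off the undissipated top-order factor $\partial_x^\alpha h$.
\begin{itemize}
\item For $|\beta|=1$, write $\partial_x^{\alpha-\beta}h\,\partial_x^\alpha h=\frac12\partial_x^\beta|\partial_x^{\alpha-\beta}h|^2$ and move $\partial_x^\beta$ onto the coefficient.
\item For $2\leq|\beta|\leq4$, move one derivative $\partial_x^{\alpha'}$ with $|\alpha'|=1$ off $\partial_x^\alpha h$.
\end{itemize}
Afterwards only $\partial_x^\theta h$ with $|\theta|\leq3$ appear. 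These are controlled by Lemma~\ref{lem:hx} and by $\sum_{|\theta|\leq 3}\|\partial_x^\theta\mi h\|_\nu^2$, which is where that term on the right of \eqref{eq:hH4} comes from. At $|\beta|=4$ the shifted derivative produces $\nabla_x^5u$, hence $\delta\varepsilon\|\nabla_x^5\psi\|_{L^2_x}^2$.

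It follows that the ``critical case'' in your last paragraph is misplaced. In $\varepsilon^2$-weighted terms the factor $\rho_\alpha$ is harmless, since $\varepsilon^2\|\rho_\alpha\|_{L^2_x}\leq\varepsilon\sqrt{\E}\leq\frac12(\varepsilon^2+\E)$. The dangerous terms are the $\varepsilon$-weighted $\mathbf{L}$-commutators above.

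Also, $\partial_t u$ has no uniform $L^\infty_x$ bound. $\E$ controls only $\varepsilon\|\partial_t\psi\|_{H^2_x}^2$, so $\|\partial_t\psi\|_{L^\infty_x}\lesssim\varepsilon^{-1/2}\sqrt{\E}$. Any use of it must be paid for with the $\varepsilon$-weighted $\partial_t\psi$ norms that appear on the right of \eqref{eq:hH4}.
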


\begin{proof}
	Applying $\partial_{x}^{\alpha}\ (1\leq|\alpha|\leq4)$ to $\eqref{h u}_1$ and $\eqref{h u}_2$, multiplying the results by $\varepsilon^2\partial_{x}^{\alpha}h$ and by $\varepsilon^3\partial_{x}^{\alpha}\pt u$ respectively, then adding them together and integrating by parts, we obtain
	\begin{equation*}
	\begin{aligned}
	&\frac{1}{2}\dt{t}\left(\varepsilon^2\Ltwoxv{\px^\alpha h}^2+\varepsilon^3\Ltwox{\px^\alpha\nabla_xu}^2\right) +\varepsilon^3\Ltwox{\px^\alpha \pt u}^2+\varepsilon\Ltwox{b_\alpha}^2+\varepsilon c\Lv{(\mathbf{I-P})\partial_{x}^\alpha h}^2\\[5pt]
	\leq& \underbrace{-\varepsilon^2\frac{1}{2}\inxv{\px^\alpha[h(v-u)\cdot \pt u],\px^\alpha h}+\varepsilon^2\inx{\px^\alpha b, \px^\alpha \pt u}}_{J_{31}}\underbrace{ -\varepsilon^2\frac{1}{2}\inxv{\px^\alpha[h (v\cdot\nabla_x u)\cdot(v-u)],\px^\alpha h}}_{J_{32}}\\[5pt]
	&\underbrace{-\varepsilon^3\inx{\partial^\alpha_x(u\cdot \nabla_xu),\partial^\alpha_x\pt u}}_{J_{33}} \underbrace{-\varepsilon C^\beta_\alpha\inxv{\partial^\beta_x\left(\frac{|v-u|^2}{4}\right)\partial^{\alpha-\beta}_xh,\partial^\alpha_xh}}_{J_{34}},
	\end{aligned}
	\end{equation*}	
    where we have used \eqref{Lx gamma} and Lemma \ref{lemma A2}.

	For $J_{31}$,  using $\px^\alpha h=\rho_\alpha\sqrt{M_u}+b_\alpha\cdot(v-u)\sqrt{M_u}+(\mathbf{I-P})\px^\alpha h $, we obtain
	\begin{equation*}
	\begin{aligned}
	J_{31}=&	-\varepsilon^2\frac{1}{2}\inxv{\partial^\alpha_x(h(v-u)\cdot \pt u),\partial^\alpha_xh}+\varepsilon^2\inx{\px^\alpha b, \px^\alpha \pt u}\\[5pt]
	=&-\frac{1}{2} \varepsilon^2\inxv{h(v-u)\cdot \partial^\alpha_x\pt u,\partial^\alpha_xh} - \frac{1}{2}C^\beta_\alpha\varepsilon^2\inxv{\partial^\beta_x(h(v-u))\cdot \partial^{\alpha-\beta}_x\pt u,\partial^\alpha_xh} +\varepsilon^2\inx{\px^\alpha b, \px^\alpha \pt u}\\[5pt]
	=&\left( -\frac{1}{2} \varepsilon^2\inxv{h(v-u)\cdot \partial^\alpha_x\pt u,\rho_\alpha\sqrt{M_u}+b_\alpha\cdot(v-u)\sqrt{M_u}+(\mathbf{I-P})\partial^\alpha_xh} +\varepsilon^2\inx{\px^\alpha b, \px^\alpha \pt u}\right) \\[5pt]
	&-\frac{1}{2}C^\beta_\alpha \varepsilon^2\inxv{\partial^\beta_x(h(v-u))\cdot \partial^{\alpha-\beta}_x\pt u,\rho_\alpha\sqrt{M_u}+b_\alpha\cdot(v-u)\sqrt{M_u}+(\mathbf{I-P})\partial^\alpha_xh} \\[5pt]
	=&J_{311}+J_{312}.
	\end{aligned}
	\end{equation*}
	For $J_{311}$, using \eqref{assumption} and \eqref{eq:relation-g-h}, we have
	\begin{equation}\label{eq:J311-1}
	\begin{aligned}
	J_{311}=&-\frac{1}{2} \varepsilon^2\inxv{h(v-u)\cdot \partial^\alpha_x\pt u,\rho_\alpha\sqrt{M_u}+b_\alpha\cdot(v-u)\sqrt{M_u}+(\mathbf{I-P})\partial^\alpha_xh} \\[5pt]
    &+\varepsilon^2\inx{\px^\alpha b, \px^\alpha \pt u} \\[5pt]
	=&-\frac{1}{2} \varepsilon^2\inxv{b\cdot(v-u)\sqrt{M_u}(v-u)\cdot \partial^\alpha_x\pt u,\rho_\alpha\sqrt{M_u}}+\varepsilon^2\inx{\px^\alpha b, \px^\alpha \pt u} \\[5pt]
	&-\frac{1}{2} \varepsilon^2\inxv{(\rho\sqrt{M_u}+(\mathbf{I-P})h)(v-u)\cdot \partial^\alpha_x\pt u,b_\alpha\cdot(v-u)\sqrt{M_u}} \\[5pt]
	&-\frac{1}{2} \varepsilon^2\inxv{(b\cdot(v-u)\sqrt{M_u}+(\mathbf{I-P})h)(v-u)\cdot \partial^\alpha_x\pt u,(\mathbf{I-P})\partial^\alpha_xh} \\[5pt]
	\leq&-\varepsilon^2\inx{\left(\frac{\rho}{2}-1\right)\px^\alpha \pt u,\px^\alpha b}+C\varepsilon^2\left|\inx{\px^\alpha \pt u,b_\alpha-\px^\alpha b} \right|+C_\delta(T) \E(t)\\[5pt]
	&+C (\Hxx{b}+\Hxxv{\mi g})(\varepsilon^3\Ltwox{\px^\alpha \pt u}^2+\varepsilon\Ltwox{b_\alpha}^2+\varepsilon\Lv{(\mathbf{I-P})\partial_{x}^\alpha h}^2)\\[5pt]
	&+\delta(\varepsilon^3\Ltwox{\px^\alpha \pt u}^2+\varepsilon\Ltwox{b_\alpha}^2)\\[5pt]
	\leq&-\varepsilon^2\inx{\left(\frac{\rho}{2}-1\right)\px^\alpha \pt u,\px^\alpha b} + C_\delta (T)\varepsilon+C_\delta (T)\E(t) \\[5pt]
	&+C(\delta+\eta)(\varepsilon^3\Ltwox{\px^\alpha \pt u}^2+\varepsilon\Ltwox{b_\alpha}^2+\varepsilon\Lv{(\mathbf{I-P})\partial_{x}^\alpha h}^2),
	\end{aligned}
	\end{equation}		
	where we have used (see Lemma \ref{lem:b-b_a})
	\begin{equation*}
	\varepsilon\Ltwox{b_\alpha-\px^\alpha b}^2\leq C(T)\varepsilon+C(T)\E(t).
	\end{equation*}		
	For the term of $ -\varepsilon^2\inx{\left(\frac{\rho}{2}-1\right)\px^\alpha \pt u,\px^\alpha b}$ in \eqref{eq:J311-1}, since neither $\rho$ nor $\rho-2 $ is assumed small, we need to derive some more refined estimates. We substitute $b$ therein using $\eqref{h u}_2$, yielding
	\begin{equation}
	\begin{aligned}
	&-\varepsilon^2\inx{\left(\frac{\rho}{2}-1\right)\px^\alpha \pt u,\px^\alpha b}\\
	=&-\varepsilon^3\inx{\left(\frac{\rho}{2}-1\right)\px^\alpha \pt u,\px^\alpha(\pt u+u\cdot\nabla_xu-\Delta_xu+\nabla_xP) }\\
	=&-\varepsilon^3\intx{\left(\frac{\rho}{2}-1\right)|\px^\alpha \pt u|^2}-\frac{\varepsilon^3}{2}\dt{t}\intx{\left(\frac{\rho}{2}-1\right)|\px^\alpha\nabla_xu|^2}\\
	&+\underbrace{\frac{\varepsilon^3}{4}\intx{\pt\rho |\nabla_x\px^\a u|^2}}_{K_{11}}-\underbrace{\frac{\varepsilon^3}{2}\intx{\px^\a\pt u\cdot \px^\a\nabla_xu\cdot\nabla_x\rho}}_{K_{12}}\\
	&-\underbrace{\varepsilon^3\inx{\left(\frac{\rho}{2}-1\right)\px^\alpha \pt u,\px^\alpha (u\cdot\nabla_xu) }}_{K_{13}}-\underbrace{\varepsilon^3\inx{\left(\frac{\rho}{2}-1\right)\px^\alpha \pt u,\px^\alpha\nabla_xP }}_{K_{14}}.
	\end{aligned}
	\end{equation}
	For $K_{11} $, noting that $\rho=\phi+\rho_0+1 $, we have
	\begin{equation}
	\begin{aligned}
	|K_{11}|=\left|\frac{\varepsilon^3}{4}\intx{\pt\rho |\nabla_x\px^\a u|^2}\right|\leq C(T)\E(t).
	\end{aligned}
	\end{equation}
	For $ K_{12}$, similar to $ K_{11}$,
	\begin{equation}
	|K_{12}|=\left| \frac{\varepsilon^3}{2}\intx{\px^\a\pt u\cdot \px^\a\nabla_xu\cdot\nabla_x\rho}  \right|\leq\delta\varepsilon^3\Lx{\px^\a \pt u}^2+C_\delta (T)\E(t).
	\end{equation}
	For $K_{13} $,
	\begin{equation}
	\begin{aligned}
	|K_{13}|=&\left|\varepsilon^3\inx{\left(\frac{\rho}{2}-1\right)\px^\alpha \pt u,\px^\alpha (u\cdot\nabla_xu) }\right|\\
	\leq&\delta\varepsilon^3\Lx{\px^\a \pt u}^2+C_\delta\varepsilon^2\Hxxx{u}\Hxxx{\n^2 u}^2\\
	\leq&\delta\varepsilon^3\Lx{\px^\a \pt u}^2+C_\delta (T)\E(t).
	\end{aligned}
	\end{equation}
	For  $K_{14}$, integrating by parts yields
	\begin{equation}\label{eq:K14}
	\begin{aligned}
	|K_{14}|=&\left|\varepsilon^3\inx{\left(\frac{\rho}{2}-1\right)\px^\alpha \pt u,\px^\alpha\nabla_xP }\right|\\[5pt]
	=&\frac{1}{2}\left|\varepsilon^3\inx{\nabla_x\rho\cdot\px^\alpha \pt u,\px^\alpha P }\right|\\[5pt]
	\leq&C\varepsilon^3\Linf{\nabla_x\rho}\Lx{\px^\alpha \pt u}\Hxxx{\n_xP}\\[5pt]
	\leq&C\varepsilon^3\Linf{\nabla_x\rho}\Lx{\px^\alpha \pt u}\left(\Hxxx{u\cdot\nabla_xu} +\frac{\Hxxx{b}}{\varepsilon}\right) \\[5pt]
	\leq&\delta\varepsilon^3\Lx{\px^\a \pt u}^2+C_\delta (T)\E(t),
	\end{aligned}
	\end{equation}
	where we apply standard elliptic estimates for $P$ in $\eqref{h u}_2$, namely
	\begin{equation*}
	\Delta_xP=-\div_x(u\cdot\nabla_xu)+\frac{\div_xb}{\varepsilon},
	\end{equation*}
	 and utilize the incompressibility condition $\div_x u=0$.

	Combining the estimates \eqref{eq:J311-1}-\eqref{eq:K14}, we find that
	\begin{equation}\label{eq:J311}
	\begin{aligned}
	J_{311}\leq&-\varepsilon^3\intx{\left(\frac{\rho}{2}-1\right)|\px^\alpha \pt u|^2}-\frac{\varepsilon^3}{2}\dt{t}\intx{\left(\frac{\rho}{2}-1\right)|\px^\alpha\nabla_xu|^2}+C_\delta (T)\varepsilon\\[5pt]
	&+C_\delta (T)\E(t)+C(\delta+\eta)\Big(\varepsilon^3\Ltwox{\px^\alpha \pt u}^2+\varepsilon\Ltwox{b_\alpha}^2+\varepsilon\Lv{(\mathbf{I-P})\partial_{x}^\alpha h}^2\Big).
	\end{aligned}
	\end{equation}

Now, we estimate $J_{312}$, when $\beta<\alpha,\ |\b|\neq 0$, we have
	\begin{equation}\label{eq:J312-1}
	\begin{aligned}
	&\Big|-\frac{1}{2}C^\beta_\alpha \varepsilon^2\inxv{\partial^\beta_x(h(v-u))\cdot \partial^{\alpha-\beta}_x\pt u,\rho_\alpha\sqrt{M_u}+b_\alpha\cdot(v-u)\sqrt{M_u}+(\mathbf{I-P})\partial^\alpha_xh}\Big|\\[5pt]
	\leq&C_\delta (T)\E(t)+C(\delta+\eta)\Big(\varepsilon\Ltwox{b_\alpha}^2+\varepsilon\Lv{(\mathbf{I-P})\partial_{x}^\alpha h}^2\Big) + \delta\varepsilon\Hthreex{\pt \psi}^2+C_\delta (T)\varepsilon.
	\end{aligned}
	\end{equation}		
	When $\beta=\alpha $, we have
	\begin{equation}\label{eq:J312-2}
	\begin{aligned}
   &\Big|-\frac{1}{2} \varepsilon^2\inxv{\partial^\alpha_xh(v-u)\cdot \pt u,\rho_\alpha\sqrt{M_u}+b_\alpha\cdot(v-u)\sqrt{M_u}+(\mathbf{I-P})\partial^\alpha_xh}\Big|\\[5pt]
	%&-\frac{1}{2} \varepsilon^2C^\beta_\alpha\inxv{\partial^{\alpha-\beta}_xh\px^\beta(v-u)\cdot \pt u,\rho_\alpha\sqrt{M_u}+b_\alpha\cdot(v-u)\sqrt{M_u}+(\mathbf{I-P})\partial^\alpha_xh}\\
	=&-\frac{1}{2}\varepsilon^2\inxv{\rho_\alpha \sqrt{M_u}(v-u)\cdot \pt u,b_\alpha\cdot(v-u)\sqrt{M_u}}\\[5pt]
	&-\frac{1}{2}\varepsilon^2\inxv{b_\alpha \cdot(v-u)\sqrt{M_u}(v-u)\cdot \pt u,\rho_\alpha\sqrt{M_u}+(\mathbf{I-P})\px^\alpha h}\\[5pt]
	&-\frac{1}{2}\varepsilon^2\inxv{(\mathbf{I-P})\px^\alpha h(v-u)\cdot \pt u,b_\alpha\cdot(v-u)\sqrt{M_u}+(\mathbf{I-P})\px^\alpha h}\\[5pt]
    \leq&C(\delta+\eta)\Big(\varepsilon\Ltwox{b_\alpha}^2+\varepsilon\Lv{(\mathbf{I-P})\partial_{x}^\alpha h}^2\Big) + \delta\varepsilon\Hthreex{\pt \psi}^2+C_\delta (T)\varepsilon.
	\end{aligned}
	\end{equation}		
	Combining \eqref{eq:J312-1} and \eqref{eq:J312-2}, we obtain
	\begin{equation}\label{eq:J312}
	|J_{312}|\leq C_\delta (T)\E(t)+C(\delta+\eta)\Big(\varepsilon\Ltwox{b_\alpha}^2+\varepsilon\Lv{(\mathbf{I-P})\partial_{x}^\alpha h}^2\Big) + \delta\varepsilon\Hthreex{\pt \psi}^2+C_\delta (T)\varepsilon.
	\end{equation}
	Hence, by further summing \eqref{eq:J311} and \eqref{eq:J312}, we have
	\begin{equation}\label{eq:J31}
	\begin{aligned}
	J_{31}\leq&-\varepsilon^3\intx{\left(\frac{\rho}{2}-1\right)|\px^\alpha \pt u|^2}-\frac{\varepsilon^3}{2}\dt{t}\intx{\left(\frac{\rho}{2}-1\right)|\px^\alpha\nabla_xu|^2}+C_\delta (T)\varepsilon\\[5pt]
	&+C_\delta (T)\E(t)+C(\delta+\eta)\Big(\varepsilon^3\Ltwox{\px^\alpha \pt u}^2 + \varepsilon\Ltwox{b_\alpha}^2+\varepsilon\Lv{(\mathbf{I-P})\partial_{x}^\alpha h}^2\Big)\\[5pt]
    &+\delta\varepsilon\Hthreex{\pt \psi}^2.
	\end{aligned}
	\end{equation}

	For $J_{32} $, we have	
	\begin{equation}\label{eq:J32}
	\begin{aligned}
	|J_{32}|=&\Big|-\varepsilon^2\frac{1}{2}\inxv{\px^\alpha[h (v\cdot\nabla_x u)\cdot(v-u)],\px^\alpha h}\Big|\\[5pt]
	\leq&C(T)\E(t)+C\left|\varepsilon^2\inxv{h (v\cdot\px^\alpha\nabla_x u)\cdot(v-u),\px^\alpha h}\right|\\[5pt]
	\leq&\left|\varepsilon^2\inxv{h ((v-u)\cdot\px^\alpha\nabla_x u)\cdot(v-u),\px^\alpha h}\right|\\[5pt]
	&+\left|\varepsilon^2\inxv{h (u\cdot\px^\alpha\nabla_x u)\cdot(v-u),\px^\alpha h}\right|+C(T)\E(t)\\[5pt]
	\leq&\delta\varepsilon \Big(\Ltwox{\nabla_x^5\psi}^2+\Ltwox{b_\alpha}^2+\Lv{(\mathbf{I-P})\px^\alpha h}^2\Big) + C_\delta (T)\E(t)\\[5pt]
	&+C_\delta (T)\varepsilon+C_\delta(T) \sum\limits_{|\theta|\leq3}\Lv{\px^\theta\mi h}^2.
	\end{aligned}
	\end{equation}	

	For $J_{33}$, we find
	\begin{equation}\label{eq:J33}
	\begin{aligned}
	|J_{33}|=&\big|-\varepsilon^3\inx{\partial^\alpha_x(u\cdot \nabla_xu),\partial^\alpha_x\pt u}\big| \\[5pt]
	\leq&\delta\varepsilon^3\Ltwox{\px^\alpha \pt u}^2+C\varepsilon\Htwox{\pt \psi}^2+C_\delta (T)\E(t)+C_\delta (T)\varepsilon.
	\end{aligned}
	\end{equation}	

	For $J_{34}$, when $|\beta|=1$, integrating by parts yields
    \begin{equation}\label{eq:J34-1-1}
	\begin{aligned}
	&\Big|-\varepsilon C^\beta_\alpha\inxv{\partial^\beta_x\left(\frac{|v-u|^2}{4}\right)\partial^{\alpha-\beta}_xh,\partial^\alpha_xh}\Big|\\
    	=&\Big|\frac{\v}{2} C^\beta_\alpha\inxv{\partial^\beta_x\left(\frac{|v-u|^2}{4}\right),\partial^\b_x(\partial^{\alpha-\beta}_xh)^2}\Big|\\
    =&\Big|\frac{\varepsilon}{2} C^\beta_\alpha\inxv{\px^\b \partial^\beta_x\left(\frac{|v-u|^2}{4}\right)\partial^{\alpha-\beta}_xh,\partial^{\a-\b}_xh}\Big|\\
    \leq&C\v \left|  \inxv{\px^\b u\cdot \px^\b u\,\partial^{\alpha-\beta}_xh,\partial^{\a-\b}_xh}\right|+C\v \left|  \inxv{(v-u)\cdot\px^\b  \px^\b u\,\partial^{\alpha-\beta}_xh,\partial^{\a-\b}_xh}\right|\\
    \leq&C(T)\E(t)+C(T)\v+\underbrace{C\v \left|  \inxv{(v-u)\cdot\px^\b  \px^\b u\,\partial^{\alpha-\beta}_xh,\partial^{\a-\b}_xh}\right|}_{J_{341}},
	\end{aligned}
	\end{equation}
    where we use Lemma \ref{lem:hx} in the last inequality above. Note that, for $J_{341} $, using the Macro-Micro decomposition, we have
   \begin{equation}\label{eq:J34-1-2}
    \begin{aligned}
    J_{341}=&C\v \left|  \inxv{(v-u)\cdot\px^\b  \px^\b u\,\partial^{\alpha-\beta}_xh,\partial^{\a-\b}_xh}\right|\\
    \leq&C\v \left|  \inxv{(v-u)\cdot\px^\b  \px^\b u,\partial^{\a-\b}_x\left((a+1)\sqrt{M_u}+b\cdot(v-u)\sqrt{M_u}+\mi g\right)^2}\right|\\
    \leq&\delta\varepsilon(\Ltwox{b_\alpha}^2+\Lv{(\mathbf{I-P})\px^\alpha h}^2) +C_\delta (T)\E(t)+C_\delta (T)\varepsilon+C_\delta (T)\sum\limits_{|\theta|\leq 3}\Lv{\px^\theta\mi h}^2,
    \end{aligned}
   \end{equation}
    where we use Lemma \ref{lem:b-b_a} in the last inequality above.

    Therefore, for $J_{34} $, when $2\leq|\beta|\leq 3 $, similar to $J_{341} $ , integrating by parts, then for $|\a'|=1$,  we have
  \begin{equation}\label{eq:J34-2}
	\begin{aligned}
	&\Big|-\varepsilon C^\beta_\alpha\inxv{\partial^\beta_x\left(\frac{|v-u|^2}{4}\right)\partial^{\alpha-\beta}_xh,\partial^\alpha_xh}\Big|\\[5pt]
    =&\Big|\varepsilon C^\beta_\alpha\inxv{\px^{\a'}\left( \partial^\beta_x\left(\frac{|v-u|^2}{4}\right)\partial^{\alpha-\beta}_xh\right),\partial^{\a-\a'}_xh}\Big|\\[5pt]
     \leq&\delta\varepsilon(\Ltwox{b_\alpha}^2+\Lv{(\mathbf{I-P})\px^\alpha h}^2) +C_\delta (T)\E(t)+C_\delta (T)\varepsilon+C_\delta (T)\sum\limits_{|\theta|\leq 3}\Lv{\px^\theta\mi h}^2.
	\end{aligned}
	\end{equation}		
 When $|\b|=|\a|=4 $, we have
	 \begin{equation}\label{eq:J34-3}
	\begin{aligned}
     &\Big|-\varepsilon \inxv{\partial^\a_x\left(\frac{|v-u|^2}{4}\right)\,h,\partial^\alpha_xh}\Big|\\[5pt]
    =&\Big|\varepsilon \inxv{\px^{\a'}\left( \partial^\a_x\left(\frac{|v-u|^2}{4}\right)h\right),\partial^{\a-\a'}_xh}\Big|\\[5pt]
     \leq&\delta\varepsilon(\Ltwox{b_\alpha}^2+\Lv{(\mathbf{I-P})\px^\alpha h}^2) +C_\delta (T)\E(t)+C_\delta (T)\varepsilon\\[5pt]
     &+C_\delta (T)\sum\limits_{|\theta|\leq 3}\Lv{\px^\theta\mi h}^2+\delta\varepsilon\Lx{\nabla^5_x\psi}^2.
	\end{aligned}
	\end{equation}	
Combining the estimates \eqref{eq:J34-1-1}-\eqref{eq:J34-3}, we get
    \begin{multline}\label{eq:J34}
      J_{34} \leq \delta \varepsilon \Big(\Ltwox{b_\alpha}^2+\Lv{(\mathbf{I-P})\px^\alpha h}^2\Big) +C_\delta (T)\E(t)+C_\delta (T)\varepsilon\\
      +C_\delta (T)\sum\limits_{|\theta|\leq 3}\Lv{\px^\theta\mi h}^2+\delta\varepsilon\Lx{\nabla^5_x\psi}^2.
    \end{multline}

	Thus, combining the estimates \eqref{eq:J31}, \eqref{eq:J32}, \eqref{eq:J33} and \eqref{eq:J34} yields
	\begin{equation*}
	\begin{aligned}
	&\frac{1}{2}\dt{t}\left(\varepsilon^2\Ltwoxv{\px^\alpha h}^2+\frac{\varepsilon^3}{2}\Ltwox{\sqrt{\rho}\px^\alpha\nabla_xu}^2\right)\\[5pt]
	&+\frac{\varepsilon^3}{2}\Ltwox{\sqrt{\rho}\px^\alpha \pt u}^2+\varepsilon\Ltwox{b_\alpha}^2+\varepsilon c\Lv{(\mathbf{I-P})\partial_{x}^\alpha h}^2\\[5pt]
	\leq&C_\delta (T)\E(t)+\delta\varepsilon\Hthreex{\pt \psi}^2+C_\delta (T)\varepsilon + C\delta\varepsilon \left( \Ltwox{b_\alpha}^2+\Lv{(\mathbf{I-P})\px^\alpha h}^2 \right)\\[5pt]
	&+C_\delta (T)\sum\limits_{|\theta|\leq3}\Lv{\px^\theta\mi h}^2+\delta\varepsilon^3\Ltwox{\px^\alpha \pt u}^2+\delta\varepsilon\Ltwox{\nabla_x^5\psi}^2.
	\end{aligned}
	\end{equation*}		
By further choosing $\delta$ and $\eta$ small enough to satisfy
\begin{equation}\label{eq:def eta}
C(\delta+\eta)\leq \frac{c}{4},
\end{equation}
and integrating over the time interval $(0,t)$, we can finally obtain \eqref{eq:hH4}.
\end{proof}
	
In what follows, we will combine the macroscopic error estimates in Corollary \ref{coro:ma total} with those for $g$ (or $h$) in Lemma \ref{lem:hH4}, and thereby prove the Proposition \ref{lem:ma g}.
\begin{proof}[{\bf Proof of Proposition \ref{lem:ma g}}]
Note that, for $1\leq|\alpha|\leq4 $, in the expressions below, $\beta$ and $\theta$ denote that the summation is taken over all multi-indices satisfying $1\leq|\beta|+|\theta|\leq |\a|$ and $\beta,\ \theta \leq \alpha $. Then, we have
		\begin{equation*}
		\begin{aligned}
		\partial_{x}^\alpha((\mathbf{I-P})h)=&\partial_{x}^\alpha h-\partial_{x}^\alpha\left(\intv{h\sqrt{M_u}}\sqrt{M_u}\right)-\partial_{x}^\alpha\left(\intv{(v-u)h\sqrt{M_u}}\cdot(v-u)\sqrt{M_u}\right)\\
		=&(\mathbf{I-P})\partial_{x}^\alpha h-\left(\intv{\partial_{x}^{\alpha-\beta-\theta} h\,\partial_{x}^{\beta}\sqrt{M_u}}\,\partial_{x}^{\theta}\sqrt{M_u}\right)\\
		&-\left(\intv{\partial_{x}^{\alpha-\beta-\theta}h\, \partial_{x}^{\beta}[(v-u)\sqrt{M_u}]}\cdot\partial_{x}^{\theta}[(v-u)\sqrt{M_u}]\right),
		\end{aligned}
		\end{equation*}
		which yields that	
		\begin{equation}\label{eq:mi H4}
		\begin{aligned}
		\varepsilon\Lv{\partial_{x}^\alpha(\mathbf{I-P})h}^2\leq&\varepsilon\Lv{(\mathbf{I-P})\partial_{x}^\alpha h}^2+C(T)\mathbb{E}(t)+C(T)\varepsilon\HxxxLv{(\mathbf{I-P})h}^2+C(T)\varepsilon\\[5pt]
		\leq&\varepsilon\Lv{(\mathbf{I-P})\partial_{x}^\alpha h}^2+C(T)\mathbb{E}(t)+C(T)\varepsilon.
		\end{aligned}
		\end{equation}
Using $\mi g=\mi h $ in \eqref{eq:relation-g-h}, we have, for all $|\a|\leq 4 $,
		\begin{equation*}
		\varepsilon\Lv{\partial_{x}^\alpha(\mathbf{I-P})h}^2=\varepsilon\Lv{\partial_{x}^\alpha(\mathbf{I-P})g}^2.
		\end{equation*}
		Finally, by substituting \eqref{eq:mi H4} into \eqref{eq:ma total}, combining with Lemma \ref{lem:hH4}, and choosing $\delta$ and $\varepsilon$ sufficiently small, we can obtain \eqref{eq:ma g}.		
	\end{proof}

%----------------------------
\subsection{Microscopic estimates}
\label{subsec:micro-estimate}

In this section, we derive the energy estimate for the microscopic part $(\mathbf{I-P})g$.
To this end, we first derive the microscopic system of $(\mathbf{I-P})g$ by substituting the decomposition $\eqref{decom}$ into $\eqref{g}_1$:
	\begin{equation}
	\begin{aligned}
	\label{i-p}
	&\pt((\mathbf{I-P})g)+v\cdot \nabla_x((\mathbf{I-P})g)+\frac{\mathbf{L}((\mathbf{I-P})g)}{\varepsilon}=-R_{mi},
	\end{aligned}
	\end{equation}
	where
	\begin{equation*}
	\begin{aligned}
	R_{mi}=&-b\cdot \pt u\sqrt{M_u}+b\cdot(v-u)\pt u\cdot(v-u)\sqrt{M_u}+(\rho(v-u)\cdot \nabla_x u)\cdot(v-u)\sqrt{M_u}\\[5pt]
	&+((v-u)\cdot\nabla_xb)\cdot(v-u)\sqrt{M_u}-(v\cdot\nabla_xu)\cdot b\sqrt{M_u}-\mathrm{div}_xb\sqrt{M_u}\\[5pt]
	&+b\cdot(v-u)(v\cdot \nabla_xu)\cdot(v-u)\sqrt{M_u}+\frac{1}{2}(\mathbf{I-P})g(v-u)\cdot \partial_t u\\[5pt]
	&+\frac{1}{2}(\mathbf{I-P})g(v\cdot \nabla_xu)\cdot(v-u)-(b\cdot \nabla_xu)\cdot(v-u)\sqrt{M_u}\\[5pt]
	&-\intv{(v-u)v\cdot \nabla_x((\mathbf{I-P})g\sqrt{M_u})}\cdot (v-u)\sqrt{M_u}.
	\end{aligned}
	\end{equation*}

	\begin{lemma}\label{lem:mi H3x}
			Under the conditions of Theorem \ref{thm1} and the \textit{a priori} assumption \eqref{assumption}, for $t\in [0,T_1] $, we have
   		\begin{equation}\label{eq:mi H3x}
   \begin{aligned}
   &\|(\mathbf{I-P})g\|_{H^2_x L^2_v}^2+\v\|(\mathbf{I-P})g\|_{H^3_x L^2_v}^2\\[5pt]
   & +\int_0^t\Big(\sum\limits_{|\a|\leq 2}\frac{\|\p_x^\a(\mathbf{I-P})g\|_\nu^2}{\v}+\sum\limits_{|\theta|=3 }\|\p_x^\theta(\mathbf{I-P})g\|_\nu^2\Big)\mathrm{d}s\\
   \leq&C(T)\int_0^t \mathbb{E}(s)\mathrm{d}s+C(T)\mathbb{E}(0)+C(T)\varepsilon\\
&+C(T)\int_0^t\Big[\v^2\|\p_t\psi\|_{H^3_x}^2+\v^2\sum\limits_{|\lambda|=4}(\|b_\lambda\|_{L^2_x}^2+\|(\mathbf{I-P})\p_x^\lambda h\|_\nu^2)\Big]\mathrm{d}s,
   \end{aligned}
   \end{equation}	
     where the energy functional $\E(t)$ is given in \eqref{defE} and $C(T) > 1$ is a constant only depending on $T$.
	\end{lemma}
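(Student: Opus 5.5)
We will perform a direct energy estimate on the microscopic equation \eqref{i-p}. The weighting is $1$ for $|\alpha|\le2$ and $\varepsilon$ for $|\theta|=3$.

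\textbf{Low derivatives.} For $|\alpha|\le 2$, apply $\partial_x^\alpha$ to \eqref{i-p} and take the $L^2_{x,v}$ inner product with $\partial_x^\alpha(\mathbf{I-P})g$. The transport term $v\cdot\nabla_x$ vanishes after integration by parts. Since $\mathbf{L}$ depends on $u$, we commute $\partial_x^\alpha$ with $\mathbf{L}$ via \eqref{Lx gamma}. Because $\mathbf{P}$ also depends on $u$, $\partial_x^\alpha(\mathbf{I-P})g$ is not purely microscopic. We therefore write
\[
\partial_x^\alpha(\mathbf{I-P})g=(\mathbf{I-P})\partial_x^\alpha(\mathbf{I-P})g+\mathbf{P}\partial_x^\alpha(\mathbf{I-P})g,
\]
where the macroscopic piece is a sum of lower-order derivatives of $(\mathbf{I-P})g$ times derivatives of $u$, as in \eqref{eq:mi H4}. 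Coercivity of $\mathbf{L}$ on microscopic functions (Lemma \ref{lemma A2}) then gives the dissipation $\frac{c}{\varepsilon}\|\partial_x^\alpha(\mathbf{I-P})g\|_\nu^2$, up to commutator errors. These errors are of the form $\varepsilon^{-1}\|\nabla_x u\|\,\|(\mathbf{I-P})g\|_{H^{|\alpha|-1}}\|\cdot\|_\nu$. They are absorbed by Cauchy--Schwarz, an induction on $|\alpha|$, and the smallness \eqref{assumption}, with $\nabla_x u=\nabla_x\psi+\nabla_x u_0$ where $u_0$ is bounded by \eqref{eq:regu limit}.

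The source $R_{mi}$ is estimated with $\varepsilon\|R_{mi}\|^2$-type bounds, using the $1/\varepsilon$ dissipation.
\begin{itemize}
\item The terms linear in $b$, $\nabla_x b$, $b\,\partial_t u$ are controlled by $\varepsilon\|b\|_{H^3}^2\le \varepsilon^2\,\mathbb{E}$ and by $\varepsilon\|\partial_t b\|^2$, giving $C(T)\mathbb{E}$.
\item The term $(\rho(v-u)\cdot\nabla_x u)\cdot(v-u)\sqrt{M_u}$ contains the large background $\nabla_x u_0$. Since $\rho\nabla_xu_0$ is $O(1)$, its contribution is $\varepsilon\|R\|^2\le C(T)\varepsilon$. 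This is the source of the $C(T)\varepsilon$ on the right-hand side.
\item The last term, $\int (v-u)v\cdot\nabla_x((\mathbf{I-P})g\sqrt{M_u})\,dv$, loses one $x$-derivative. At $|\alpha|=2$ it involves $\nabla_x^3(\mathbf{I-P})g$, which is why the next level is needed.
\end{itemize}

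\textbf{Top level.} For $|\theta|=3$, multiply by $\varepsilon\,\partial_x^\theta(\mathbf{I-P})g$. This produces the energy $\varepsilon\|(\mathbf{I-P})g\|_{H^3}^2$ and the dissipation $\|\partial_x^\theta(\mathbf{I-P})g\|_\nu^2$. The derivative-losing term now involves four derivatives of $(\mathbf{I-P})g$ with an $\varepsilon$ prefactor. We bound it by
\[
\delta\|\partial_x^\theta(\mathbf{I-P})g\|_\nu^2+C\varepsilon^2\|\nabla_x^4(\mathbf{I-P})g\|^2 .
\]
Via the analogue of \eqref{eq:mi H4} with $\varepsilon^2$ weight, the second piece is converted into $\varepsilon^2\sum_{|\lambda|=4}(\|b_\lambda\|^2+\|(\mathbf{I-P})\partial_x^\lambda h\|_\nu^2)$, using that $\partial^\lambda_x(\mathbf{I-P})h$ has macroscopic part governed by $b_\lambda$ and lower-order terms. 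The terms in $R_{mi}$ containing $\partial_t u=\partial_t\psi+\partial_t u_0$ at three derivatives produce $\varepsilon^2\|\partial_t\psi\|_{H^3}^2$. Finally, summing over $|\alpha|\le 2$ and $|\theta|=3$, integrating in time, and bounding the initial data by $\mathbb{E}(0)$ gives \eqref{eq:mi H3x}.

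\textbf{Main obstacle.} We expect the hardest part to be the $\varepsilon^{-1}$ commutators $\varepsilon^{-1}\partial_x^\beta(|v-u|^2)\partial_x^{\alpha-\beta}(\mathbf{I-P})g$ and $\varepsilon^{-1}\mathbf{L}\mathbf{P}\partial_x^\alpha(\mathbf{I-P})g$, since $\nabla_x u$ need not be small. Our plan is to avoid relying on smallness here. For the first commutator, we integrate by parts in $x$ as in \eqref{eq:J34-1-1}. For the second, which is the macroscopic part of $\partial_x^\alpha(\mathbf{I-P})g$, we exploit that it is exactly orthogonal to the microscopic component, so it enters only through lower-order derivatives of $(\mathbf{I-P})g$ already controlled at the previous induction level with $1/\varepsilon$ dissipation. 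If a genuinely large remainder persists, we would weight each level by a large constant $K^{-|\alpha|}$ and close the estimate via Gronwall, with the factor $C(T)$ allowing this.
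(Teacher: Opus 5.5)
Your proposal is correct and follows essentially the paper's route. The shared steps are: $\partial_x^\alpha$-energy estimates on \eqref{i-p} with weight $1$ for $|\alpha|\le 2$ and weight $\varepsilon$ for $|\theta|=3$; coercivity from Lemma \ref{lemma A2}, converted via Lemma \ref{lem:property mi}; the levels combined with large constants (your $K^{-|\alpha|}$ weighting, which is what actually absorbs the non-small $\nabla_x u_0$ commutators, not the smallness in \eqref{assumption}); and the top-level loss $C\varepsilon^2\|\nabla_x^4(\mathbf{I-P})g\|^2$, together with $\varepsilon^2\|\nabla_x^4 b\|^2$ from the $\nabla_x b$ terms of $R_{mi}$, traded for the $b_\lambda$ and $(\mathbf{I-P})\partial_x^\lambda h$ terms via \eqref{eq:mi H4} and Lemma \ref{lem:b-b_a}. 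Only cosmetic slips remain: $\varepsilon\|b\|_{H^3_x}^2\le \varepsilon\,\mathbb{E}$ (not $\varepsilon^2\mathbb{E}$), no $\partial_t b$ enters $R_{mi}$, and no Gronwall argument is needed to absorb the lower-level dissipation.
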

	
	\begin{proof}
		Applying $\p^\a_x$ with $0\leq|\a|\leq 2 $ to $\eqref{i-p}$, multiplying by $\p^\a_x(\mathbf{I-P})g$, and then integrating over $x,\ v \in \mathbb{R}^3$, we obtain
		\begin{equation}\label{i-pgh2}
		\begin{aligned}
		\frac{1}{2}\dt{t}\|\px^\a(\mathbf{I-P})g\|_{L^2_x L^2_v}^2+\frac{c \|(\mathbf{I-P})\p_x^\a (\mathbf{I-P})g\|_\nu^2}{\v}\le \sum\limits_{i=1}^{10}B_i,			
		\end{aligned}
		\end{equation}
		where, we have used Lemma \ref{lemma A2} and
			\begin{equation*}
		\begin{aligned}
		\p^\a_x\mathbf{L}(\mathbf{I-P})g=&\mathbf{L}(\p^\a_x(\mathbf{I-P})g)+\sum\limits_{\lambda\leq\a,|\lambda|\neq 0}C^{\lambda}_{\a} \left[\p^\lambda_x \left(\frac{|v-u|^2}{4}\right) \p^{\a-\lambda}_x(\mathbf{I-P})g \right].
		\end{aligned}
		\end{equation*}
		
		 Throughout the rest of the proof, the summation notation $\sum\limits_{1 \leq |\b| \leq |\a|}$ will be omitted for brevity. The source terms $ B_i$ are defined as
		\begin{equation*}
		\begin{aligned}
		B_1=&\frac{1}{\v}|C^{\b}_{\a}\inxv{\p^\b_x \left(\frac{|v-u|^2}{4} \right)\p^{\a-\b}_x(\mathbf{I-P})g,\p^\a_x(\mathbf{I-P})g}|,\\[5pt]
		B_2=&|\inxv{\p_{x}^\a(b\cdot \p_t u\M),\p^\a_x(\mathbf{I-P})g}|\\[3pt]
		&+\Big|\inxv{\p_x^\a\Big\{\big[b\cdot(v-u)\big]\big[\p_t u\cdot(v-u)\M\big]\Big\},\p^\a_x(\mathbf{I-P})g}\Big|,\\[5pt]
		B_3=&\Big|\inxv{\p_x^\a\Big\{\big[a(v-u)\cdot \n_x u\big]\cdot(v-u)\M\Big\},\p^\a_x(\mathbf{I-P})g}\Big|,\\[5pt]
		B_4=&\Big|\inxv{\p_x^\a\Big\{\big[(v-u)\cdot\n_xb\big]\cdot(v-u)\M\Big\},\p^\a_x(\mathbf{I-P})g}\Big|,\\[5pt]
		B_5=&\big|\inxv{\p_x^\a\big[(v\cdot\n_xu)\cdot b\M\big],\p^\a_x(\mathbf{I-P})g}\big|\\[3pt]
		&+\big|\inxv{\p_x^\a\big[b\cdot(v-u)(v\cdot \n_xu)\cdot(v-u)\M\big],\p^\a_x(\mathbf{I-P})g}\big|,\\[5pt]
		B_6=&\Big|\inxv{\p_x^\a\Big\{\big[(v-u)\cdot \n_x u\big]\cdot(v-u)\M\Big\},\p^\a_x(\mathbf{I-P})g}\Big|,\\[5pt]
		B_7=&\big|\inxv{\p_x^\a \left[\frac{1}{2}(\mathbf{I-P})g(v-u)\cdot \p_t u\right],\p^\a_x(\mathbf{I-P})g}\big|,\\[5pt]
		B_8=&\big|\inxv{\p_x^\a\left[\frac{1}{2}(\mathbf{I-P})g(v\cdot \n_xu)\cdot(v-u)\right],\p^\a_x(\mathbf{I-P})g}\big|,\\[5pt]
		B_9=&\big|\inxv{\p_x^\a\big[\mathrm{div}_xb\M+(b\cdot \n_xu)\cdot(v-u)\M\big],\p^\a_x(\mathbf{I-P})g}\big|,\\[5pt]
		B_{10}=&\big|\inxv{\p_x^\a \left[\l v\cdot \n_x\big[(\mathbf{I-P})g\M\big],v-u\r_v\cdot (v-u)\M\right],\p^\a_x(\mathbf{I-P})g}\big|.
		\end{aligned}
		\end{equation*}
		For $B_1$, when $|\alpha|=0$, this term does not appear, when $|\alpha|=1$, we get
		\begin{equation}\label{eq:B1}
		\begin{aligned}
		B_1=&\frac{1}{\v}|C^\b_\a\inxv{\p^\b_x \left(\frac{|v-u|^2}{4}\right)\p^{\a-\b}_x (\mathbf{I-P})g,\p^\a_x(\mathbf{I-P})g}|\\[3pt]
		\leq&C(T)\frac{1}{\v}\|\p_x^\a(\mathbf{I-P})g\|_\nu\|\p_x^{\a-\a'}(\mathbf{I-P})g\|_{L^2_xL^2_v}\\
		\leq&\delta\frac{\|\p_x^\a(\mathbf{I-P})g\|_\nu^2}{\v}+C_\delta(T)\frac{\|(\mathbf{I-P})g\|^2_{L^2_xL^2_v}}{\v}.
		\end{aligned}
		\end{equation}
		 When $|\alpha|=2$, we have
			\begin{equation}\label{eq:B1-1}
		\begin{aligned}
		B_1=&\frac{1}{\v}|C^\b_\a\inxv{\p^\b_x \left(\frac{|v-u|^2}{4}\right)\p^{\a-\b}_x (\mathbf{I-P})g,\p^\a_x(\mathbf{I-P})g}|\\[3pt]
		\leq&C(T)\frac{1}{\v}\|\p_x^\a(\mathbf{I-P})g\|_\nu(\|\p_x^{\a-\a'}(\mathbf{I-P})g\|_{L^2_xL^2_v}+\|\p_x^{\a-\a''}\Big( \mathbf{I-P})g\|_{L^2_xL^2_v} \Big)\\
		\leq&\delta\frac{\|\p_x^\a(\mathbf{I-P})g\|_\nu^2}{\v}+C_\delta (T)\frac{\|(\mathbf{I-P})g\|_{H^1_xL^2_v}^2}{\v}.
		\end{aligned}
		\end{equation}	
		For $B_2$, noting that $\p_t u=\p_t\psi+\p_tu_0$,
		\begin{equation}\label{eq:B2}
		\begin{aligned}
		B_2=&|\inxv{\p_x^\a(b\cdot \p_t u\M),\p^\a_x(\mathbf{I-P})g}|\\[3pt]
		& \qquad \qquad +|\inxv{\p_x^\a\big[b\cdot(v-u)\p_t u\cdot(v-u)\M\big],\p^\a_x(\mathbf{I-P})g}|\\[5pt]
		\leq&\delta\frac{\|\p_x^\a(\mathbf{I-P})g\|_\nu^2}{\v}+ C_\delta(T)\mathbb{E}(t).
		\end{aligned}
		\end{equation}
		For $B_3$, noting that $a=\phi+\rho_0 $,
		\begin{equation}\label{eq:B3}
		\begin{aligned}	
		B_3=&\Big|\inxv{\p_x^\a\Big\{\big[a(v-u)\cdot \n_x u\big]\cdot(v-u)\M\Big\},\p^\a_x(\mathbf{I-P})g}\Big|\\
		\leq&\delta\frac{\|\p_x^\a(\mathbf{I-P})g\|_\nu^2}{\v}+ C_\delta (T)\mathbb{E}(t)+ C_\delta (T)\v.
		\end{aligned}
		\end{equation}
		For $B_4$, we have:
		\begin{equation}\label{eq:B4}
		\begin{aligned}	
		B_4=&\Big|\inxv{\p_x^\a\Big\{\big[(v-u)\cdot\n_xb\big]\cdot(v-u)\M\Big\},\p^\a_x(\mathbf{I-P})g}\Big|\\
		\leq&\delta\frac{\|\p_x^\a(\mathbf{I-P})g\|_\nu^2}{\v}+ C_\delta(T)\mathbb{E}(t)+ C_\delta(T)\v.
		\end{aligned}
		\end{equation}
		For $B_5$, using $ u=\psi+u_0$, we obtain:
		\begin{equation}
		\begin{aligned}	
		B_5=&\big|\inxv{\p_x^\a\big[(v\cdot\n_xu)\cdot b\M\big],\p^\a_x(\mathbf{I-P})g}\big|\\[3pt]
		& \qquad \qquad +|\inxv{\p_x^\a\big[b\cdot(v-u)(v\cdot \n_xu)\cdot(v-u)\M\big],\p^\a_x(\mathbf{I-P})g}|\\[5pt]
		\leq&\delta\frac{\|\p_x^\a(\mathbf{I-P})g\|_\nu^2}{\v}+C_\delta (T)\mathbb{E}(t)+C_\delta (T)\v.
		\end{aligned}
		\end{equation}
		For $B_{6}$,  using $ u=\psi+u_0$ again, we have
		\begin{equation}
		\begin{aligned}
		B_{6}=&\Big|\inxv{\p_x^\a\Big\{\big[(v-u)\cdot \n_x u\big]\cdot(v-u)\M\Big\},\p^\a_x(\mathbf{I-P})g}\Big|\\
		\leq&\delta\frac{\|\p_x^\a(\mathbf{I-P})g\|_\nu^2}{\v}+C_\delta (T)\mathbb{E}(t)+C_\delta (T)\v.
		\end{aligned}
		\end{equation}
		For $B_7$, noting that $\p_t u=\p_t\psi+\p_tu_0$, we have
		\begin{equation}
		\begin{aligned}
		B_7=&\big|\inxv{\p_x^\a\left[\frac{1}{2}(\mathbf{I-P})g(v-u)\cdot \p_t u\right],\p^\a_x(\mathbf{I-P})g}\big|\\[5pt]	
		\leq&\delta\frac{\|\p_{x}^\a(\mathbf{I-P})g\|_\nu^2}{\v}+C_\delta (T)\mathbb{E}(t)+C_\delta (T)\v.
		\end{aligned}
		\end{equation}
		For $B_8$, using $u=\psi+u_0$, we get
		\begin{equation}
		\begin{aligned}
		B_8=&\big|\inxv{\p_x^\a\left[\frac{1}{2}(\mathbf{I-P})g(v\cdot \n_xu)\cdot(v-u)\right],\p^\a_x(\mathbf{I-P})g}\big|\\
		\leq&\delta\frac{\|\p_x^\a(\mathbf{I-P})g\|_\nu^2}{\v}+C_\delta (T)\mathbb{E}(t)+C_\delta (T)\v.
		\end{aligned}
		\end{equation}
		For $B_9$, we have the similar estimate as for $B_4-B_6$,
		\begin{equation}
		B_9\leq\delta\frac{\|\p_x^\a(\mathbf{I-P})g\|_\nu^2}{\v}+ C_\delta (T)\mathbb{E}(t)+ C_\delta (T)\v.
		\end{equation}
		For $B_{10} $, we have
		\begin{equation}\label{eq:B10}
		\begin{aligned}
		B_{10}=&|\inxv{\p_x^\a(\l v\cdot \n_x((\mathbf{I-P})g\M),v-u\r_v\cdot (v-u)\M),\p^\a_x(\mathbf{I-P})g}|\\[5pt]
		\leq&\delta\frac{\|\p_x^\a(\mathbf{I-P})g\|_\nu^2}{\v}+C_\delta (T)\mathbb{E}(t)+C_\delta (T)\v+C_\delta (T)\v\|(\mathbf{I-P})g\|_{H^3_x L^2_v}^2\\
		\leq&\delta\frac{\|\p_x^\a(\mathbf{I-P})g\|_\nu^2}{\v}+C_\delta (T)\mathbb{E}(t)+C_\delta (T)\v.
		\end{aligned}
		\end{equation}
		Plugging \eqref{eq:B1}-\eqref{eq:B10} into \eqref{i-pgh2}, using \eqref{eq:x mi} in Lemma \ref{lem:property mi}, choosing $\delta$ small enough, we obtain
		\begin{equation}\label{(i-p)g2-0}
		\dt{t}\Lxv{(\mathbf{I-P})g}^2+C_1\frac{\|(\mathbf{I-P})g\|_\nu^2}{\v} \leq C(T)\mathbb{E}(t)+C(T)\v,	
		\end{equation}
		and the first-order spatial derivative estimates
		\begin{equation}\label{(i-p)g2-1}
		\begin{aligned}
	&\dt{t}\Lxv{\nabla_x(\mathbf{I-P})g}^2+C_1\sum\limits_{|\a|=1}\frac{\|\p_x^\a (\mathbf{I-P})g\|_\nu^2}{\v}\\
	 \leq &C(T)\frac{\|(\mathbf{I-P})g\|_\nu^2}{\v}+ C(T)\mathbb{E}(t)+C(T)\v,	
			\end{aligned}
	\end{equation}	
	and the second-order spatial derivative estimates
			\begin{equation}\label{(i-p)g2-2}
			\begin{aligned}
		&\dt{t}\Lxv{\nabla_x^2(\mathbf{I-P})g}^2+C_1\sum\limits_{|\a|=2}\frac{\|\p_x^\a (\mathbf{I-P})g\|_\nu^2}{\v}\\
		 \leq& C(T)\sum\limits_{|\a|\leq 1}\frac{\|\p_x^\a (\mathbf{I-P})g\|_\nu^2}{\v}+ C(T)\mathbb{E}(t)+C(T)\v.
				\end{aligned}	
		\end{equation}
Combining  \eqref{(i-p)g2-0}-\eqref{(i-p)g2-2}, we obtain
	  	\begin{equation}\label{(i-p)g2}
	  \begin{aligned}
	  &4\left(\frac{C(T)}{C_1}+1\right)^2\dt{t}\Lxv{(\mathbf{I-P})g}^2+\left(\frac{2C(T)}{C_1}+1\right)\dt{t}\Lxv{\nabla_x(\mathbf{I-P})g}^2\\
	  &+\dt{t}\Lxv{\nabla^2_x(\mathbf{I-P})g}^2+C_1\sum\limits_{|\a|\leq 2}\frac{\|\p_x^\a (\mathbf{I-P})g\|_\nu^2}{\v}\\
	  \leq& C(T)\mathbb{E}(t)+C(T)\v.
	  \end{aligned}	
	  \end{equation}	
	Furthermore, applying $\p^\theta_x$ with $|\theta|=3$ to \eqref{i-p}, multiplying by $\v\p^\theta_x(\mathbf{I-P})g$, and integrating over $x, v \in \mathbb{R}^3$, we obtain, similarly to the preceding estimates,
	\begin{equation}\label{eq:123}
	\begin{aligned}
	&\sum\limits_{|\theta|=3}\v\dt{t}\|\p^\theta_x(\mathbf{I-P})g\|_{L^2_x L^2_v}^2+C_1\sum\limits_{|\theta|=3 }\|\p_x^\theta (\mathbf{I-P})g\|_\nu^2\\[3pt]
	&\leq C(T)\mathbb{E}(t)+C(T)\v+C\v^2\|\p_t\psi\|_{H^3_x}^2\\[3pt]
	&\quad+C(T)\v^2\|\n_x^4(\mathbf{I-P})g\|_{L^2_x L^2_v}^2+C(T)\v^2\|\n^4_x b\|_{L^2_x}^2+C(T)\sum\limits_{|\a|\leq 2}\|\p_x^\a(\mathbf{I-P})g\|_\nu^2.
	\end{aligned}
	\end{equation}
	In addition, by an argument similar to that used for \eqref{eq:mi H4}, we have the following bound:
	\begin{equation}\label{eq:11}
	\begin{aligned}
	&\v\|\n_x^4(\mathbf{I-P})g\|_{L^2_x L^2_v}^2+\v\|\n^4_x b\|_{L^2_x}^2\\[5pt]
	\leq & C\v\sum\limits_{|\lambda|=4 }(\|b_\lambda\|_{L^2_x}^2+\|(\mathbf{I-P})\p_x^\lambda h\|_\nu^2)+C (T) \sum\limits_{|\a|\leq 3}\|\p_x^\a(\mathbf{I-P})g\|_\nu^2\\[5pt]
    &+C(T)\E(t)+C(T)\v.
	\end{aligned}
	\end{equation}
	Substituting \eqref{eq:11} into \eqref{eq:123}, we have
	\begin{equation}\label{(i-p)g3}
	\begin{aligned}
	&\sum\limits_{|\theta|=3}\v\dt{t}\|\p^\theta_x(\mathbf{I-P})g\|_{L^2_x L^2_v}^2+C_1\sum\limits_{|\theta|=3 }\|\p_x^\theta (\mathbf{I-P})g\|_\nu^2\\[5pt]
	\leq& C(T)\mathbb{E}(t)+C(T)\v+C(T)\v^2\|\p_t \psi\|_{H^3_x}^2\\[5pt]
	&+C(T)\v^2\sum\limits_{|\lambda|=4}(\|b_\lambda\|_{L^2_x}^2+\|(\mathbf{I-P})\p_x^\lambda h\|_\nu^2)+C(T)\sum\limits_{|\a|\leq 2}\|\p_x^\a(\mathbf{I-P})g\|_\nu^2.
	\end{aligned}
	\end{equation}
		
	Finally, adding \eqref{(i-p)g2} and \eqref{(i-p)g3} together, choosing $\varepsilon$ small enough, we obtain
		\begin{equation}
		\begin{aligned}
		 &4\left(\frac{C(T)}{C_1}+1\right)^2\dt{t}\Lxv{(\mathbf{I-P})g}^2+\left(\frac{2C(T)}{C_1}+1\right)\dt{t}\Lxv{\nabla_x(\mathbf{I-P})g}^2\\[5pt]
		&+\dt{t}\Lxv{\nabla^2_x(\mathbf{I-P})g}^2+\v\dt{t}\|\n_x^3(\mathbf{I-P})g\|_{L^2_x L^2_v}^2\\[5pt]
		&+\frac{C_1}{2}\sum\limits_{|\a|\leq 2}\frac{\|\p_x^\a(\mathbf{I-P})g\|_\nu^2}{\v}+\frac{C_1}{2}\sum\limits_{|\theta|=3 }\|\p_x^\theta (\mathbf{I-P})g\|_\nu^2\\[5pt]
		\leq &C(T)\mathbb{E}(t)+C(T)\v
		+C(T)\v^2\|\p_t\psi\|_{H^3_x}^2  +C(T)\v^2\sum\limits_{|\lambda|=4}(\|b_\lambda\|_{L^2_x}^2+\|(\mathbf{I-P})\p_x^\lambda h\|_\nu^2),
		\end{aligned}
		\end{equation}	
		which yields \eqref{eq:mi H3x}. This completes the proof.
	\end{proof}
	
Now, we first summarize all the estimates except those involving the mixed spatial-velocity derivatives of $\mi$. The bounds for these mixed derivatives will be established in the next lemma, thereby completing the overall energy estimates.
	\begin{corollary}\label{coro:ma g mix}	Under the conditions of Theorem \ref{thm1} and the \textit{a priori} assumption \eqref{assumption}, for $t\in [0,T_1] $, we have
    \begin{equation}\label{eq:ma g mix}
         \begin{aligned}
         &\Hxxx{(\phi,\psi,b)}^2+\frac{\Hxx{b}^2}{\varepsilon}+\varepsilon\Hxx{\p_t\psi}^2+\varepsilon\Hxxx{\nabla_x\psi}^2+\varepsilon^2\HxxxLv{\nabla_xh}^2+\varepsilon^3\Hthreex{\nabla_x^2 u}^2\\
          &+\|(\mathbf{I-P})g\|_{H^2_x L^2_v}^2+\v\|(\mathbf{I-P})g\|_{H^3_x L^2_v}^2\\[5pt]
         & +\int_0^t\Big(\sum\limits_{|\a|\leq 2}\frac{\|\p_x^\a(\mathbf{I-P})g\|_\nu^2}{\v}+\sum\limits_{|\theta|=3 }\|\p_x^\theta(\mathbf{I-P})g\|_\nu^2\Big)\mathrm{d}s\\
         &+\int_0^t\left( \Hxxx{\nabla_x\psi}^2+\v\|\n_x\psi\|_{H^4_x}^2+\frac{\Hxxx{b}^2}{\varepsilon}+\Hxx{\p_tb}^2+\Hxx{\p_t\psi}^2+\varepsilon\Hxx{\nabla_x\p_t\psi}^2\right)\d s\\
         &+\intt{\left(\varepsilon^3\Hthreex{\nabla_x\pt u}^2+\sum\limits_{1\leq|\alpha|\leq4}\varepsilon(\Ltwox{b_\alpha}^2+\Lv{\mi \px^\alpha h}^2)\right)}{s}\\
         \leq&C(T)\E(0)+C(T)\varepsilon+C(T)\intt{\E(s)}{s},
         \end{aligned}
         \end{equation}
           where the energy functional $\E(t)$ is given in \eqref{defE} and $C(T) > 1$ is a constant only depending on $T$.
	\end{corollary}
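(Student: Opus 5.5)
The plan is to add the macroscopic estimate of Proposition \ref{lem:ma g} to a suitably large multiple of the microscopic estimate of Lemma \ref{lem:mi H3x}. Each estimate's leftover right-hand-side terms are then absorbed by dissipation supplied by the other.

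\textbf{Step 1: the residual of Proposition \ref{lem:ma g}.} The only term on the right of \eqref{eq:ma g} beyond $C(T)\E(0)+C(T)\varepsilon+C(T)\int_0^t\E$ is
\[
C(T)\int_0^t\sum_{|\theta|\leq 3}\|\p_x^\theta(\mathbf{I-P})g\|_\nu^2\,\d s .
\]
I split it by order. The part with $|\theta|\leq 2$ is bounded by $\varepsilon\, C(T)\int_0^t\sum_{|\alpha|\leq2}\varepsilon^{-1}\|\p_x^\alpha(\mathbf{I-P})g\|_\nu^2$, which is a small fraction of the $\varepsilon^{-1}$-weighted dissipation in \eqref{eq:mi H3x} once $\varepsilon\leq\varepsilon_0(T)$. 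The part with $|\theta|=3$ is matched by the order-one dissipation $\sum_{|\theta|=3}\|\p_x^\theta(\mathbf{I-P})g\|_\nu^2$ of \eqref{eq:mi H3x}. To absorb it, I multiply \eqref{eq:mi H3x} by a constant $K=K(T)$ with $K\geq 2C(T)$, where $C(T)$ is the constant in \eqref{eq:ma g}.

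\textbf{Step 2: the residual of Lemma \ref{lem:mi H3x}.} After multiplication by $K$, the extra terms on the right of \eqref{eq:mi H3x} are
\[
KC(T)\int_0^t\Big[\varepsilon^2\|\p_t\psi\|_{H^3_x}^2+\varepsilon^2\sum_{|\lambda|=4}\big(\|b_\lambda\|_{L^2_x}^2+\|(\mathbf{I-P})\p_x^\lambda h\|_\nu^2\big)\Big]\d s .
\]
The second group is $\varepsilon\,KC(T)$ times the dissipation $\varepsilon\sum_{1\leq|\alpha|\leq4}(\|b_\alpha\|^2+\|(\mathbf{I-P})\p_x^\alpha h\|_\nu^2)$ on the left of \eqref{eq:ma g}. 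For the first, I use $\|\p_t\psi\|_{H^3_x}^2\lesssim\|\p_t\psi\|_{H^2_x}^2+\|\nabla_x\p_t\psi\|_{H^2_x}^2$. The quantity $\varepsilon^2\|\p_t\psi\|_{H^3_x}^2$ is then bounded by $\varepsilon^2\|\p_t\psi\|_{H^2_x}^2+\varepsilon\cdot\varepsilon\|\nabla_x\p_t\psi\|_{H^2_x}^2$, and both pieces are controlled by the dissipation $\int_0^t(\|\p_t\psi\|_{H^2_x}^2+\varepsilon\|\nabla_x\p_t\psi\|_{H^2_x}^2)$ in \eqref{eq:ma g}. So choosing $\varepsilon_0(T)$ with $\varepsilon_0 KC(T)\leq\frac14$ absorbs all of these terms. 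The order matters: fix $K$ first, depending only on $T$, and only then shrink $\varepsilon$. Since \eqref{eq:mi H3x} contains no macroscopic residual, this causes no circularity.

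\textbf{Step 3: assembly.} Adding $K\times$\eqref{eq:mi H3x} to \eqref{eq:ma g} and absorbing as above leaves exactly the left side of \eqref{eq:ma g mix}, with energy terms $\|(\mathbf{I-P})g\|_{H^2_xL^2_v}^2+\varepsilon\|(\mathbf{I-P})g\|_{H^3_xL^2_v}^2$ and the listed dissipations. The right side is $C(T)\E(0)+C(T)\varepsilon+C(T)\int_0^t\E$. The factor $\varepsilon\|\n_x\psi\|_{H^4_x}^2$ matches $\varepsilon\|\nabla_x\psi\|_{H^4_x}^2$ in \eqref{eq:ma g}. The mixed velocity derivatives in $\E$ are not needed on the left, since $\E$ appears on the right only inside a time integral.

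\textbf{Main obstacle.} The delicate point is the ordering of constants between the two estimates. The order-one dissipation of $\nabla_x^3(\mathbf{I-P})g$ must beat $C(T)$ from \eqref{eq:ma g}, which forces the large multiplier $K$. That enlarged constant then multiplies the $\varepsilon^2$-terms of Lemma \ref{lem:mi H3x}, which must be re-absorbed by the $\varepsilon$-weighted dissipations of Proposition \ref{lem:ma g}. This works only because every such term carries one spare power of $\varepsilon$ relative to the available dissipation. Checking this spare power for $\varepsilon^2\|\p_t\psi\|_{H^3_x}^2$ is the step I expect to need the most care.
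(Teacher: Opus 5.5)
Your proposal is correct and follows the paper's route. The paper's proof simply combines \eqref{eq:ma g} with \eqref{eq:mi H3x} and takes $\varepsilon$ small. Your bookkeeping is exactly the detail that one-line argument leaves implicit: a $T$-dependent multiplier $K$ on \eqref{eq:mi H3x} absorbs the $|\theta|=3$ residual, and $\varepsilon$ is then taken small so that the $\varepsilon^2$-weighted terms are absorbed by the $\varepsilon$-weighted dissipations of Proposition \ref{lem:ma g}, including $\varepsilon^2\|\partial_t\psi\|_{H^3_x}^2\le \varepsilon^2\|\partial_t\psi\|_{H^2_x}^2+\varepsilon\cdot\varepsilon\|\nabla_x\partial_t\psi\|_{H^2_x}^2$.
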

	\begin{proof}
  Combining \eqref{eq:ma g} and \eqref{eq:mi H3x} and choosing $\delta$ and $\varepsilon$ sufficiently small directly yields the corollary.
	\end{proof}
	
\begin{lemma}\label{lem:mi xv}	Under the conditions of Theorem \ref{thm1} and the \textit{a priori} assumption \eqref{assumption}, for $t\in [0,T_1] $, we have
		\begin{equation}\label{eq:mi xv}
		\begin{aligned}
		&\sum\limits_{0\leq |\a|\leq 2,1\leq |\b|\leq 2}\Big(\|\p^\a_x\p^\b_v(\mathbf{I-P})g\|_{L^2_x L^2_v}^2+\int_0^t\frac{\|\p^\a_x\p^\b_v(\mathbf{I-P})g\|_\nu^2}{\v}\mathrm{d}s\Big)\\
		\leq& C(T)\mathbb{E}(0)+C(T)\varepsilon+C(T)\int_0^t \E(s)\mathrm{d}s,
		\end{aligned}
		\end{equation}
          where the energy functional $\E(t)$ is given in \eqref{defE} and $C(T) > 1$ is a constant only depending on $T$.
	\end{lemma}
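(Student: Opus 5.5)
We will prove Lemma \ref{lem:mi xv} by a mixed-derivative energy estimate on the microscopic equation \eqref{i-p}, inducting on the velocity order $|\b|=1,2$. For fixed $0\leq|\a|\leq 2$ and $1\leq|\b|\leq 2$ we apply $\p^\a_x\p^\b_v$ to \eqref{i-p}, test with $\p^\a_x\p^\b_v(\mathbf{I-P})g$, and integrate over $x,v$. The singular operator
\[
\mathbf{L}=-\Delta_v+\tfrac{|v-u|^2}{4}-\tfrac32
\]
does not commute with $\p_v^\b$. Since $\p_{v_i}\big(\tfrac{|v-u|^2}{4}\big)=\tfrac{v_i-u_i}{2}$ and $\p^2_{v_iv_j}\big(\tfrac{|v-u|^2}{4}\big)=\tfrac12\delta_{ij}$, one computes
\[
\p^\b_v\mathbf{L}F=\mathbf{L}\p^\b_vF+\sum_{\b'\leq\b,|\b'|\neq0}C^{\b'}_\b\,\p^{\b'}_v\Big(\tfrac{|v-u|^2}{4}\Big)\p^{\b-\b'}_vF,
\]
whose commutator terms are at most linear in $(v-u)$ and carry velocity order at most $|\b|-1$. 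We do not have a coercivity estimate modulo the kernel for $\p_v^\b$-derivatives. However, $\langle\mathbf{L}F,F\rangle\geq \|\n_vF\|^2+\tfrac14\||v-u|F\|^2-\tfrac32\|F\|^2$, and this gives the $\nu$-norm dissipation up to a lower-order $L^2$ term. Hence we obtain
\[
\frac{1}{\v}\big\langle \mathbf{L}\p^\a_x\p^\b_v F,\p^\a_x\p^\b_vF\big\rangle\geq \frac{1}{\v}\|\p^\a_x\p^\b_vF\|_\nu^2-\frac{C}{\v}\|\p^\a_x\p^\b_vF\|_{L^2}^2 .
\]
The bad term $\v^{-1}\|\p^\a_x\p^\b_vF\|_{L^2}^2$ is bounded by $\v^{-1}\|\p^\a_x\p^{\b-\b'}_vF\|_\nu^2$ with $|\b'|=1$. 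We also include the $x$-derivatives of $\mathbf L$ through \eqref{Lx gamma}, exactly as for $B_1$ in Lemma \ref{lem:mi H3x}. For $|\b|=1$ this is controlled by the $x$-dissipation $\sum_{|\a|\leq2}\v^{-1}\|\p^\a_x\mi g\|_\nu^2$, already integrable in time by Corollary \ref{coro:ma g mix}. For $|\b|=2$ it is controlled by the $|\b|=1$ step. The commutator terms have the same size and are handled the same way after Cauchy--Schwarz with weight $\delta$.

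The main obstacle is the transport commutator $\p_v^\b(v\cdot\n_x F)=v\cdot\n_x\p^\b_vF+\b_i\p_{x_i}\p^{\b-e_i}_vF$. It produces
\[
\big|\langle \p_x^{\a+e_i}\p_v^{\b-e_i}F,\p_x^\a\p^\b_vF\rangle\big|,
\]
which can reach spatial order $3$ when $|\a|=2$, with no $1/\v$ gain. Our first attempt is to bound it by
\[
\delta\v^{-1}\|\p^\a_x\p^\b_vF\|_\nu^2+C_\delta\v\|\p_x^{\a+e_i}\p^{\b-e_i}_vF\|^2_{L^2}.
\]
When $|\b|=1$ the second term is $\v\|\nabla_x^{|\a|+1}\mi g\|^2_{L^2}$. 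For $|\a|\leq1$ this is at most $\v\|\mi g\|_{\nu}$-dissipation. For $|\a|=2$ it is the quantity $\sum_{|\theta|=3}\|\p_x^\theta\mi g\|_\nu^2$, which is integrable by Corollary \ref{coro:ma g mix}. When $|\b|=2$ the term becomes a mixed derivative of velocity order one. We absorb it using the $|\b|=1$ bound with a larger weight, by the same weighted-sum trick as in \eqref{(i-p)g2}, summing the inequalities with weights $K\gg1$ on lower orders. The term $\v^{-1}\mathbf{L}$ also acts on $\p_v^\b$ of the kernel parts, but only through $R_{mi}$, since \eqref{i-p} is already written for $\mi g$.

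The source $R_{mi}$ consists of terms of the form (macroscopic quantity)$\times$(polynomial in $v-u$)$\sqrt{M_u}$, plus the $\mi g$-terms. Velocity derivatives of polynomial$\times\sqrt{M_u}$ remain Gaussian-decaying, so these pieces are estimated exactly as $B_2$--$B_{10}$. The bound is $\delta\v^{-1}\|\p^\a_x\p^\b_v\mi g\|_\nu^2+C_\delta(T)\E(t)+C_\delta(T)\v$, where for the velocity-weighted $\mi g$-terms we move one $v$-derivative by integration by parts. For the nonlocal term in $B_{10}$ we use $\|\cdot\|_{H^3_xL^2_v}$ of $\mi g$, which is controlled as in \eqref{eq:B10}. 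Summing the weighted inequalities, choosing $\delta$ small, and integrating in time gives the stated bound. The lower-order right-hand quantities are replaced by $C(T)\E(0)+C(T)\v+C(T)\int_0^t\E\,\d s$ via Corollary \ref{coro:ma g mix}, and the initial mixed norms are contained in $\E(0)$.
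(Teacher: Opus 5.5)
Your route is correct in outline but extracts the singular dissipation differently from the paper. The paper applies the kernel-coercivity of Lemma \ref{lemma A2} to $G=\p^\a_x\p^\b_v\mi g$, getting $c\v^{-1}\|\mi G\|_\nu^2$. It then recovers $\|G\|_\nu$ through \eqref{eq:xv mi} in Lemma \ref{lem:property mi}. The kernel part of $G$ only involves $\int G\sqrt{M_u}\,\d v$ and $\int G\,(v-u)\sqrt{M_u}\,\d v$, and moving $\p^\b_v$ onto the Gaussian bounds these by $\|\p^\a_x\mi g\|_{L^2_xL^2_v}$. So for both $|\b|=1$ and $|\b|=2$ the only lower-order cost is the spatial dissipation $\v^{-1}\sum_{|\a|\leq 2}\|\p^\a_x\mi g\|_\nu^2$ from Corollary \ref{coro:ma g mix}.

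Your G{\aa}rding bound $\langle\mathbf{L}G,G\rangle\geq\frac14\|G\|_\nu^2-\frac74\|G\|_{L^2_xL^2_v}^2$, combined with $\|G\|_{L^2_xL^2_v}\leq\|\p^\a_x\p^{\b-\b'}_v\mi g\|_\nu$, avoids computing any projection. The price is a hierarchy in $|\b|$, in which the $|\b|=2$ step consumes the time-integrated $|\b|=1$ dissipation. That hierarchy is harmless: weight the $|\b|=1$ inequalities by a large constant, or integrate them first. It is also the natural bookkeeping for the $\v^{-1}$ commutator $J_1$ when $|\b|=2$, whose top-order part involves a $\nu$-norm of velocity order one.

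One step fails as written: the transport commutator when $|\a|=|\b|=2$. There your remainder $C_\delta\v\|\p^{\a+e_i}_x\p^{\b-e_i}_v\mi g\|_{L^2_xL^2_v}^2$ has spatial order $3$ and velocity order $1$. The $|\b|=1$ step only controls mixed derivatives of spatial order at most $2$, so it cannot absorb this term.

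The repair is the one-line bound the paper uses in \eqref{eq:J3}. Since $\|\n_vF\|_{L^2_xL^2_v}\leq\|F\|_\nu$, one has $\v\|\p^{\a+e_i}_x\p^{\b-e_i}_v\mi g\|_{L^2_xL^2_v}^2\leq\v\|\p^{\a+e_i}_x\mi g\|_\nu^2$. This is time-integrable by Corollary \ref{coro:ma g mix}, which contains $\int_0^t\sum_{|\theta|=3}\|\p^\theta_x\mi g\|_\nu^2\,\d s$, for all $|\a|\leq 2$ and $1\leq|\b|\leq 2$. No appeal to the $|\b|=1$ estimate is needed for this term. With this change your argument closes.
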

\begin{proof}
	Applying $\partial^\alpha_x\partial^\beta_v$ with $0\leq|\alpha|\leq 2,\ 1\leq|\beta|\leq 2$ to $\eqref{i-p}$, multiplying the resulting identity by $\partial^\alpha_x\partial^\beta_v(\mathbf{I-P})g$, and integrating over $x,v \in \mathbb{R}^3$, we obtain
	\begin{equation}\label{eq:+remainder}
	\frac{1}{2}\dt{t} \|\partial^\alpha_x\partial^\beta_v(\mathbf{I-P})g\|_{L^2_x L^2_v}^2+\frac{c\|\mi \partial^\alpha_x\partial^\beta_v(\mathbf{I-P})g\|_\nu^2}{\v}
	\lesssim \sum\limits_{i=1}^{12}J_i.
	\end{equation}
	Here, we have utilized the expansion
	\begin{equation*}
	\begin{aligned}
	\partial^\alpha_x\partial^\beta_v\mathbf{L}(\mathbf{I-P})g=&\mathbf{L}\big(\partial^\alpha_x\partial^\beta_v(\mathbf{I-P})g\big)+\sum\limits_{\lambda\leq \alpha,|\lambda|\neq 0} C^{\lambda}_{\alpha}\partial^\beta_v \left[\partial^\lambda_x \left(\frac{|v-u|^2}{4}\right) \partial^{\alpha-\lambda}_x(\mathbf{I-P})g \right]\\[5pt]
	&+\sum\limits_{\theta\leq \beta,|\theta|\neq 0} C^{\theta}_{\beta}\partial^\theta_v \left(\frac{|v-u|^2}{4}\right) \partial^{\beta-\theta}_v\partial^{\alpha}_x(\mathbf{I-P})g,
	\end{aligned}
	\end{equation*}
	where $\lambda$ and $\theta$ stand for multi-indices satisfying $\lambda \leq \alpha,\ |\lambda|\neq 0$ and $\theta \leq \beta,\ |\theta|\neq 0$, respectively.
	
	The quantities $J_i$ are defined as follows:
	\begin{equation*}
	\begin{aligned}
	J_1=&\ \frac{1}{\v}\Big|C^{\lambda}_{\alpha}\Big\langle\partial^\beta_v \left[\partial^\lambda_x \left(\frac{|v-u|^2}{4}\right) \partial^{\alpha-\lambda}_x(\mathbf{I-P})g \right],\partial^\alpha_x\partial^\beta_v(\mathbf{I-P})g\Big\rangle_{x,v}\Big|,\\[5pt]
	J_2=&\ \frac{1}{\v}\Big|C^{\theta}_{\beta}\Big\langle\partial^\theta_v \left(\frac{|v-u|^2}{4}\right) \partial^{\beta-\theta}_v\partial^\alpha_x(\mathbf{I-P})g,\partial^\alpha_x\partial^\beta_v(\mathbf{I-P})g\Big\rangle_{x,v}\Big|,\\[5pt]
	J_3=&\ \Big|\Big\langle\partial^\alpha_x\partial^\beta_v\big[v\cdot \nabla_x(\mathbf{I-P})g\big],\partial^\alpha_x\partial^\beta_v(\mathbf{I-P})g\Big\rangle_{x,v}\Big|,\\[5pt]
	J_4=&\ \Big|\Big\langle\partial^\alpha_x\partial^\beta_v\Big\{\big\langle v\cdot \nabla_x\big[(\mathbf{I-P})g\, \M \big],v-u\big\rangle_v\cdot (v-u)\Big\},\partial^\alpha_x\partial^\beta_v(\mathbf{I-P})g\Big\rangle_{x,v}\Big|\\
    	J_5=&|\inxv{\p_{x}^\a\p_v^\b(b\cdot \p_t u\M),\p^\a_x\p_v^\b(\mathbf{I-P})g}|\\[3pt]
		&+\Big|\inxv{\p_x^\a\p_v^\b\Big\{\big[b\cdot(v-u)\big]\big[\p_t u\cdot(v-u)\M\big]\Big\},\p^\a_x\p_v^\b(\mathbf{I-P})g}\Big|,\\[5pt]
		J_6=&\Big|\inxv{\p_x^\a\p_v^\b\Big\{\big[a(v-u)\cdot \n_x u\big]\cdot(v-u)\M\Big\},\p^\a_x\p_v^\b(\mathbf{I-P})g}\Big|,\\[5pt]
		J_7=&\Big|\inxv{\p_x^\a\p_v^\b\Big\{\big[(v-u)\cdot\n_xb\big]\cdot(v-u)\M\Big\},\p^\a_x\p_v^\b(\mathbf{I-P})g}\Big|,\\[5pt]
		J_8=&\big|\inxv{\p_x^\a\p_v^\b\big[(v\cdot\n_xu)\cdot b\M\big],\p^\a_x\p_v^\b(\mathbf{I-P})g}\big|\\[3pt]
		&+\big|\inxv{\p_x^\a\p_v^\b\big[b\cdot(v-u)(v\cdot \n_xu)\cdot(v-u)\M\big],\p^\a_x\p_v^\b(\mathbf{I-P})g}\big|,\\[5pt]
		J_9=&\Big|\inxv{\p_x^\a\p_v^\b\Big\{\big[(v-u)\cdot \n_x u\big]\cdot(v-u)\M\Big\},\p^\a_x\p_v^\b(\mathbf{I-P})g}\Big|,\\[5pt]
		J_{10}=&\big|\inxv{\p_x^\a \p_v^\b\left[\frac{1}{2}(\mathbf{I-P})g(v-u)\cdot \p_t u\right],\p^\a_x\p_v^\b(\mathbf{I-P})g}\big|,\\[5pt]
		J_{11}=&\big|\inxv{\p_x^\a\p_v^\b\left[\frac{1}{2}(\mathbf{I-P})g(v\cdot \n_xu)\cdot(v-u)\right],\p^\a_x\p_v^\b(\mathbf{I-P})g}\big|,\\[5pt]
		J_{12}=&\big|\inxv{\p_x^\a\p_v^\b\big[\mathrm{div}_xb\M+(b\cdot \n_xu)\cdot(v-u)\M\big],\p^\a_x\p_v^\b(\mathbf{I-P})g}\big|,\\[5pt]
	\end{aligned}
	\end{equation*}
	where the summations $\sum\limits_{1 \leq |\lambda| \leq |\alpha|}$ and $\sum\limits_{1 \leq |\theta| \leq |\beta|}$ are suppressed for conciseness.
		
		In what follows, we only present the detailed estimates for $J_1-J_4$, while the estimates for $ J_{5}-J_{12}$ are analogous to those in Lemma \ref{lem:mi H3x}.

        First, for $J_1$, we integrate by parts. For $|\beta'|=1$ and $\beta'\leq \beta$, we have $|\beta-\beta'|\leq 1$, and
        \begin{equation}\label{eq:J1}
		\begin{aligned}
		J_1=&\frac{1}{\v}|C^{\lambda}_\a\inxv{\p^\b_v[\p^\lambda_x(\frac{|v-u|^2}{4})\p^{\a-\lambda}_x(\mathbf{I-P})g],\p^\a_x\p^\b_v(\mathbf{I-P})g}|\\
        =&\frac{1}{\v}|C^{\lambda}_\a\inxv{\p^{\b-\beta'}_v[\p^\lambda_x(\frac{|v-u|^2}{4})\p^{\a-\lambda}_x(\mathbf{I-P})g],\p^\a_x\p^{\b+\beta'}_v(\mathbf{I-P})g}|\\
		\leq&\delta\frac{\|\p^\a_x\p^\b_v(\mathbf{I-P})g\|_\nu^2}{\v}+C_\delta (T)\sum\limits_{ |\a|\leq 2}\frac{\Lv{\px^\a \mi g}^2}{\varepsilon}.
		\end{aligned}
		\end{equation}
		For $J_2$, noting that $|\beta-\theta|\leq 1 $, we have
		\begin{equation}\label{eq:J2}
		\begin{aligned}
		J_2=&\frac{1}{\v}|C^{\theta}_\b\inxv{\p^\theta_v(\frac{|v-u|^2}{4})\p^{\b-\theta}_v\p^\a_x(\mathbf{I-P})g,\p^\a_x\p^\b_v(\mathbf{I-P})g}|\\[5pt]
		\leq&\delta\frac{\|\p^\a_x\p^\b_v(\mathbf{I-P})g\|_\nu^2}{\v}+C_\delta\frac{\|\p_x^\a(\mathbf{I-P})g\|_\nu^2}{\v}.
		\end{aligned}
		\end{equation}
		Similarly, for $J_3$, we have
		\begin{equation}\label{eq:J3}
		\begin{aligned}
		J_3=&|\inxv{\p^\a_x\p^\b_v(v\cdot \n_x(\mathbf{I-P})g),\p^\a_x\p^\b_v(\mathbf{I-P})g}|\\[5pt]
		\leq&\delta\frac{\|\p^\a_x\p^\b_v(\mathbf{I-P})g\|_\nu^2}{\v}+\v C_\delta (T)\sum\limits_{|\chi|\leq 3}\|\p_x^\chi(\mathbf{I-P})g\|_\nu^2,
		\end{aligned}
		\end{equation}	
        where we use
       	\begin{equation*}
		\begin{aligned}
       |\inxv{v\cdot (\p^\a_x\p^\b_v \n_x(\mathbf{I-P})g),\p^\a_x\p^\b_v(\mathbf{I-P})g}|=0.
       \end{aligned}
		\end{equation*}	
		For $J_4$, we have
		\begin{equation}\label{eq:J4}
		\begin{aligned}
		J_4=&|\inxv{\p^\a_x\p^\b_v \left( \intv{(v-u)v\cdot \n_x((\mathbf{I-P})g\M)}\cdot (v-u)\M \right),\p^\a_x\p^\b_v(\mathbf{I-P})g}|\\[5pt]
		\leq&\delta\frac{\|\p^\a_x\p^\b_v(\mathbf{I-P})g\|_\nu^2}{\v}+\v C_\delta (T)\sum\limits_{|\chi|\leq 3}\|\p_x^\chi(\mathbf{I-P})g\|_\nu^2.\\
		\end{aligned}
		\end{equation}
       For $J_{5}-J_{12} $, we can similarly find
       \begin{equation}\label{eq:J5}
        \begin{aligned}
         \sum\limits_{5\leq i \leq 12} J_{i}\leq \delta\frac{\|\p^\a_x\p^\b_v(\mathbf{I-P})g\|_\nu^2}{\v}+C_\delta (T)\E(t)+C_\delta (T)\v .
        \end{aligned}
       \end{equation}

	Hence, substituting \eqref{eq:J1}--\eqref{eq:J5} into \eqref{eq:+remainder} and summing over $|\alpha|\leq 2$, $1\leq |\beta|\leq 2$, we choose $\delta$ sufficiently small to obtain
		\begin{multline*}
		\sum\limits_{0\leq |\a|\leq 2,1\leq|\b|\leq 2}\left(\dt{t}\|\p^\a_x\p^\b_v(\mathbf{I-P})g\|_{L^2_x L^2_v}^2+\frac{C_1\|\p^\a_x\p^\b_v(\mathbf{I-P})g\|_\nu^2}{\v}\right)\\[5pt]
		\leq C(T)\E(t)+C(T)\v+ \v C (T)\sum\limits_{|\chi|\leq 3}\|\p_x^\chi(\mathbf{I-P})g\|_\nu^2 +C(T)\sum\limits_{ |\a|\leq 2}\frac{\|\p_x^\a(\mathbf{I-P})g\|_\nu^2}{\v},
		\end{multline*}
		   where we have used \eqref{eq:xv mi} in Lemma \ref{lem:property mi}. Integrating the above over $[0,t]$ and combining the result with \eqref{eq:ma g mix}, we can finally obtain \eqref{eq:mi xv}. This completes the proof.
	\end{proof}
	
After obtaining all required estimates,  we are now in a position to present the total energy inequality, which is a direct consequence of combining Corollary \ref{coro:ma g mix}, Lemma \ref{lem:mi xv}, and the definition of $\E(t)$ in \eqref{defE}.	

\begin{proposition}\label{prop:total energy} Under the conditions of Theorem \ref{thm1} and the \textit{a priori} assumption \eqref{assumption}, for $t\in [0,T_1] $, one has
	\begin{equation}\label{eq:total nergy}
	\E(t)\leq C(T)\E(0)+C(T)\varepsilon+C(T)\intt{\E(s)}{s},
	\end{equation}
      where the energy functional $\E(t)$ is given in \eqref{defE} and $C(T) > 1$ is a constant only depending on $T$.
\end{proposition}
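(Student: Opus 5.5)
The plan is to add Corollary \ref{coro:ma g mix} to Lemma \ref{lem:mi xv} and then check, term by term, that the left-hand side controls every piece of $\E(t)$ as defined in \eqref{defE}.

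Corollary \ref{coro:ma g mix} already bounds, by $C(T)\E(0)+C(T)\varepsilon+C(T)\intt{\E(s)}{s}$, the following quantities:
\begin{itemize}
\item the macroscopic pieces $\Hxxx{(\phi,\psi,b)}^2$, $\Hxx{b}^2/\varepsilon$, $\varepsilon\Hxx{\pt\psi}^2$ and $\varepsilon\Hxxx{\nabla_x\psi}^2$;
\item the weighted high-order pieces $\varepsilon^2\HxxxLv{\nabla_xh}^2$ and $\varepsilon^3\Hthreex{\nabla_x^2u}^2$;
\item the microscopic pieces $\|\mi g\|_{H^2_xL^2_v}^2$ and $\varepsilon\|\mi g\|_{H^3_xL^2_v}^2$.
\end{itemize}
The only component of $\E(t)$ not yet covered is $\|\mi g\|_{H^2_xH^2_v}^2$.

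I split this norm by the velocity multi-index. The part with $\beta=0$ is $\|\mi g\|_{H^2_xL^2_v}^2$, which the corollary already controls. The parts with $0\le|\alpha|\le 2$ and $1\le|\beta|\le2$ are exactly the left-hand side of \eqref{eq:mi xv} in Lemma \ref{lem:mi xv}. That lemma bounds them by the same type of right-hand side, since its proof already absorbs the coupling terms $\varepsilon^{-1}\sum_{|\alpha|\le2}\Lv{\px^\alpha\mi g}^2$ and $\varepsilon\sum_{|\chi|\le3}\Lv{\px^\chi \mi g}^2$ via the time-integrated dissipation in \eqref{eq:ma g mix}.

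Adding the two estimates and dropping the nonnegative dissipation integrals on the left gives
\[
\E(t)\le C\big(C(T)\E(0)+C(T)\varepsilon+C(T)\intt{\E(s)}{s}\big).
\]
Renaming the constant yields \eqref{eq:total nergy}.

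The only point needing care is to check that the constants in the two results are mutually compatible and independent of $\varepsilon$ and $\eta$. Both results hold under the same a priori assumption \eqref{assumption} with the same choices \eqref{eq:eta} and \eqref{eq:def eta} of $\eta$ and $\delta$, and for $\varepsilon$ small. So no new smallness is required, and the combination is purely additive. I do not expect a genuine obstacle: the work has been done in the preceding lemmas, and the proposition is bookkeeping that matches the left-hand sides to the definition \eqref{defE}.
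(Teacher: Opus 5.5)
Your proposal is correct and is the paper's argument: the paper also obtains the proposition directly by combining Corollary \ref{coro:ma g mix} with Lemma \ref{lem:mi xv} and the definition \eqref{defE}. As you note, the corollary controls every piece of $\E(t)$ except the $v$-derivative part of $\|\mi g\|_{H^2_xH^2_v}^2$, the lemma controls exactly that part, and the dissipation integrals are then dropped.
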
	

%%%%%%%%%%%%%%%%%%%%%%%%%%%%

\subsection{Proof of main theorem}
\label{subsec:proof thm1}

Applying Gr\"onwall's inequality to \eqref{eq:total nergy}, we obtain
$${\E(t)\leq C(1+T)C(T)e^{C(T)}(\E(0)+\varepsilon)\leq C(T)(\E(0)+\varepsilon)},$$
for all $t\in  [0,T_1]$.
Under the conditions of Theorem \ref{thm1}, there exists $\v_0=\v_0(T)>0$ such that for all $0<\v \leq\v_0$, we have
\begin{equation} \label{eq:E}
\E(t)\leq C(T)(\E(0)+\varepsilon)\leq C(T)(C\v^\gamma+\varepsilon)\leq \frac{3}{2}\eta^2,
\end{equation}
for $t \in [0,T_1]$. Next, we will prove that \eqref{eq:E} holds for all $ t\in \big[0,\min\{T^\v_{max},T\}\big]$. We define
\begin{equation*}
    T^*=\sup\limits_{0\leq t<\min\{T^\v_{max},T\}}\left\{ t\,\big|\, \E(t)\leq 2{\eta^2}\right\}.
\end{equation*}
Noting that $\E(0)\leq C\v ^\gamma$ and recalling \eqref{eq:varepsilon}, i.e., $C\varepsilon^\gamma\leq \eta^2$, we have $T^*\in \big( 0,\min\{T^\v_{max},T\} \big]$. To prove by contradiction, we suppose $T^* <\min\{T^\v_{max},T\}$.
By the definition of $T^*$, we have $\E(t) \leq 2\eta^2$ for all $t \in [0, T^*]$ and particularly $\E(T^*) = 2\eta^2$. By further using \eqref{eq:E}, it actually yields $\E(t)\leq \frac{3}{2}\eta^2$ for all $t \in [0,T^*]$, which contradicts $\E(T^*) = 2\eta^2$.

Therefore, we must have $T^* = \min\{T^\varepsilon_{max},T\}$. Consequently, for all $t \in \big[0, \min\{T^\varepsilon_{max},T\} \big]$, \eqref{eq:E} holds.
Using standard continuity arguments, combined with \eqref{eq:regu limit}, the local existence result in Proposition \ref{prop:local_existence} and the uniform estimates \eqref{eq:E} on $t\in \big[ 0,\min\{T^\varepsilon_{max},T\} \big]$, we conclude that \eqref{eq:E} holds on $[0,T]$.

In view of the definition of $\E(t)$ given in \eqref{defE}, we further deduce that
	\begin{equation*}
	\Hxxx{a-\rho_0}^2+\Hxxx{b}^2+\Hxxx{u-u_0}^2+\Hxxvv{\mi g}^2\leq C(T)(\v^\gamma+\varepsilon).
	\end{equation*}	
		Recalling the decomposition \eqref{MM}, we have
		\begin{equation}\label{decayf}
		\begin{aligned}
		&|f^\v(t,x,v)-(1+\rho_0(t,x))M_{u_0}(t,x,v)|^2+|u^\v(t,x)-u_0(t,x)|^2\\[5pt]
		\le&|M_u-M_{u_0}|^2+|(a-\rho_0)M_{u_0}|^2+|a(M_u-M_{u_0})|^2+|b\cdot(v-u)M_u|^2\\[3pt]
		& \qquad \qquad \qquad \qquad\qquad\qquad\qquad\quad +|(\mathbf{I-P})g\M|^2+|u^\v(t,x)-u_0(t,x)|^2\\[5pt]
		\le&|M_u-M_{u_0}|^2+\Hxx{a-\rho_0}^2+\|b\|_{H^2_x}^2+\|(\mathbf{I-P})g\|^2_{H^2_x H^2_v}+\|u-u_0\|_{H^2_x}^2\\[5pt]
		\le&|M_u-M_{u_0}|^2+C(T)(\v^\gamma+\v).
		\end{aligned}
		\end{equation}
		
		Furthermore, in terms of $|M_u-M_{u_0}|$, we introduce an auxiliary function $G$
		\[
		G(s) := \frac1{(2\pi)^{3/2}}\exp\left\{-\frac{1}{2}\left|v-[u_0(t,x)+s(u(t,x)-u_0(t,x))]\right|^2 \right\},
		\]
		which satisfies $G(1)=M_u$ and $ G(0)=M_{u_0}$.
		By the Newton-Leibniz formula, we have
		\begin{equation}\label{M}
		\begin{aligned}
		\left| M_u - M_{u_0} \right|^2 = \left|G(1) - G(0)\right|^2 =&  \left| \int_{0}^{1}\dt{s}G(s) \,\mathrm{d}s \right|^2\\
		\le & \left(\int_{0}^{1} \left|  G(s)(v-[u_0+s(u-u_0)]) \cdot (u-u_0) \right| \,\mathrm{d}s\right)^2\\
		\le & \left(\int_{0}^{1} |(u-u_0)|\mathrm{d}s\right)^2\le |u-u_0|^2\le C(T)(\v^\gamma+\v),
		\end{aligned}
		\end{equation}
		where we use the fact that $|G(s)(v-[u_0+s(u-u_0)])|\leq C$ in the second inequality above.
		
		Finally, the proof of Theorem \ref{thm1} can be completed by substituting \eqref{M} into \eqref{decayf}.

\appendix

\section{Preliminary lemmas}

In this subsection, we present some preliminary lemmas and their proofs.

\begin{lemma}[\cite{Bobkov2009}, also Lemma 2.2 in \cite{liwangwang}]\label{borel}
	Assume that $\mathcal{B}$ is a Borel probability measure on $\mathbb{R}^n$ and $h$ is a bounded smooth function on $\mathbb{R}^n$. Then for any non-negative, Borel measurable function $\mathcal{W}$, the following weighted Poincar\'e - type inequality holds,
	\begin{equation*}\label{eq:2.18}
	\int |\nabla h|^2 \mathcal{W}^2 \,\d\mathcal{B} \ge \int h^2 \,\d\mathcal{B} - \left( \int h \,\d\mathcal{B} \right)^2.
	\end{equation*}
\end{lemma}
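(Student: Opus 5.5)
The plan is to read $\mathcal{W}$ as the weight adapted to $\mathcal{B}$, which is the form in which \cite{Bobkov2009} states the result. Taken literally, with $\mathcal{W}$ arbitrary, the inequality fails for $\mathcal{W}\equiv 0$. So I aim to show that for every Borel probability measure $\mathcal{B}$ one can choose a non-negative Borel weight $\mathcal{W}$ (depending only on $\mathcal{B}$) for which the bound holds for all bounded smooth $h$. I then check that in the application, the Gaussian $M_u\,\d v$, the weight can be taken constant: $\mathcal{W}\equiv 1$ by the Gaussian Poincaré inequality. The proof rests on two facts. First, the variance is a minimum: $\int h^2\,\d\mathcal{B}-(\int h\,\d\mathcal{B})^2=\min_{c}\int (h-c)^2\,\d\mathcal{B}$. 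Second, any pointwise bound of $h-c$ by a weighted integral of $\nabla h$ can be converted into the stated inequality by Cauchy--Schwarz.

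\textbf{One dimension.} Let $\mathcal{B}$ have distribution function $F$ and a median $m$, and take $c=h(m)$. For $x>m$, Cauchy--Schwarz gives
\begin{equation*}
|h(x)-h(m)|^2\le \int_m^x |h'(y)|^2\mathcal{W}(y)^2\,\omega(y)\,\d y\cdot\int_m^x \frac{\d y}{\mathcal{W}(y)^2\omega(y)},
\end{equation*}
where $\omega$ is a positive auxiliary density. I will integrate this in $\d\mathcal{B}(x)$ and use Fubini. This reduces the claim to a Muckenhoupt--Hardy condition: the weight must satisfy
\begin{equation*}
\mathcal{W}(y)^2\,\d\mathcal{B}(y)\;\ge\;\Big(\int_m^y\frac{\d s}{\mathcal{W}^2\omega}\Big)\,\mathcal{B}([y,\infty))\,\omega(y)\,\d y
\end{equation*}
on $(m,\infty)$, together with the symmetric condition on $(-\infty,m)$. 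I choose $\mathcal{W}^2$ so that this holds. On an atom or on the singular part of $\mathcal{B}$ I set $\mathcal{W}=+\infty$ or treat the piece separately, noting that $h$ is smooth so $h'$ is bounded there. On the absolutely continuous part I solve the resulting first-order inequality for $\mathcal{W}^2$ explicitly; one choice is $\mathcal{W}^2\sim \frac{F(1-F)}{p}\cdot(\text{log factor})$.

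\textbf{Higher dimension.} I pass to $\mathbb{R}^n$ through the Knothe--Rosenblatt triangular transport. Disintegrate $\mathcal{B}$ into a first marginal and successive conditional laws. Then apply the law of total variance,
\begin{equation*}
\mathrm{Var}_{\mathcal{B}}(h)=\mathbb{E}\big[\mathrm{Var}(h\mid x_1,\dots,x_{n-1})\big]+\mathrm{Var}\big(\mathbb{E}[h\mid x_1,\dots,x_{n-1}]\big),
\end{equation*}
and induct on $n$, using the one-dimensional weight for each conditional law. Derivatives of the conditional expectations with respect to the conditioning variables are absorbed by enlarging $\mathcal{W}$ at each step; the resulting weight is Borel measurable by construction. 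For the paper's use, I would add the direct Gaussian check: expand $h$ in Hermite polynomials to get $\mathrm{Var}_{M_u}(h)\le\int|\nabla_v h|^2 M_u\,\d v$, i.e. $\mathcal{W}\equiv1$.

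\textbf{Main obstacle.} I expect the hardest step to be the $n$-dimensional induction. Differentiating $\mathbb{E}[h\mid x']$ with respect to $x'$ produces terms involving derivatives of the conditional densities, and these are uncontrolled for a general Borel measure. This is why a weight depending on $\mathcal{B}$ is unavoidable. My first attempt is to keep all of that dependence inside $\mathcal{W}$, allowing $\mathcal{W}=+\infty$ on a $\mathcal{B}$-null or bad set. If that fails, I will restrict to measures with a locally Lipschitz positive density, which covers $M_u$.
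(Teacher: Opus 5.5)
You are right that the statement cannot be proved as printed: for $\mathcal{W}\equiv 0$ the left side vanishes while the variance need not. The paper gives no argument of its own, only the two citations. It uses the lemma exactly once, in Lemma~\ref{lemma A2}, with $\mathrm{d}\mathcal{B}=M_u\,\mathrm{d}v$ and $\mathcal{W}\equiv 1$. For that case your Hermite expansion (after the shift $v\mapsto v-u$) is a complete proof of the Gaussian Poincar\'e inequality. It also applies directly to $h\in H^1(M_u\,\mathrm{d}v)$, which is what Lemma~\ref{lemma A2} actually needs, since $h=(\mathbf{I-P})g/\sqrt{M_u}$ is in general not bounded.

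The genuine gap is the general statement you substitute, because it is false: not every Borel probability measure admits a weight making the inequality hold. Take $\mathcal{B}=\tfrac12(\delta_0+\delta_1)$ on $\mathbb{R}$, or even the absolutely continuous uniform measure on $[0,1]\cup[2,3]$. Let $h$ be bounded and smooth, equal to $0$ near the first piece and to $1$ near the second. Then $\nabla h=0$ $\mathcal{B}$-a.e., so the left side is $0$ for every $\mathcal{W}$ (with $0\cdot\infty=0$), while the variance is $1/4$.

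Consequences for your proof:
\begin{itemize}
\item Setting $\mathcal{W}=+\infty$ on atoms cannot help. The obstruction is that $\nabla h$ may vanish on the support, not that it is large.
\item For the same reason, your one-dimensional Hardy step closes only when $\mathcal{B}$ has a positive density on the whole interval between the median and each point of the support. The Cauchy--Schwarz bound is taken against $\omega\,\mathrm{d}y$ and has to be dominated by $\mathcal{W}^2\,\mathrm{d}\mathcal{B}$.
\item The $n$-dimensional induction is not carried out. Differentiating $\mathbb{E}[h\mid x']$ produces the term $\int (h-\mathbb{E}[h\mid x'])\,\nabla_{x'}p(x_n\mid x')\,\mathrm{d}x_n$. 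This can be absorbed, via the conditional Poincar\'e inequality, only if the conditional densities are positive and regular enough to have finite Fisher information in $x'$.
\end{itemize}

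If you want a general version, restrict to positive, locally Lipschitz densities and use the transport map directly rather than an induction. If $\mathcal{B}=T_\#\gamma$ for a locally Lipschitz bijection $T$ and the standard Gaussian $\gamma$, then the Gaussian Poincar\'e inequality for $h\circ T$ gives the claim with $\mathcal{W}=\|DT\|\circ T^{-1}$. For this paper, though, the right fix is to state the lemma as the Gaussian Poincar\'e inequality for $M_u\,\mathrm{d}v$ with $\mathcal{W}\equiv 1$ and prove it by your Hermite argument.
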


The following lemma is to show the dissipative property of the linearized Fokker--Planck operator $\mathbf{L}$ in $\eqref{sourse}_1$ around the local Maxwellian $M_u$ defined in \eqref{Mlocal}.
\begin{lemma}\label{lemma A2}
	For any $u\in\R^3$, the linearized operator $\mathbf{L}$ defined in $\eqref{sourse}_1$  has the dissipative property, i.e., if $\intxv {g\mathbf{L}g}$ is well-defined, then there exists a constant $c>0$ independent of $u\in\R^3$ and $g$ such that
	\begin{equation*}
	c\|(\mathbf{I-P})g\|_{\nu}^2+\Lx{b}^2 \leq \intxv {g\mathbf{L}g},
	\end{equation*}
    where $b$ and $\mi g$ are defined by \eqref{eq:def-a-b} and \eqref{decom}.
\end{lemma}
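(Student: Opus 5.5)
The plan is to compute $\intxv{g\mathbf{L}g}$ explicitly and reduce everything to the Gaussian Poincaré inequality of Lemma \ref{borel}, applied pointwise in $x$. The estimate is local in $x$, so it suffices to fix $x$ (hence $u=u(t,x)$) and prove the $v$-inequality with a constant independent of $u$, then integrate in $x$.

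\textbf{Step 1: Dirichlet form.} Write $g=\M\,w$ with $w=g/\M$. By the definition of $\mathbf{L}$ in \eqref{sourse} and one integration by parts in $v$,
\[
\intv{g\mathbf{L}g}=-\intv{w\,\mathrm{div}_v\big(M_u\n_v w\big)}=\intv{|\n_v w|^2M_u}.
\]
The change of variables $v\mapsto v-u$ makes this a Dirichlet form for the standard Gaussian. Every quantity involved ($a$, $b$, $\mathbf P$, $\|\cdot\|_\nu$ with weight $1+|v-u|^2$) is translation covariant, so every constant produced below is independent of $u$.

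\textbf{Step 2: Spectral gap, taking $\mathbf P$ into account.} Decompose $w=a+b\cdot(v-u)+w^\perp$ with $w^\perp=\mi g/\M$. The three pieces are orthogonal in $L^2(M_u)$, and $w^\perp$ is orthogonal to $1$ and to $v-u$. Set $\mathcal B=M_u\,\d v$ and $\mathcal W=1$.
\begin{itemize}
\item The $\n_v$-images are also orthogonal: $\n_v(b\cdot(v-u))=b$ is constant, and $\intv{\n_v w^\perp M_u}=\intv{(v-u)w^\perp M_u}=0$. Hence
\[
\intv{|\n_v w|^2M_u}=|b|^2+\intv{|\n_v w^\perp|^2M_u}.
\]
\item Lemma \ref{borel} applied to $w^\perp$, which has mean zero, gives $\intv{|\n_v w^\perp|^2M_u}\ge \intv{|w^\perp|^2M_u}=\Ltwov{\mi g}^2$.
\end{itemize}
Note that $\Ltwov{b\cdot(v-u)\M}^2=|b|^2$, which is why $\Lx{b}^2$ appears on the left-hand side with coefficient $1$.

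\textbf{Step 3: Upgrade to the $\nu$-norm.} This is the main obstacle, since we need control of $\intv{|v-u|^2|\mi g|^2}$ and $\intv{|\n_v\mi g|^2}$, not just the $L^2$ norm. Write $\mathbf{L}=-\Delta_v+\frac{|v-u|^2}{4}-\frac32$, as recorded before \eqref{Lx gamma}. Since $\mathbf{L}$ annihilates $\M$ and maps $(v-u)\M$ to itself, the cross terms vanish and
\[
\intv{g\mathbf{L}g}=|b|^2+\intv{\mi g\,\mathbf{L}\mi g}.
\]
For $q=\mi g$ this gives
\[
\intv{q\mathbf{L}q}=\Ltwov{\n_v q}^2+\tfrac14\Ltwov{|v-u|q}^2-\tfrac32\Ltwov{q}^2 .
\]
Combining $\intv{q\mathbf{L}q}\ge \Ltwov{q}^2$ from Step 2 with this identity, take a convex combination: $(1-\theta)$ times the identity plus $\theta$ times the lower bound. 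For small $\theta$ this yields
\[
\intv{q\mathbf{L}q}\ge c\big(\Ltwov{\n_vq}^2+\Ltwov{|v-u|q}^2+\Ltwov{q}^2\big),
\]
with $c$ depending only on $\theta$. This is the $\nu$-norm bound.

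\textbf{Step 4: Conclusion.} Integrating over $x$ gives
\[
c\|\mi g\|_\nu^2+\Lx{b}^2\le\intxv{g\mathbf{L}g}.
\]
Justification of the integrations by parts only needs the assumption that $\intxv{g\mathbf{L}g}$ is well defined; a density argument with compactly supported $w$ handles this.
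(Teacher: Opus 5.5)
You follow the paper's proof almost step for step. The ingredients are the same: the Dirichlet form $\intv{|\n_v w|^2M_u}$ (the paper writes it as $\intv{|\n_v g+\frac12(v-u)g|^2}$), the Gaussian Poincar\'e inequality of Lemma \ref{borel} applied to $\mi g/\M$, the explicit form $\mathbf{L}=-\Delta_v+\frac{|v-u|^2}{4}-\frac32$, and the splitting $\intv{g\mathbf{L}g}=|b|^2+\intv{\mi g\,\mathbf{L}\mi g}$.

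Step 3 fails as written, because your weights are the wrong way round. Putting $(1-\theta)$ on the identity and $\theta$ on the bound $\intv{q\mathbf{L}q}\ge\Ltwov{q}^2$ gives
\[
\intv{q\mathbf{L}q}\ \ge\ (1-\theta)\Ltwov{\n_vq}^2+\frac{1-\theta}{4}\Ltwov{|v-u|q}^2+\Big(\frac52\theta-\frac32\Big)\Ltwov{q}^2 .
\]
For small $\theta$ the last coefficient is close to $-\frac32$. So the claimed lower bound $c\big(\Ltwov{\n_vq}^2+\Ltwov{|v-u|q}^2+\Ltwov{q}^2\big)$ does not follow from that combination. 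You need $\theta\in(\frac35,1)$, that is, most of the weight on the spectral-gap bound. With $\theta=\frac23$ you get $\frac13\Ltwov{\n_vq}^2+\frac1{12}\Ltwov{|v-u|q}^2+\frac16\Ltwov{q}^2$, hence $c=\frac1{12}$. This is exactly the paper's choice: the identity plus twice the Poincar\'e bound, divided by $3$.

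With that one-line correction, the rest of your argument is fine: the orthogonality of the $\n_v$-images of the three pieces, the independence of $c$ from $u$ by translation, and the integration in $x$.
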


\begin{proof}
	By a direct computation, we have
	\begin{equation*}
	\begin{aligned}
	\intv{g \mathbf{L} g}=& -\intv{\frac{g}{\M}\div_v\left( \M \nabla_v\left( \frac{g}{\M}\right)\right)}\\
	=& \intv{\left|\nabla_vg+\frac{1}{2}(v-u)g\right|^2}\\
	=& \intv{\left(\left| \nabla_v g \right|^2+\frac{|v-u|^2}{4}g^2-\frac{3}{2}g^2\right)},
	\end{aligned}
	\end{equation*}
	and replacing $ g$ by $(\mathbf{I-P})g$ yields
	\begin{equation}\label{:eq:diss1}
	\intv{(\mathbf{I-P})g\, \mathbf{L(\mathbf{I-P})g}}=\intv{\left(|\nabla_v(\mathbf{I-P})g|^2+\left(\frac{|v-u|^2}{4}-\frac{3}{2}\right)|(\mathbf{I-P})g|^2\right)}.
	\end{equation}
	By considering the Borel probability measure $\d\mathcal{B}=M_u\, \d v $ and weighted function $\mathcal{W}=1$ in Lemma \ref{borel}, and setting $ h=\dfrac{(\mathbf{I-P})g}{\M}$, we have
	\begin{equation}\label{eq:diss2}
	\begin{aligned}
	\intv{(\mathbf{I-P})g\, \mathbf{L(\mathbf{I-P})g}} =& \intv{\left|\nabla_v(\mathbf{I-P})g+\frac{1}{2}(v-u)(\mathbf{I-P})g\right|^2} \\
	\ge& \intv{|(\mathbf{I-P})g|^2}.
	\end{aligned}
	\end{equation}
	Applying $\eqref{:eq:diss1} + \eqref{eq:diss2} \times 2 $ yields
	\begin{equation}\label{:eq:diss (I-P)}
	\begin{aligned}
	\intv{(\mathbf{I-P})g\, \mathbf{L(\mathbf{I-P})g}}\ge& \frac{1}{12}\intv{\left(|\nabla_v(\mathbf{I-P})g|^2+\left(|v-u|^2+1\right)|(\mathbf{I-P})g|^2\right)}.
	\end{aligned}
	\end{equation}
	By further substituting the decomposition \eqref{decom} of $g$, we find
	\begin{equation*}
	\begin{aligned}
	\intv{g\mathbf{L}g}&=\intv{\left(a\M +b\cdot(v-u)\M+(\mathbf{I-P})g\right)\left(b\cdot(v-u)\M+\mathbf{L}(\mathbf{I-P}g)\right)}\\
	&=|b|^2+\intv{(\mathbf{I-P})g\, \mathbf{L(\mathbf{I-P})g}}.
	\end{aligned}
	\end{equation*}
	Integrating over $x \in \R^3 $ and combining \eqref{:eq:diss (I-P)}, we obtain
	\begin{equation*}
	\frac{1}{12}\|(\mathbf{I-P})g\|_{\nu}^2+\Lx{b}^2 \leq \intxv {g\mathbf{L}g}.
	\end{equation*}
	This completes the proof.
	
\end{proof}

\begin{lemma}\label{lem:property mi}
Under the conditions of Theorem \ref{thm1} and the \textit{a priori} assumption \eqref{assumption}, for $t\in [0,T_1] $, we have
	\begin{equation}\label{eq:xv mi}
	\begin{aligned}
	\|\p^\a_x\p^\b_v(\mathbf{I-P})g\|_\nu^2\leq C\|(\mathbf{I-P})\p^\a_x\p^\b_v(\mathbf{I-P})g\|_\nu^2+C(T)\|(\mathbf{I-P})g\|_{H^2_x L^2_v}^2,
	\end{aligned}
	\end{equation}
	where $0\leq |\a|\leq 2$ and $1\leq |\b|\leq 2$, and
	\begin{equation}\label{eq:x mi}
	\begin{aligned}
	\|\n^k_x(\mathbf{I-P})g\|_\nu^2\leq C\|(\mathbf{I-P})\n^k_x(\mathbf{I-P})g\|_\nu^2+C(T)\sum\limits_{|\a|\leq k-1}\|\p_x^\a(\mathbf{I-P})g\|_\nu^2,
	\end{aligned}
	\end{equation}
	for $k=1,2,3$.
\end{lemma}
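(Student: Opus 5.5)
The plan is to commute $\p^\a_x\p^\b_v$ through the macroscopic projection and use the $v$-independent structure of $\mathbf P$. Set $w:=\p^\a_x\p^\b_v(\mathbf{I-P})g$ and split it as $w=(\mathbf{I-P})w+\mathbf P w$. The microscopic piece is already the first term on the right-hand side, so the whole task is to show
\[
\|\mathbf P w\|_\nu^2\le C(T)\|(\mathbf{I-P})g\|_{H^2_xL^2_v}^2 .
\]
Since $\mathbf P w = a_w\M+b_w\cdot(v-u)\M$ and the $v$-weights of $\|\cdot\|_\nu$ are harmless against Gaussians, we have $\|\mathbf P w\|_\nu^2\le C(\Lx{a_w}^2+\Lx{b_w}^2)$. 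Here
\[
a_w=\intv{\p^\a_x\p^\b_v(\mathbf{I-P})g\,\M},\qquad b_w=\intv{\p^\a_x\p^\b_v(\mathbf{I-P})g\,(v-u)\M}.
\]

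\textbf{Velocity derivatives.} First integrate by parts in $v$, moving $\p^\b_v$ onto $\M$ or $(v-u)\M$. This produces $\intv{\p^\a_x(\mathbf{I-P})g\,Q_\b(v-u)\M}$ with $Q_\b$ a polynomial of degree at most $3$. Since $\M$ depends on $x$ through $u$, the $x$-derivatives should be pulled out by the Leibniz rule, written as $\p^\a_x\intv{(\mathbf{I-P})g\,Q_\b\M}$ minus commutator terms. Each commutator contains at least one factor $\p^{\gamma}_xu$ with $|\gamma|\ge1$, and these are bounded in $L^\infty_x$ or $L^4_x$ through $\Hxxx{u}\le\Hxxx{\psi}+\Hxxx{u_0}\le C(T)$ by \eqref{assumption} and \eqref{eq:regu limit}. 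Schematically,
\[
\Lx{a_w}+\Lx{b_w}\le C(T)\sum_{|\a'|\le|\a|}\Ltwoxv{\p^{\a'}_x(\mathbf{I-P})g}.
\]
Because $|\a|\le2$, this is at most $C(T)\|(\mathbf{I-P})g\|_{H^2_xL^2_v}$. The $u$-dependent polynomials and Gaussians are absorbed by Cauchy--Schwarz in $v$ as $\|Q\M\|_{L^2_v}\le C$.

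\textbf{Pure spatial derivatives.} The same computation with $\b=0$ handles \eqref{eq:x mi}. Here one uses that $\intv{(\mathbf{I-P})g\,\M}=\intv{(\mathbf{I-P})g\,(v-u)\M}=0$ pointwise in $x$. Differentiating these identities $k$ times in $x$ expresses $a_w$ and $b_w$ as sums of terms $\p^\gamma_x(\text{Gaussian})$ paired against $\p^{\a-\gamma}_x(\mathbf{I-P})g$ with $|\gamma|\ge1$, so only derivatives of order at most $k-1$ of $(\mathbf{I-P})g$ appear. These are bounded by $C(T)\sum_{|\a|\le k-1}\Lv{\p^\a_x(\mathbf{I-P})g}^2$, since the $L^2$ norm is dominated by $\|\cdot\|_\nu$.

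\textbf{Main obstacle.} The delicate case is $k=3$, where $\p^\gamma_x\M$ involves $\nabla_x^3u$. This factor is not in $L^\infty_x$ from $H^3_x$ control alone. When $|\gamma|=3$, I would place $\nabla_x^3u\in L^2_x$ and put $(\mathbf{I-P})g$ in $L^\infty_xL^2_v$, which is controlled by the $H^2_xL^2_v$ norm via Sobolev embedding. Intermediate $\gamma$ are handled by $L^4$--$L^4$ pairings. In all cases, $\Hxxx{u}\le C(T)$ supplies the constant $C(T)$.
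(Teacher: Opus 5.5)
Your proposal is correct and follows essentially the paper's argument: split $\p^\a_x\p^\b_v(\mathbf{I-P})g$ into its microscopic and macroscopic parts, integrate by parts in $v$ for the mixed derivatives, and differentiate the orthogonality relations $\int(\mathbf{I-P})g\M\,\d v=\int(\mathbf{I-P})g\,(v-u)\M\,\d v=0$ for the pure spatial ones. Your Hölder/Sobolev pairing for $k=3$ supplies the detail the paper dismisses as ``similar''; note that the same pairing is also needed for the $|\gamma|=2$ term when $k=2$, since $\nabla_x^2\psi$ is only uniformly in $H^1_x$. The one superfluous step is the Leibniz/commutator detour in the mixed case: once $\p^\b_v$ is moved onto the Gaussian, Cauchy--Schwarz in $v$ already gives $|a_w|+|b_w|\le C\|\p^\a_x(\mathbf{I-P})g\|_{L^2_v}$ pointwise in $x$, with a $T$-independent constant and no derivatives of $u$, which is what the paper does.
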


\begin{proof}
	For the mixed derivatives $\p^\a_x\p^\b_v(\mathbf{I-P})g$ with $0\leq |\a|\leq 2$ and $1\leq |\b|\leq 2$, expanding the projection yields
	\begin{equation*}
	\begin{aligned}
	(\mathbf{I-P})\p^\a_{x}\p^\b_v(\mathbf{I-P})g=&\p^\a_{x}\p^\b_v(\mathbf{I-P})g+(-1)^{|\b|}\intv{\p^\b_v(\M)\p^\a_x(\mathbf{I-P})g}\M\\
	&+(-1)^{|\b|}\intv{\p^\b_v\big[(v-u)\M\big]\p^\a_x(\mathbf{I-P})g}\cdot (v-u)\M,
	\end{aligned}
	\end{equation*}
	which implies
	\begin{equation*}
	\begin{aligned}
	\|\p^\a_x\p^\b_v(\mathbf{I-P})g\|_\nu^2\leq C\|(\mathbf{I-P})\p^\a_x\p^\b_v(\mathbf{I-P})g\|_\nu^2+C\|(\mathbf{I-P})g\|_{H^2_x L^2_v}^2.
	\end{aligned}
	\end{equation*}
	
	Regarding the first-order spatial derivatives $\n_x(\mathbf{I-P})g$, we have
	\begin{equation*}
	\begin{aligned}
	&(\mathbf{I-P})\p_{x_i}(\mathbf{I-P})g\\[5pt]
	=&\p_{x_i}(\mathbf{I-P})g + \frac{1}{2} \left( \intv{(v-u)(\mathbf{I-P})g(v-u) \cdot u_{x_i}\M}  \right)  \cdot (v-u)\M,
	\end{aligned}
	\end{equation*}
	which leads to the following estimate:
	\begin{equation*}
	\begin{aligned}
	\|\n_x(\mathbf{I-P})g\|_\nu^2\leq C\|(\mathbf{I-P})\n_x(\mathbf{I-P})g\|_\nu^2+C(T)\|(\mathbf{I-P})g\|_\nu^2.
	\end{aligned}
	\end{equation*}
	
	For the second-order spatial derivatives $\n^2_x(\mathbf{I-P})g$, an analogous expansion gives
	\begin{equation*}
	\begin{aligned}
	&(\mathbf{I-P})\p_{x_ix_j}(\mathbf{I-P})g\\[5pt]
	=&\p_{x_ix_j}(\mathbf{I-P})g+\frac{1}{2}\intv{\p_{x_i}(\mathbf{I-P})g(v-u)\cdot \p_{x_j}u\M}\M\\
	&+\frac{1}{2}\intv{\p_{x_j}(\mathbf{I-P})g(v-u)\cdot \p_{x_i}u\M}\M+\frac{1}{2}\intv{(\mathbf{I-P})g\p_{x_ix_j}\M}\M\\
	&+\frac{1}{2}\intv{\p_{x_i}(\mathbf{I-P})g\p_{x_j}\big[(v-u)\M\big]}\cdot(v-u)\M\\
	&+\frac{1}{2}\intv{\p_{x_j}(\mathbf{I-P})g\p_{x_i}\big[(v-u)\M\big]}\cdot(v-u)\M\\
	&+\frac{1}{2}\intv{(\mathbf{I-P})g\p_{x_ix_j}\big[(v-u)\M\big]}\cdot(v-u)\M,
	\end{aligned}
	\end{equation*}
	which yields the bound:
	\begin{equation*}
	\begin{aligned}
	\|\n^2_x(\mathbf{I-P})g\|_\nu^2\leq C\|(\mathbf{I-P})\n^2_x(\mathbf{I-P})g\|_\nu^2+C(T)\sum\limits_{|\a|\leq 1}\|\p_x^\a(\mathbf{I-P})g\|_\nu^2.
	\end{aligned}
	\end{equation*}
	
	By a similar argument, for the third-order derivatives, we obtain
	\begin{equation*}
	\begin{aligned}
	\|\n^3_x(\mathbf{I-P})g\|_\nu^2\leq C\|(\mathbf{I-P})\n^3_x(\mathbf{I-P})g\|_\nu^2+C(T)\sum\limits_{|\a|\leq 2}\|\p_x^\a(\mathbf{I-P})g\|_\nu^2.
	\end{aligned}
	\end{equation*}
	This completes the proof.
\end{proof}

\begin{lemma}\label{lem:hx}
	Under the conditions of Theorem \ref{thm1} and the \textit{a priori} assumption \eqref{assumption}, for $t\in [0,T_1] $, we have, for $1\leq |\a|\leq 3$,
	\begin{equation*}
	\begin{aligned}
	&\varepsilon\Lxv{\px^\a h}^2\leq C(T)\varepsilon+C\E(t).
	\end{aligned}
	\end{equation*}
\end{lemma}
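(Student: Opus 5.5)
The plan is to use the macro--micro decomposition \eqref{eq:relation-g-h},
\[
h=\rho\sqrt{M_u}+b\cdot(v-u)\sqrt{M_u}+(\mathbf{I-P})g,\qquad \rho=a+1=\phi+\rho_0+1,
\]
and to differentiate it term by term in $x$. For $1\leq|\alpha|\leq 3$, Leibniz' rule gives
\[
\partial_x^\alpha h=\sum_{\beta\leq\alpha}C_\alpha^\beta\Big(\partial_x^{\alpha-\beta}\rho\,\partial_x^\beta\sqrt{M_u}+\partial_x^{\alpha-\beta}b\cdot\partial_x^\beta\big[(v-u)\sqrt{M_u}\big]\Big)+\partial_x^\alpha(\mathbf{I-P})g .
\]
Each $\partial_x^\beta\sqrt{M_u}$ and $\partial_x^\beta[(v-u)\sqrt{M_u}]$ is a polynomial in $v-u$ of degree at most $|\beta|+1$. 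Its coefficients are products of derivatives $\partial_x^{\gamma}u$ with $\sum|\gamma|=|\beta|$, and the whole expression is multiplied by $\sqrt{M_u}$. Hence its $L^2_v$ norm is bounded by a polynomial in $\{|\partial_x^\gamma u|\}_{|\gamma|\leq|\beta|}$, uniformly in $u$.

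\textbf{Step 1: the microscopic part.} This is immediate. Since $\varepsilon\|\partial_x^\alpha(\mathbf{I-P})g\|^2_{L^2_{x,v}}\leq\|(\mathbf{I-P})g\|^2_{H^3_xL^2_v}$, it is bounded by $\E(t)$. The $|\alpha|=3$ case is exactly $\varepsilon\|(\mathbf{I-P})g\|^2_{H^3_xL^2_v}$, and the lower orders come from $\|(\mathbf{I-P})g\|^2_{H^2_xH^2_v}$.

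\textbf{Step 2: the $b$-part.} Here I would put the factor $\partial_x^{\alpha-\beta}b$ in $L^2_x$ or $L^\infty_x$, with $\|b\|_{H^3_x}^2\leq\E(t)$. The $u$-derivatives go in $L^\infty_x$ or $L^4_x$. Because $u=\psi+u_0$, we have $\|u\|_{H^3_x}\leq C(T)+C\sqrt{\E}$. In the top-order case $|\beta|=3$, the worst factor is $\nabla^3_x u\in L^2_x$, paired with $b\in L^\infty_x$. So this part is bounded by $C(T)\E(t)$, using $\E\leq 2\eta^2$ to absorb the higher powers.

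\textbf{Step 3: the $\rho$-part, which contains the large background.} Since $\rho_0$ and $u_0$ are not small, I write $\rho=\phi+(\rho_0+1)$. The $\phi$-contributions are handled as in Step 2 and give $C(T)\E(t)$. The $\rho_0$-contributions, including the $\beta=0$ term $\partial_x^\alpha\rho_0\sqrt{M_u}$, are bounded by $C(T)$ through \eqref{eq:regu limit}: there $\|\rho_0\|_{H^6_x}\leq C(T)$, and $\rho_0+1$ is bounded in $L^\infty$. The corresponding $u$-derivative products are controlled by $\|u_0\|_{H^4_x}^k+\|\psi\|^k_{H^3_x}\leq C(T)$.

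Multiplying all three parts by $\varepsilon$ gives $C(T)\varepsilon+C\varepsilon\E(t)\leq C(T)\varepsilon+C\E(t)$. The only point requiring care is that $\partial_x^\alpha\rho$ multiplies $\sqrt{M_u}$ without any smallness, so this part genuinely contributes only $O(1)$. That is acceptable precisely because of the $\varepsilon$-weight, which turns it into the $C(T)\varepsilon$ term. This limitation is also why the lemma is stated only for $|\alpha|\leq 3$: at $|\alpha|=3$ we need $\nabla^3_x u\in L^2$, and $\rho_0\in H^3$ already suffices.
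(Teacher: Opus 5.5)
Your proposal is correct and follows the paper's proof: the paper also writes $h=(\phi+\rho_0+1)\sqrt{M_u}+b\cdot(v-u)\sqrt{M_u}+(\mathbf{I-P})h$ and differentiates each piece. It bounds the macroscopic pieces by polynomials in $\|\psi\|_{H^3_x},\|\rho_0\|_{H^3_x},\|u_0\|_{H^3_x},\|b\|_{H^3_x}$, which the $\varepsilon$-weight turns into $C(T)\varepsilon$, and bounds the microscopic piece directly by $\E(t)$. One small slip: in Step 1 the intermediate bound should be $\varepsilon\|(\mathbf{I-P})g\|^2_{H^3_xL^2_v}$, since only this weighted quantity belongs to $\E(t)$; also, the $\varepsilon C(T)\E(t)$ terms from your Steps 2--3 are best absorbed into $C(T)\varepsilon$ using $\E\leq 2\eta^2$, exactly as the paper does.
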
	

\begin{proof}
	For $1\leq |\a|\leq 3$, substituting the macroscopic-microscopic decomposition of $h$ yields
	\begin{equation*}
	\begin{aligned}
	&\varepsilon\Lxv{\px^\a h}^2\\
	=&\varepsilon\Lxv{\px^\a(\rho\M+b\cdot(v-u)\M+\mi h)}^2\\[5pt]
	\leq&\varepsilon\Lxv{\px^\a((\phi+\rho_0+1)\M)}^2+\varepsilon\Lxv{\px^\a(b\cdot(v-u)\M)}^2+\varepsilon\Lxv{\px^\a(\mi h)}^2\\[5pt]
	\leq&C\varepsilon(1+\Hxxx{\psi}+\Hxxx{\rho_0})(1+\Hxxx{\psi}^3+\Hxxx{u_0}^2)+C\varepsilon\Lxv{\px^\a(\mi h)}^2\\[5pt]
	&+C\varepsilon(\Hxxx{b}+1)(\Hxxx{\psi}^3+\Hxxx{u_0}^3+1)\\[5pt]
	\leq&C(T)\varepsilon+C\E(t).
	\end{aligned}
	\end{equation*}
		This completes the proof.
\end{proof}

\begin{lemma}\label{lem:b-b_a}		Under the conditions of Theorem \ref{thm1} and the \textit{a priori} assumption \eqref{assumption}, for $t\in [0,T_1] $, we have, for $1\leq |\a|\leq 4$:
	\begin{equation*}
	\varepsilon\Lx{\px^\a b-b_\a}^2\leq C(T)\varepsilon+C(T)\E(t).
	\end{equation*}
\end{lemma}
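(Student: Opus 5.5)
Since $b=\int h(v-u)\M\,\d v$, differentiating under the integral and subtracting $b_\a=\int \px^\a h\,(v-u)\M\,\d v$ leaves only the terms in which at least one derivative falls on the weight $(v-u)\M$. By the Leibniz rule,
\[
\px^\a b-b_\a=\sum_{\b\leq\a,\,|\b|\neq0}C^\b_\a\int \px^{\a-\b}h\,\px^\b\big[(v-u)\M\big]\,\d v .
\]
For $|\b|\ge1$, $\px^\b[(v-u)\M]$ is a finite sum of products $\prod_j\px^{\g_j}u$ with $\sum_j|\g_j|=|\b|$, $|\g_j|\ge1$. Each such product multiplies a polynomial in $(v-u)$ times $\M$, which is uniformly bounded in $L^2_v$ independently of $u$.

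\emph{Step 1: reduce to a $v$-integral estimate.} Cauchy--Schwarz in $v$ gives, pointwise in $x$,
\[
\Big|\int \px^{\a-\b}h\,\px^\b[(v-u)\M]\,\d v\Big|\le C\sum\prod_j|\px^{\g_j}u|\;\Ltwov{\px^{\a-\b}h}.
\]
Here $|\a-\b|\le3$. When $\a-\b=0$, I use the decomposition $h=\rho\M+b\cdot(v-u)\M+\mi h$ directly, which gives $\Ltwov{h}\le C(|\rho|+|b|)+\Ltwov{\mi g}$, bounded in $L^\infty_x$.

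\emph{Step 2: distribute derivatives via Sobolev.} Write $u=\psi+u_0$. By \eqref{eq:regu limit} and \eqref{assumption}, $\Hthreex{\nabla_x u}\le C(T)$, and the extra factor $\v$ in front gives control of $\v^{3}\Hthreex{\nabla_x^2u}^2\le\E$ only at top order. The cases are as follows.
\begin{itemize}
\item If all $|\g_j|\le2$ and $|\a-\b|\le 3$, put the $u$-factors in $L^\infty_x$ or $L^4_x$ (bounded by $C(T)$ through $H^3_x$ and the bound $\Hxxx{\psi}\le C\eta$). Put $\px^{\a-\b}h$ in $L^2_{x,v}$ or $L^4_xL^2_v$. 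Lemma \ref{lem:hx} then gives $\v\Lxv{\px^\theta h}^2\le C(T)\v+C\E$ for $1\le|\theta|\le3$, and for $\theta=0$ one uses the macro--micro split above together with $\rho=\phi+\rho_0+1$.
\item The worst term is $|\b|=|\a|=4$, i.e.\ $\int h\,\px^\a[(v-u)\M]\,\d v$, which contains $\px^\a u$ times $\int h\,\M\,\d v=\rho$ up to constants, with $\px^\a u=\px^\a\psi+\px^\a u_0$. The $u_0$ part is bounded by $C(T)$. For the $\psi$ part, $\v\Ltwox{\rho\,\px^\a\psi}^2\le C\v\Ltwox{\nabla_x^4\psi}^2\le C\E$, since $\E$ contains $\v\Hxxx{\nabla_x\psi}^2$.
\item The terms with one factor $\px^{\g}u$, $|\g|=3$, times $\nabla_x h$ are handled with $L^2_x$ on $\nabla^3u$ (bounded by $C(T)$) and $L^\infty_xL^2_v$ on $\nabla_xh$. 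Here $\v\|\nabla_xh\|_{L^\infty_xL^2_v}^2\le \v\|\nabla_xh\|_{H^2_xL^2_v}^2\le C(T)\v+C\E$ by Lemma \ref{lem:hx}.
\end{itemize}

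\emph{Step 3: collect.} Summing all cases gives $\v\Lx{\px^\a b-b_\a}^2\le C(T)\v+C(T)\E(t)$.

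The main obstacle is the bookkeeping in Step 2, namely ensuring no term needs more than three $x$-derivatives on $h$ or more than four on $\psi$ beyond what $\E$ (with its $\v$-weights) controls. The top-order $\psi$-term is the delicate one, and it is exactly absorbed by $\v\Hxxx{\nabla_x\psi}^2$ in $\E$.
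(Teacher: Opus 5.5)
Your proposal is correct and follows essentially the same route as the paper: the Leibniz rule isolates the terms where at least one derivative falls on $(v-u)\sqrt{M_u}$, followed by Cauchy--Schwarz in $v$, Sobolev embedding together with Lemma~\ref{lem:hx} for $\partial_x^\theta h$ ($1\le|\theta|\le 3$), the uniform bound on $\|h\|_{L^\infty_xL^2_v}$, and the weighted control $\varepsilon\|\nabla_x^4\psi\|_{L^2_x}^2\le \mathbb{E}(t)$ for the top-order term. One slip: your aside that $\|\nabla_x u\|_{H^3_x}\le C(T)$ is false, because fourth derivatives of $\psi$ are only controlled with the weight $\varepsilon$; however, you never actually use this claim, since you treat the $|\beta|=|\alpha|=4$ term through $\varepsilon\|\nabla_x^4\psi\|_{L^2_x}^2$.
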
	

\begin{proof}
	Recalling the definitions
	\begin{equation*}
	b_\a=\int \px^\a h (v-u) \M \d v,\ b=\int h(v-u)\M \d v,
	\end{equation*}
	we apply the Leibniz rule and Lemma \ref{lem:hx} to deduce
	\begin{equation*}
	\begin{aligned}
	&\varepsilon\Lx{\px^\a b-b_\a}^2\\
	\leq& C \sum\limits_{ 1\leq|\b|\leq|\a|-1}\varepsilon\left\|\int \px^\b ((v-u)\M)\px^{\a-\b}h \d v \right\|^2_{L^2_x}+\varepsilon\left\|\int \px^\a ((v-u)\M)\,h \d v \right\|^2_{L^2_x}\\[5pt]
	\leq&\varepsilon \big( \Hxxx{u_0}^4+\Hxxx{\psi}^4+1 \big) \big(\Hxxv{\nabla_x h}^2+\|h\|^2_{L^\infty_xL^2_v}\big) + C\varepsilon\Hxxx{\nabla_xu}^2\|h\|^2_{L^\infty_xL^2_v}\\[5pt]
	\leq&C(T)\varepsilon+C(T)\E(t).
	\end{aligned}
	\end{equation*}
	This completes the proof.
\end{proof}

\section{Well-posedness and regularity of limiting system}

In this subsection, we present some results of well-posedness and regularity for the limiting system, which are repeatedly used in the proof above.

\begin{proposition}(\cite[Theorem 1.2]{Paicu2013})\label{prop:limit}
	Given the initial data $(\rho^{in}_0, u^{in}_0)$ satisfying
	\begin{equation*}
	0 \leq \rho^{in}_0(x) \leq \overline{\rho} < +\infty,\quad u^{in}_0 \in H^1_x(\mathbb{R}^3),
	\end{equation*}
	there exists a constant $\eta_0 > 0$ depending only on $\overline{\rho}$ such that if
	\begin{equation*}
	\|u^{in}_0\|_{L^2_x}\|\n_x u^{in}_0\|_{L^2_x} \leq \eta_0,
	\end{equation*}
	the system \eqref{limit} admits a unique global solution $(\rho_0, u_0)$ satisfying, for $(t, x) \in [0, +\infty) \times \mathbb{R}^3$
    \[
    0 \leq \rho_0(t, x) \leq \overline{\rho},
    \]
    and for $ t \in [0, +\infty)$,
	\begin{equation*}
	\begin{aligned}
	&B_0(t) \leq \|\sqrt{\rho^{in}_0+2}u^{in}_0\|_{L^2_x}, \qquad B_1(t) \leq 2\|\n_xu_0\|_{L^2_x},\\[5pt]
	&	B_2(t) \leq C\bigl(1 + \|\nabla u^{in}_0\|_{L^2_x}^4\bigr) \|\nabla u^{in}_0\|_{L^2_x}^2 \exp\bigl\{C\bigl(\|u_0^{in}\|_{L^2_x}^2 + \|\nabla u^{in}_0\|_{L^2_x}^2\bigr)\bigr\},
	\end{aligned}
	\end{equation*}
	where $ C$ is a constant depending on $\overline{\rho}$, and $B_0(t),\ B_1(t),\ B_2(t)$ are defined
	by
	\begin{align*}
	B_0(T) &:= \sup_{t \in [0,T]} \int_{\mathbb{R}^3} (\rho_0+2)|u_0(t, x)|^2 dx + 2 \int_0^T \int_{\mathbb{R}^3} |\n_x u_0|^2 \d x \d t,\\
	B_1(T) &:= \sup_{t \in [0,T]} \int_{\mathbb{R}^3} |\n _xu_0(t, x)|^2 dx + 2 \int_0^T \int_{\mathbb{R}^3} \big[(\rho_0+2)|\p_t u_0|^2 + |\n^2_x u_0|^2\big] \d x \d t,\\
   B_2(T)&:= \sup_{t\in[0,T]} \int \min\{1,t\}\left( (\rho_0+2)|\pt u_{0}(t,x)|^2 + |\nabla^2_x u_0(t,x)|^2 +|\nabla_x P_0|^2 \right)\d x \\
   &\quad+ \int_{0}^{T} \int \min\{1,t\}|\nabla_x \pt u_{0}|^2 \d x \d t.
	\end{align*}
\end{proposition}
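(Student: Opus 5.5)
The plan is to follow the strategy of \cite{Paicu2013}, working throughout with the total density $\tilde\rho_0=\rho_0+2$. Transport by a divergence-free field preserves pointwise bounds, so $2\leq\tilde\rho_0\leq\overline{\rho}+2$ for all time. This strict positivity is what lets us divide by $\tilde\rho_0$ in the elliptic estimates below, and it is also what yields $0\leq\rho_0\leq\overline{\rho}$. I would derive all bounds as \emph{a priori} estimates for smooth solutions (mollified data, with $\tilde\rho_0$ bounded away from zero), then pass to the limit by compactness: Aubin--Lions for $u_0$ and DiPerna--Lions renormalization for the transport equation.

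\textbf{Step 1 ($B_0$).} Test the momentum equation with $u_0$ and use the mass equation and $\div_x u_0=0$. This gives the energy equality
\[
\tfrac12\tfrac{d}{dt}\|\sqrt{\tilde\rho_0}u_0\|_{L^2_x}^2+\|\nabla_x u_0\|_{L^2_x}^2=0 ,
\]
hence the bound on $B_0$.

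\textbf{Step 2 ($B_1$), the key step.} Test with $\partial_t u_0$:
\[
\tfrac12\tfrac{d}{dt}\|\nabla_x u_0\|_{L^2_x}^2+\|\sqrt{\tilde\rho_0}\partial_t u_0\|_{L^2_x}^2=-\int\tilde\rho_0 u_0\cdot\nabla_x u_0\cdot\partial_t u_0 .
\]
For the convection term, write the system as a Stokes problem $-\Delta_x u_0+\nabla_x P_0=-\tilde\rho_0(\partial_t u_0+u_0\cdot\nabla_x u_0)$. Stokes regularity gives
\[
\|\nabla_x^2u_0\|_{L^2_x}+\|\nabla_x P_0\|_{L^2_x}\lesssim \|\sqrt{\tilde\rho_0}\partial_t u_0\|_{L^2_x}+\|u_0\cdot\nabla_x u_0\|_{L^2_x},
\]
and Gagliardo--Nirenberg gives
\[
\|u_0\cdot\nabla_x u_0\|_{L^2_x}\lesssim\|u_0\|_{L^6_x}\|\nabla_x u_0\|_{L^3_x}\lesssim\|\nabla_x u_0\|_{L^2_x}^{3/2}\|\nabla_x^2u_0\|_{L^2_x}^{1/2}.
\]
Combining these, the right-hand side is bounded by $\tfrac12\|\sqrt{\tilde\rho_0}\partial_tu_0\|_{L^2_x}^2+C\|\nabla_x u_0\|_{L^2_x}^6$. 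To make the $\|\nabla_x u_0\|_{L^2_x}^6$ term absorbable, I would work with the scale-invariant quantity $\|u_0\|_{L^2_x}\|\nabla_x u_0\|_{L^2_x}$, or equivalently interpolate using Step 1, and write
\[
\|\nabla_x u_0\|_{L^2_x}^6\lesssim\big(\|u_0\|_{L^2_x}\|\nabla_x u_0\|_{L^2_x}\big)\,\|\nabla_x u_0\|_{L^2_x}^2\|\nabla_x^2u_0\|_{L^2_x}^2\cdots .
\]
A continuity argument then shows that smallness $\leq\eta_0$ of $\|u_0^{in}\|_{L^2_x}\|\nabla_x u_0^{in}\|_{L^2_x}$ propagates. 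Once it does, the nonlinear term is absorbed into the dissipation and the bound $B_1(t)\leq 2\|\nabla_x u_0^{in}\|_{L^2_x}^2$ follows. I expect this bootstrap to be the main obstacle, because the exponent bookkeeping must exactly match the $3$D scaling.

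\textbf{Step 3 ($B_2$).} Differentiate the momentum equation in time and use $\partial_t\tilde\rho_0=-\div_x(\tilde\rho_0 u_0)$. Then test with $\min\{1,t\}\,\partial_tu_0$. The resulting terms, such as $\int\tilde\rho_0 u_0\cdot\nabla_x(|\partial_t u_0|^2)$ and $\int\tilde\rho_0\,\partial_tu_0\cdot\nabla_x u_0\cdot\partial_t u_0$, are handled with Hölder, Sobolev, and the Stokes estimate from Step 2. This gives a Gronwall inequality whose coefficients are integrable by $B_0$ and $B_1$, which produces the stated exponential bound on $B_2$. The weight $\min\{1,t\}$ avoids needing compatibility of $\partial_tu_0$ at $t=0$, and the $\|\nabla_x P_0\|_{L^2_x}$ part of $B_2$ again follows from Stokes regularity.

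\textbf{Uniqueness.} With $\nabla_x u_0\in L^1_tL^\infty_x$, which follows from the time-weighted bounds, interpolation, and $\tilde\rho_0$ bounded below, I would compare two solutions in Lagrangian coordinates as in Danchin--Mucha and Paicu--Zhang--Zhang. This avoids any regularity requirement on the density, and a Gronwall estimate on the velocity difference then closes the argument.
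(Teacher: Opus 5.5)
The paper does not prove this proposition; it quotes it from \cite[Theorem 1.2]{Paicu2013}. Your outline follows the same scheme as that source: energy identity, the $\partial_t u_0$-test closed by Stokes regularity and smallness of the scale-invariant product $\|u_0\|_{L^2_x}\|\nabla_x u_0\|_{L^2_x}$, a $\min\{1,t\}$-weighted estimate for $\partial_t u_0$, and uniqueness in Lagrangian coordinates. So the route is the same. One step fails as written, however, and some stated conclusions need adjusting.

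The interpolation displayed in Step 2 cannot hold as written. Under $u(x)\mapsto\lambda u(\lambda x)$ its left side scales like $\lambda^{3}$ and its right side like $\lambda^{4}$. The fix has two parts. First, keep $\|\nabla_x^2u_0\|_{L^2_x}^2$ in the dissipation by adding a small multiple of your Stokes bound; after Young this bound reads $\|\nabla_x^2u_0\|_{L^2_x}\lesssim\|\sqrt{\tilde\rho_0}\,\partial_tu_0\|_{L^2_x}+\|\nabla_xu_0\|_{L^2_x}^3$. Second, use $\|\nabla_xu_0\|_{L^2_x}^2\leq\|u_0\|_{L^2_x}\|\nabla_x^2u_0\|_{L^2_x}$, which gives
\[
\|\nabla_x u_0\|_{L^2_x}^6\leq\bigl(\|u_0\|_{L^2_x}\|\nabla_x u_0\|_{L^2_x}\bigr)^2\,\|\nabla_x^2u_0\|_{L^2_x}^2 .
\]
Hence, as long as $\|u_0\|_{L^2_x}\|\nabla_xu_0\|_{L^2_x}\leq\kappa(\overline\rho)$, the nonlinear term is absorbed and $\|\nabla_xu_0(t)\|_{L^2_x}$ is nonincreasing. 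Step 1 gives $\|u_0(t)\|_{L^2_x}\leq\sqrt{(\overline\rho+2)/2}\,\|u_0^{in}\|_{L^2_x}$. So the product stays below $\sqrt{(\overline\rho+2)/2}\,\eta_0<\kappa$ once $\eta_0$ is small in terms of $\overline\rho$ only, and the continuity argument closes. This is the entire mechanism behind the $3$D smallness condition, so it should be written out rather than left as ``$\cdots$''.

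Your Steps 1--2 actually give $B_0(t)\leq 2\|\sqrt{\rho_0^{in}+2}\,u_0^{in}\|_{L^2_x}^2$, because the supremum and the dissipation integral each contribute the initial energy. They also give $B_1(t)\leq C(\overline\rho)\|\nabla_xu_0^{in}\|_{L^2_x}^2$. You cannot claim the constant $2$ in $B_1$: already for $\rho_0\equiv 0$ and small data, where the flow is close to $\partial_t u=\frac12\Delta_x u$, one gets $B_1(t)\approx 4\|\nabla_xu_0^{in}\|_{L^2_x}^2$ for large $t$.

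The unsquared bounds in the printed statement are transcription slips. They are not invariant under the scaling $u\mapsto\lambda u(\lambda^2 t,\lambda x)$, which preserves the smallness condition. The squared, $\overline\rho$-dependent versions are what is true, and they are all that Corollary~\ref{coro:limit} uses.

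In the uniqueness step, the Lagrangian difference estimate is an absorption on short time intervals, not a bare Gronwall argument. Let $\delta v$ be the difference of the Lagrangian velocities and $\delta A$ the difference of the inverse Jacobians. Terms like $\delta A\,\nabla u_0$, with $\|\delta A(t)\|_{L^2_x}\lesssim\sqrt t\,\|\nabla\delta v\|_{L^2_tL^2_x}$, need $\sqrt t\,\nabla_x u_0\in L^2_tL^\infty_x$ in addition to $\nabla_xu_0\in L^1_tL^\infty_x$. This also follows from your weighted bounds.
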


\begin{corollary}\label{coro:limit}
	Under the conditions of Proposition \ref{prop:limit}, assume that the initial condition $(\rho^{in}_0, u^{in}_0)$  of \eqref{limit} satisfies
	\begin{equation*}
	(\rho^{in}_0, u^{in}_0)\in H^6_x(\mathbb{R}^3),
	\end{equation*}
	then, the solution $(\rho_0,u_0)$ to the system $\eqref{limit}$ satisfies
\begin{equation*}
\partial^k_t \rho_0, \quad \partial^k_t u_0 \in  C\big([0,T];H^{6-2k}_x(\mathbb{R}^3)\big)
\end{equation*}
for \(k=0,1,2\) and all \(T>0\).
\end{corollary}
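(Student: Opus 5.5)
The plan is to propagate higher regularity from the global solution of Proposition \ref{prop:limit} by a bootstrap on Sobolev energy estimates for \eqref{limit}, treating $\tilde\rho_0=\rho_0+2$ as a transported coefficient that is bounded above and below. Since $2\le\tilde\rho_0\le\overline{\rho}+2$ is preserved by transport, the momentum equation can be written as
\[
\partial_t u_0+u_0\cdot\nabla_x u_0+\tilde\rho_0^{-1}\big(\nabla_x P_0-\Delta_x u_0\big)=0 ,
\]
a parabolic--Stokes system with variable but uniformly elliptic weight. The bounds $B_0,B_1,B_2$ give $u_0\in L^\infty_tH^1_x\cap L^2_tH^2_x$ and, away from $t=0$, also $\partial_tu_0\in L^\infty_tL^2_x$ and $\nabla_x\partial_tu_0\in L^2_tL^2_x$ on every $[0,T]$. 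These bounds will serve as the base of the induction.

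\textbf{Step 1.} Obtain $\nabla_x u_0\in L^1(0,T;L^\infty_x)$. From the Stokes estimate $\|\nabla^2_xu_0\|_{L^p}+\|\nabla_xP_0\|_{L^p}\lesssim \|\tilde\rho_0(\partial_tu_0+u_0\cdot\nabla_xu_0)\|_{L^p}$ with $p=6$, together with the $B_2$ control $\nabla_x\partial_tu_0\in L^2_tL^2_x$ (which gives $\partial_tu_0\in L^2_tL^6_x$), one gets $\nabla_x u_0\in L^2_tW^{1,6}_x\subset L^2_tL^\infty_x$. The factor $\min\{1,t\}$ near $t=0$ is removed by using the initial regularity $u_0^{in}\in H^6$ together with a short-time local $H^s$ solution that coincides with the global one.

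\textbf{Step 2.} Run the higher estimates. For $|\alpha|\le 6$, apply $\partial_x^\alpha$ to the transport equation and use commutator estimates; this gives
\[
\frac{\mathrm d}{\mathrm dt}\|\rho_0\|_{H^6}\lesssim \|\nabla_xu_0\|_{L^\infty}\|\rho_0\|_{H^6}+\|\nabla_x\rho_0\|_{L^\infty}\|u_0\|_{H^6}.
\]
For the velocity, test $\partial_x^\alpha$ of the momentum equation, written in the $\tilde\rho_0$-weighted form, against $\partial_x^\alpha u_0$ for $|\alpha|\le 5$, and against $\partial_x^\alpha\partial_tu_0$ where needed. This yields
\[
\frac{\mathrm d}{\mathrm dt}\|\sqrt{\tilde\rho_0}\,\partial_x^\alpha u_0\|_{L^2}^2+\|\nabla_x\partial_x^\alpha u_0\|_{L^2}^2\lesssim \Phi(t)\big(1+\|\rho_0\|_{H^6}^2+\|u_0\|_{H^6}^2\big).
\]
Here $\Phi$ involves $\|(\nabla_xu_0,\nabla_x\rho_0,\partial_tu_0)\|_{L^\infty}$ and is integrable by Step 1 and by a lower-order bootstrap. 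The pressure is recovered from
\[
\operatorname{div}_x\big(\tilde\rho_0^{-1}\nabla_x P_0\big)=\operatorname{div}_x\big(\tilde\rho_0^{-1}\Delta_x u_0-u_0\cdot\nabla_x u_0\big)
\]
by elliptic regularity. The Gronwall inequality then closes the bounds in $L^\infty(0,T;H^6)$ for $\rho_0$ and $u_0$.

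\textbf{Step 3.} Get the time derivatives by reading them off the equations: $\partial_t\rho_0=-u_0\cdot\nabla_x\rho_0\in H^5\subset H^4$. For $\partial_tu_0$, the Stokes form costs two derivatives, so $\partial_tu_0\in H^4$. Differentiating once more in time gives $\partial_t^2\rho_0,\partial_t^2u_0\in H^2$. Continuity in time follows from the standard Lions--Magenes argument and the time-reversibility of transport.

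\textbf{Main obstacle.} The top-order term of the density is the hardest part. The transport equation gains no regularity, so $\|\rho_0\|_{H^6}$ requires $u_0\in L^1_tH^6$, which the parabolic part only provides if $\nabla_x\tilde\rho_0^{-1}$ is controlled at the matching order. I would close this by estimating $u_0$ in $L^2_tH^7$ through the maximal regularity of the variable-coefficient Stokes operator, with a commutator loss bounded by $\|\rho_0\|_{H^6}\|u_0\|_{W^{2,\infty}}$, and then apply Gronwall to the combined quantity $\|\rho_0\|_{H^6}^2+\|u_0\|_{H^6}^2+\int_0^t\|u_0\|_{H^7}^2$.
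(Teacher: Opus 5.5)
Your proposal is correct and follows essentially the same route as the paper, whose proof is only a sketch: a bootstrap from the global bounds of Proposition \ref{prop:limit} that alternates propagation of regularity for the transported density with parabolic/Stokes regularity for the momentum equation, after which the time derivatives are read off the equations. Your version supplies details the paper leaves implicit, namely the $L^1_tL^\infty_x$ bound on $\nabla_x u_0$ from $B_2$ and the $L^6$ Stokes estimate, and commutator estimates in place of characteristics. Your ``main obstacle'' is milder than you expect: in the form $\tilde\rho_0\partial_t u_0-\Delta_x u_0+\nabla_x P_0=-\tilde\rho_0u_0\cdot\nabla_x u_0$ the Stokes operator has constant coefficients, so the $L^2_tH^7_x$ bound needs only $\rho_0\in H^5_x$ together with $\partial_t u_0\in L^2_tH^5_x$, and there is no derivative loss.
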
	
\begin{proof}
	 Given the global strong solution established in Proposition \ref{prop:limit}, the higher-regularity estimates follow from a standard bootstrap argument. Specifically, these bounds are obtained by alternately applying the method of characteristics to the transport equation and the classical parabolic regularity theory to the momentum equation (see \cite{CK2003}). Here, $T$ can be any arbitrary time.
\end{proof}
%===========================================================
	\subsection*{Statements and Declarations.}
	The authors declare that they have no conflict of interest. Data sharing is not applicable to this article. Generative AI was used only for English-language editing. All scientific content, analysis, results, and conclusions were produced and verified by the authors.
	
	\section*{Acknowledgments}
	The work of H.~Wen is supported by the National Natural Science Foundation of China 12471209, and by the Guangzhou Basic and Applied Research Projects SL2024A04J01206. K.~Qi would like to thank the support from AMS-Simons travel award and startup fund from Michigan State University.
	
	%-------------------------------

\end{document}